\makeatletter\@addtoreset{equation}{section}\makeatother
\newcommand{\e}{\varepsilon}
\renewcommand{\phi}{\varphi}
\newcommand{\ssup}[1] {{\scriptscriptstyle{({#1}})}}
\newcommand{\one}{{\mathbf 1}}
\newcommand{\R}{\mathbb R}
\newcommand{\N}{\mathbb N}
\newcommand{\E}{\mathbb E}
\renewcommand{\P}{\mathbb P}
\newtheorem{theorem}{Theorem}[section]
\newtheorem{lemma}[theorem]{Lemma}
\newtheorem{prop}[theorem]{Proposition}
\def\1{{\mathchoice {1\mskip-4mu\mathrm l}      
{1\mskip-4mu\mathrm l}
{1\mskip-4.5mu\mathrm l} {1\mskip-5mu\mathrm l}}}
\renewcommand{\subsection}{\secdef \subsct\sbsect}
\newcommand{\subsct}[2][default]{\refstepcounter{subsection}
\vspace{0.15cm}
{\flushleft\bf \arabic{section}.\arabic{subsection}~\bf #1  }
\nopagebreak\nopagebreak}
\newcommand{\sbsect}[1]{\vspace{0.1cm}\noindent
{\bf #1}\vspace{0.1cm}}
\newcounter{remnr}
\newenvironment{remark}{\refstepcounter{remnr}
{\sf Remark~\arabic{remnr}.\ }\nopagebreak  }%
{\nopagebreak 
\medskip}
\renewcommand{\phi}{\varphi}
\renewcommand{\P}{\mathbb{P}}
\renewcommand{\E}{\mathbb{E}}
\begin{document}

\title{Time-dependent balls and bins model with positive feedback}

\author[Nadia Sidorova]{}

\maketitle

\centerline{\sc Nadia Sidorova\footnote{Department of Mathematics, University College London, Gower Street, London WC1 E6BT, UK, {\tt n.sidorova@ucl.ac.uk}.
} }

\vspace{0.4cm}


%
\vspace{0.4cm}

\begin{quote}
{\small {\bf Abstract:} 
Balls and bins models are classical probabilistic models where balls are added to bins at random according to a certain rule.
The balls and bins model with feedback is a non-linear generalisation of the P\'olya urn, where the probability of a new ball choosing a bin with $m$ balls is proportional to $m^{\alpha}$, with $\alpha$ being the feedback parameter. It is known that if the feedback is positive (i.e.\ $\alpha>1$) then the model is monopolistic: there is a finite time after which one of the bins will receive all incoming balls. We consider a time-dependent version of this model, where 
$\sigma_n$ independent balls are added at time $n$ instead of just one. 
We show that if $\alpha>1$ then one of the bins gets all but a negligible number of balls, and identify a phase transition in the growth of $(\sigma_n)$ between the monopolistic and non-monopolistic behaviour. 
We also describe the critical regime, where the probability of monopoly is strictly between zero and one. Finally, we show that in the feedback-less case $\alpha=1$ no dominance occurs, that is, each bin gets a non-negligible proportion of balls eventually. This is in sharp contrast with a similar model where 
new balls added at time $n$ are all placed in the same bin 
rather than independently.

}
\end{quote}
\vspace{5ex}

{\small {\bf AMS Subject Classification:} Primary 60C05.
Secondary 68R05, 90B80, 60K35.

{\bf Keywords:} Balls and bins, P\' olya urn, non-linear P\'olya urn, urn models, reinforcement, preferential attachment,  
feedback, monopoly, dominance.}
\vspace{4ex}



\bigskip

\section{Introduction}

\emph{Balls and bins models} are classical probabilistic models, which find numerous applications in Economics and Computer Science. They involve a number of bins, to which balls are added according to a certain rule. A simple example of a balls and bins model is equivalent to the P\'olya urn process introduced by Eggenberger and P\'olya in 1923, \cite{EP}. 
The model has two bins to which balls are added one by one, with the probability of a ball landing in a particular bin being proportional to the number of balls already in the bin. 
This is a prime example of a \emph{reinforced} random process, 
\cite{Ps}.
\smallskip

The balls and bins model \emph{with feedback}, or a \emph{non-linear P\'olya urn}, is a non-linear generalisation of the 
P\'olya-Eggenberger model, where the probability of a new ball choosing a bin with $m$ balls is proportional to $f(m)$, with some feedback function $f$. A commonly studied scenario is when there are two bins and $f(m)=m^{\alpha}$, with some positive exponent $\alpha$. 
The case $\alpha=1$ corresponds to the original P\'olya-Eggenberger model, while the cases $\alpha>1$ and $\alpha\in (0,1)$ are referred to as 
\emph{positive} and~\emph{negative} feedback, respectively. 
In the model with positive feedback the bin with a larger number of balls keeps getting more balls, reflecting the principle ``the strong gets stronger'', which leads to its eventual dominance. If the feedback is negative ``the strong gets weaker'', which pushes the bins towards an equilibrium.   
\smallskip

The balls and bins model with feedback 
was introduced in~\cite{DFM} and was motivated by 
the problems of economic competition. As noted in~\cite{SV}, the old industrial economies were more prone to a negative feedback, while 
the market dynamics of new information economy is typically governed by positive feedback. 
This is due to its network structure and to the fundamental economic effect: it is better to be connected to a bigger network rather than to a smaller one; or, in other words, the more popular a company is the more likely it is to get a new customer. 
This basic mechanism of the network evolution is called \emph{preferential attachment}, \cite{BA}, and the balls and bins process with feedback is a natural example of a preferential attachment process. 
\smallskip

It is well-known that if there are two bins and $\alpha=1$ then the proportion of the balls in each bin in the long run converges to a beta-distributed random variable (in particular, to a uniform distribution if both bins initially contain the same number of balls).  
\smallskip

%
%
%
%

For the positive feedback scenario $\alpha>1$, 
it was shown in~\cite{DFM} that 
the proportion of balls in each bin converges almost surely to a $\{0,1\}$-valued random variable, 
an event that we call \emph{dominance}. 
A much stronger result was obtained in~\cite{KK}, where an equivalent model was considered in the context of neuron growth.
It was proved that almost surely 
one of the  bins gets all but finitely many balls, an event that we call
\emph{monopoly}. The onset of monopoly was characterised in more detail in~\cite{O2}. A scaling limit for the probability of a certain bin to win, given that the initial number of balls is large, was described in~\cite{MOS}. A generalised model with a growing number of bins
was studied in~\cite{CCL}. 
\smallskip

If the feedback is negative the system tends to converge towards an equal number of balls in each bin, 
and neither monopoly nor dominance occurs, see~\cite{DFM, KK, O}.
\smallskip

We consider a \emph{time-dependent} balls and bins 
model with feedback, where the number of balls added at time $n$
is no longer one but is a function of $n$. 
The balls added at time $n$ choose between the bins independently, 
and for each ball the probability to land in a bin with $m$ balls is proportional to $m^{\alpha}$ --- just like in the original model. 
We assume that there are two bins, and that the feedback is either positive ($\alpha>1$) or non-existent ($\alpha=1$).
The aim of the paper is to analyse dominance and monopoly in that time-inhomogeneous scenario. This requires a new approach as methods used in~\cite{CCL, KK} rely on Rubin's construction, which fails if the number of added balls is not equal to one.
\smallskip

Let $(\sigma_n)$ 
be a positive sequence representing the number
of added balls at times $n\in\N$. 
Denote by $\tau_0$ the initial number of balls and, for each $n$, let
\begin{align*}
\tau_n=\tau_0+\sum_{i=1}^n \sigma_i
\end{align*}
be the total number of balls at time $n$.
\smallskip

Denote by $0<T_0<\tau_0$ the initial deterministic number of the balls in the first bin. Given that the bin contains $T_{n}$ 
balls at time $n$, we denote by 
\begin{align}
\label{def1th}
\Theta_n=\frac{T_n}{\tau_n}
\end{align}
the proportion of balls in the first bin and 
define
\begin{align}
\label{def1t}
T_{n+1}=T_{n}+B_{n+1},
\end{align}
where $B_{n+1}$ is a Binomial random variable with size $\sigma_{n+1}$ and parameter 
\begin{align}
\label{defp5}
P_n=\frac{\Theta_n^{\alpha}}{\Theta_n^{\alpha}+(1-\Theta_n)^{\alpha}},
\end{align}
otherwise independent of $\mathcal{F}_n=\sigma(B_1,\dots,B_n)$. 
Denote the corresponding probability and expectation by 
$\P$ and $\E$, and the conditional probabilities and expectations by $\P_{\mathcal{F}_n}$ and $\E_{\mathcal{F}_n}$, respectively. 
\smallskip

We denote by
\begin{align*}
\mathcal{D}=\big\{\exists\lim_{n\to\infty}\Theta_n\in \{0,1\}\big\}
\end{align*}
the event that eventually the number of balls in one of the bins is negligible, and call this event \emph{dominance}. Further, we denote by
\begin{align*}
\mathcal{M}=\big\{B_n=0\text{ eventually for all }n\big\}
\cup
\big\{B_n=\sigma_n\text{ eventually for all }n\big\}
\end{align*}
the event that eventually all balls are added to one of the bins, and call this event \emph{monopoly}. It is easy to see that 
\begin{align*}
\mathcal{M}\subset \mathcal{D}.
\end{align*}

\bigskip

Our first result is about the no feedback scenario. 

\begin{theorem}  
\label{polya}
Suppose $\alpha=1$. Then 
$\Theta_n$ converges almost surely to a random variable $\Theta$, 
and $\P(\mathcal{D})=0$. 
\end{theorem}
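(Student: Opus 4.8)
The plan is to prove Theorem~\ref{polya} in two steps: first the almost sure convergence of $\Theta_n$, and then the absence of atoms of the limit at $0$ and $1$.

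Since $\alpha=1$ forces $P_n=\Theta_n$, we get $\E_{\mathcal F_n}[T_{n+1}]=T_n+\sigma_{n+1}\Theta_n=(\tau_n+\sigma_{n+1})\Theta_n=\tau_{n+1}\Theta_n$, hence $\E_{\mathcal F_n}[\Theta_{n+1}]=\Theta_n$. Thus $(\Theta_n)$ is a bounded martingale and converges almost surely (and in every $L^p$) to some $\Theta\in[0,1]$. Because $\Theta_n$ always converges, $\mathcal D=\{\Theta\in\{0,1\}\}$, so it remains to show $\P(\Theta=0)=\P(\Theta=1)=0$; by interchanging the two bins (legitimate since $\tau_0-T_0>0$) it is enough to prove $\P(\Theta=0)=0$.

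Write $W_n=\tau_n-T_n$ for the count in the second bin (non-decreasing and $\ge\tau_0-T_0>0$), set $x^{(k)}=x(x+1)\cdots(x+k-1)$ for $k\in\N$, and put
\begin{align*}
S_k(n)=\frac{W_n^{(k)}}{\tau_n^{(k)}}=\prod_{j=0}^{k-1}\frac{W_n+j}{\tau_n+j}.
\end{align*}
This is the natural $k$-th moment analogue of the martingale $\Theta_n$: in the usual one-ball-at-a-time P\'olya urn $S_k(n)$ is an exact martingale (from $(x+1)^{(k)}=x^{(k)}(x+k)/x$). The crucial claim is that \emph{here $S_k(n)$ is a supermartingale}. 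Granting it, $\E[S_k(n)]\le S_k(0)=\prod_{j=0}^{k-1}(1-\frac{T_0}{\tau_0+j})$ for all $n$; and since $W_n\le\tau_n$ gives $\frac{W_n+j}{\tau_n+j}\ge\frac{W_n}{\tau_n}$, we have $(1-\Theta_n)^k=(W_n/\tau_n)^k\le S_k(n)$, so $\E[(1-\Theta_n)^k]\le\prod_{j=0}^{k-1}(1-\frac{T_0}{\tau_0+j})$ uniformly in $n$. Letting $n\to\infty$ (dominated convergence) and then $k\to\infty$ yields
\begin{align*}
\P(\Theta=0)=\lim_{k\to\infty}\E[(1-\Theta)^k]\le\lim_{k\to\infty}\prod_{j=0}^{k-1}\Bigl(1-\frac{T_0}{\tau_0+j}\Bigr)=0,
\end{align*}
the last product vanishing because $T_0>0$ makes $\sum_j\frac{T_0}{\tau_0+j}$ diverge. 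Hence $\P(\mathcal D)=0$.

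The main obstacle is the supermartingale property. Conditionally on $\mathcal F_n$ one has $W_{n+1}=W_n+(\sigma_{n+1}-B_{n+1})$, where $\sigma_{n+1}-B_{n+1}$ is binomial with parameters $\sigma_{n+1}$ and $1-\Theta_n=W_n/\tau_n$, so I must show $\E_{\mathcal F_n}[(W_n+\sigma_{n+1}-B_{n+1})^{(k)}]\le W_n^{(k)}\tau_{n+1}^{(k)}/\tau_n^{(k)}$. I would compare with the genuine P\'olya urn obtained by adding the $\sigma_{n+1}$ balls \emph{one at a time} from the current configuration $(T_n,W_n)$: if $\tilde B$ is the number of them landing in the second bin, then $\E_{\mathcal F_n}[(W_n+\tilde B)^{(k)}]=W_n^{(k)}\tau_{n+1}^{(k)}/\tau_n^{(k)}$ by the exact martingale property (the total $\tau_{n+1}=\tau_n+\sigma_{n+1}$ being the same in both models). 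It therefore suffices to see that $\sigma_{n+1}-B_{n+1}$ is dominated by $\tilde B$ in the convex order: both have mean $\sigma_{n+1}W_n/\tau_n$, and by de Finetti $\tilde B$ is $\mathrm{Bin}(\sigma_{n+1},P)$ with $P$ Beta-distributed of mean $W_n/\tau_n$, i.e.\ a mean-preserving randomisation of the success parameter of $\sigma_{n+1}-B_{n+1}$; conditioning on $P$ and applying Jensen, via the convexity in $p$ of $p\mapsto\E[(W_n+\mathrm{Bin}(\sigma_{n+1},p))^{(k)}]$ and the discrete convexity of the rising factorial $\psi=(W_n+\cdot)^{(k)}$ on $\{0,1,2,\dots\}$, gives exactly $\E_{\mathcal F_n}[(W_n+\sigma_{n+1}-B_{n+1})^{(k)}]\le\E_{\mathcal F_n}[(W_n+\tilde B)^{(k)}]$. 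The single point needing care is this convexity in $p$, which follows from the identity $\frac{d^2}{dp^2}\E[\psi(\mathrm{Bin}(m,p))]=m(m-1)\,\E[\Delta^2\psi(\mathrm{Bin}(m-2,p))]$ (second forward difference) together with $\Delta^2\psi\ge0$.
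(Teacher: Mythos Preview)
Your proof is correct and takes a genuinely different route from the paper's. The paper works with Laplace transforms: it shows that $f(\lambda)=\E e^{-\lambda\Theta}$ tends to zero along $\lambda_m=c\tau_m$ by an inductive estimate on $f_n(\lambda)$ that ultimately reduces to $\E e^{-c(1-c)T_m}\to 0$, which in turn requires the separate no-monopoly result $T_m\to\infty$ (Lemma~\ref{l:lll}). Your argument instead exploits the classical P\'olya urn directly: the rising factorial moments $S_k(n)=W_n^{(k)}/\tau_n^{(k)}$ are exact martingales in the one-at-a-time urn, and you show they become \emph{supermartingales} in the batch model by a convex-order comparison. The key step---that $\mathrm{Bin}(\sigma_{n+1},W_n/\tau_n)$ is dominated in convex order by the Beta-binomial $\mathrm{Bin}(\sigma_{n+1},P)$ with $P\sim\mathrm{Beta}(W_n,T_n)$---is clean, and the convexity of $p\mapsto\E[\psi(\mathrm{Bin}(m,p))]$ via the second-difference identity together with $\Delta^2(W_n+\cdot)^{(k)}=k(k-1)(W_n+\cdot+2)^{(k-2)}\ge 0$ is fully justified.

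What each approach buys: your method is more self-contained (no appeal to the no-monopoly lemma), yields the explicit uniform moment bound $\E[(1-\Theta_n)^k]\le\prod_{j=0}^{k-1}\bigl(1-\tfrac{T_0}{\tau_0+j}\bigr)$, and makes transparent \emph{why} batching the balls only helps concentration (the i.i.d.\ batch is a de-randomisation of the exchangeable P\'olya batch). The paper's Laplace-transform approach, on the other hand, is closer in spirit to the analytic machinery used elsewhere in the article and does not rely on integer initial conditions or the P\'olya/Beta identification.
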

\smallskip

\begin{remark} Convergence of $\Theta_n$ follows from a simple martingale argument so the non-trivial part of this theorem is to show that almost surely there is no dominance. 
\end{remark}

\begin{remark}
A similar problem 
was considered in~\cite{P, S}, where all $\sigma_n$ new balls were added 
to the same bin in a bulk (rather than independently to both bins). 
This is equivalent to replacing the $(\Theta_{n-1},\sigma_n)$-binomial random variables $B_n$ by $\sigma_n I_n$, where $I_n$ are Bernoulli random variables with parameters $\Theta_{n-1}$, otherwise independent of the past.
It was shown in~\cite{P} that
$$\sum\limits_{n=0}^{\infty}\Big(\frac{\sigma_{n+1}}{\tau_n}\Big)^2=\infty
\qquad\Leftrightarrow\qquad
\P(\mathcal{D})=1.
$$ 
Further, it was proved in~\cite{S} that even if the above series converges, we have 
\begin{align*}
\sum_{n=1}^{\infty}\frac{1}{\tau_n}<\infty 
\qquad\Leftrightarrow\qquad
\P(\mathcal{D})>0,
\end{align*}
provided that $(\sigma_n)$ satisfies some regularity conditions.
In other words, there is a phase transition from dominance 
to no dominance 
depending on the growth rate of $(\sigma_n)$.
This is in a striking contrast with our result, where dominance does not occur for any $(\sigma_n)$. 
This effect can be heuristically explained by higher step-by-step fluctuations of the second model, i.e.\ $\text{var} (B_n)=\sigma_n\Theta_{n-1}(1-\Theta_{n-1})\ll \sigma_n^2\Theta_{n-1}(1-\Theta_{n-1})=\text{var} (\sigma_nI_n)$, leading to more extreme behaviour of $(\Theta_n)$ for faster-growing $(\sigma_n)$.  
\end{remark}

Now we turn our attention to the positive feedback scenario and assume that 
$\alpha>1$. 
Denote 
\begin{align*}
\rho_n=\frac{\sigma_{n+1}}{\tau_n},\quad n\in\N_0.
\end{align*}

We will often (but not always) impose the following regularity conditions on the sequence $(\sigma_n)$:
\begin{itemize}
\item[(S)] $(\sigma_n)$ is either bounded or tends to infinity;
\item[(R)] $(\rho_n)$ is either bounded or tends to infinity. 
\end{itemize}


\begin{theorem}  
\label{main0}
Suppose $\alpha>1$, and 
{\rm (S)} and {\rm (R)} are satisfied. Then $\P(\mathcal{D})=1$.
\end{theorem}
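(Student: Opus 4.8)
The plan is to show that $\Theta_n$ converges almost surely to a $\{0,1\}$-valued limit by exploiting the positive feedback, which forces $\Theta_n$ away from $1/2$. The starting observation is that since $\alpha>1$, the map $\Theta\mapsto P(\Theta)=\Theta^\alpha/(\Theta^\alpha+(1-\Theta)^\alpha)$ satisfies $P(\Theta)>\Theta$ for $\Theta\in(1/2,1)$ and $P(\Theta)<\Theta$ for $\Theta\in(0,1/2)$, with $1/2$ an unstable fixed point and $0,1$ stable. So the conditional drift of $\Theta_n$ is always directed toward the nearer of $0$ and $1$. I would first set up the drift computation: write $\E_{\mathcal{F}_n}[\Theta_{n+1}]$ in terms of $\Theta_n$ and $\rho_n$, using $\E_{\mathcal{F}_n}[B_{n+1}]=\sigma_{n+1}P_n$ and $T_{n+1}/\tau_{n+1}=(T_n+B_{n+1})/(\tau_n+\sigma_{n+1})$, to get that $\Theta_n$ is, on the relevant side of $1/2$, a bounded submartingale or supermartingale depending on which half it currently lies in.

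Next, I would exploit the martingale structure to get almost sure convergence. The cleanest route is probably to consider a suitable function such as $g(\Theta_n)$ with $g$ chosen so that $g(\Theta_n)$ is a genuine (super/sub)martingale globally — for instance something symmetric around $1/2$ capturing ``distance from $1/2$'' — or alternatively to argue on the events $\{\Theta_n>1/2 \text{ eventually}\}$ and $\{\Theta_n<1/2 \text{ eventually}\}$ and $\{\Theta_n \text{ crosses } 1/2 \text{ infinitely often}\}$ separately. On each of the first two events the bounded (sub/super)martingale $\Theta_n$ (resp.\ $1-\Theta_n$) converges; one then has to rule out that the limit lies in the open interval. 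The conditions (S) and (R) enter here: they ensure $\rho_n$ is eventually monotone-ish in behaviour so that one can control the size of the increments $\Theta_{n+1}-\Theta_n$ relative to the drift. If the limit were some $\theta^\*\in(0,1)\setminus\{1/2\}$, the drift $\E_{\mathcal{F}_n}[\Theta_{n+1}-\Theta_n]$ would be bounded below in absolute value by a positive multiple of $\rho_n/(1+\rho_n)$ (times $|P(\theta^\*)-\theta^\*|>0$), which must be summable on the convergence event — this forces $\sum \rho_n/(1+\rho_n)<\infty$, hence $\rho_n\to 0$, hence (by (R)) $\rho_n$ bounded and in fact $\sum\rho_n<\infty$; but then one shows the variance $\sum \text{var}_{\mathcal{F}_n}(\Theta_{n+1})$ is also summable, and a Borel–Cantelli / quadratic-variation argument shows $\Theta_n$ cannot actually settle at an unstable-for-the-dynamics interior point. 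The remaining case, crossing $1/2$ infinitely often, must be shown to have probability zero: here I would argue that once $\Theta_n$ is close to $0$ or $1$ the probability of ever returning near $1/2$ is small (a gambler's-ruin / optional-stopping estimate on the (sub)martingale), and since $\Theta_n$ does get close to the boundary along the convergent subsequences, infinitely many crossings is a null event.

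The main obstacle I anticipate is precisely the control of the ``boundary-repulsion'' in the time-inhomogeneous setting when $\rho_n$ is large (the case $\rho_n\to\infty$ permitted by (R)). When many balls are added at once, a single step can move $\Theta_n$ by an $O(1)$ amount, so the soft ODE-style reasoning (drift dominates noise) breaks down and one must instead use large-deviation control on the Binomial $B_{n+1}$: conditionally on $\Theta_n$ bounded away from $1/2$, $B_{n+1}/\sigma_{n+1}$ concentrates around $P_n$ which is strictly on the same side of $\Theta_n$, so $\Theta_{n+1}$ is pushed further toward the boundary with overwhelming probability, and a Borel–Cantelli argument then pins $\Theta_n$ to $\{0,1\}$. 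Reconciling the small-$\rho_n$ regime (martingale/variance arguments) with the large-$\rho_n$ regime (concentration arguments) under the single hypothesis (R), and handling the intermediate bounded-but-not-vanishing case, is the delicate part; conditions (S) and (R) are exactly what make a clean dichotomy possible, and I would structure the proof as: (i) drift lemma, (ii) a.s.\ convergence of $\Theta_n$ (perhaps of a transformed process) to \emph{some} limit in $[0,1]$, (iii) ruling out interior limits via a case split on the asymptotic size of $\rho_n$, using martingale convergence when $\rho_n$ is small/vanishing and Binomial concentration when $\rho_n$ is large.
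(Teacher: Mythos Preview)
Your plan has the right skeleton for part of the argument, but it misses the one genuinely hard step that the paper isolates and proves separately: \emph{showing that $\Theta_n$ does not get stuck at the equilibrium $1/2$}. Your drift analysis correctly rules out an interior limit $\theta^*\in(0,1)\setminus\{1/2\}$ (since $\sum\sigma_{n+1}/\tau_{n+1}=\infty$ unconditionally, the accumulated drift would diverge). But at $\theta^*=1/2$ the drift $\psi(\Theta_n)-\Theta_n$ vanishes to first order, so nothing in your scheme excludes $\Theta_n\to 1/2$ from one side, nor does it handle the related ``crosses $1/2$ infinitely often'' case. Your suggested gambler's-ruin step is circular: it assumes $\Theta_n$ visits a neighbourhood of $\{0,1\}$, which is exactly what needs proving. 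Likewise the proposed case split on the size of $\rho_n$ does not help here; concentration of $B_{n+1}/\sigma_{n+1}$ around $P_n$ is useless when $\Theta_n\approx 1/2$, because then $P_n\approx 1/2$ too.

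The paper deals with this by proving a quantitative escape-from-equilibrium result (Proposition~\ref{devp}): for a well-chosen sequence $\delta_n=1/\log^2\tau_n$ (which satisfies $\sum\delta_n\sigma_{n+1}/\tau_{n+1}<\infty$ under~(R), Lemma~\ref{1101m}), one has $|\Theta_n-1/2|>\delta_n$ infinitely often almost surely. The proof of this proposition is not soft; it linearises the dynamics near $1/2$, writes $\Theta_n-1/2$ as a deterministic growth factor $\pi_{m,n}$ times a noise term, and uses a CLT argument (this is where (S) enters) to show the noise is of nontrivial Gaussian order while $\pi_{m,n}\to\infty$. Only after this is in hand does the paper run the argument you sketched: pick a late stopping time $\eta$ with $|\Theta_\eta-1/2|>\delta_\eta$, control the future martingale fluctuations by Doob's inequality (total conditional variance $\le 1/\tau_\eta\ll\delta_\eta^2$), deduce that $\Theta_n$ stays on the same side of $1/2$ forever, and then use the drift and $\sum\sigma_n/\tau_n=\infty$ to force $\Theta_n\to 0$ or $1$. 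Your proposal is missing the escape step, and without it the argument cannot close.
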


This theorem means that a time-dependent balls and bins model with positive feedback always exhibits dominance regardless of the growth of $(\sigma_n)$. Monopoly is more delicate, and whether or not it occurs is determined by the growth parameter 
\begin{align*}
\theta=\lim_{n\to\infty}\alpha^{-n}\log\tau_n\in [0,\infty].
\end{align*}
In the sequel we assume that this limit exists. We will distinguish between three regimes: supercritical ($\theta=\infty$), subcritical ($\theta=0$), and critical ($0<\theta<\infty$). We will see that monopoly occurs with probability zero and one in the  supercritical and subcritical regimes, respectively. In the critical regime the probability of monopoly depends on the finer details of the growth of $(\tau_n)$
and can be strictly between zero and one.

\begin{theorem}[Supercritical regime]
\label{main2}
Suppose $\alpha>1$ and $\theta=\infty$. Then $\P(\mathcal{M})=0$.
\end{theorem}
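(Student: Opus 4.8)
The plan is to show that in the supercritical regime the balls essentially never fully commit to one bin, i.e.\ with probability one there are infinitely many times $n$ at which $0<B_n<\sigma_n$. The event $\mathcal{M}^c$ is exactly the event that $B_n\notin\{0,\sigma_n\}$ infinitely often, so it suffices to prove $\P(\mathcal{M})=0$ by a conditional Borel--Cantelli argument: I would show that $\sum_n \P_{\mathcal{F}_n}(0<B_{n+1}<\sigma_{n+1})=\infty$ almost surely on the event we are trying to rule out. On $\mathcal{M}$ the proportion $\Theta_n$ tends to $0$ or $1$; by symmetry assume $\Theta_n\to 1$, so $B_n=\sigma_n$ eventually. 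Then the number of balls in the second bin, $\tau_n-T_n$, is eventually constant, say equal to a fixed integer $k\ge 1$. The point is that even with a single ball in the losing bin, the supercritical growth of $\tau_n$ forces the next batch to drop a ball there with non-summable probability.

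Concretely, write $S_n=\tau_n-T_n$ for the number of balls in the second bin, so $1-\Theta_n=S_n/\tau_n$. The conditional probability that a given one of the $\sigma_{n+1}$ new balls lands in the second bin is $1-P_n = \frac{(1-\Theta_n)^{\alpha}}{\Theta_n^{\alpha}+(1-\Theta_n)^{\alpha}}\ge c\,(1-\Theta_n)^{\alpha} = c\,(S_n/\tau_n)^{\alpha}$ for a constant $c>0$ once $\Theta_n$ is close to $1$. Hence
\begin{align}
\label{eq:sc-bound}
\P_{\mathcal{F}_n}(0<B_{n+1}<\sigma_{n+1}) \;\ge\; \P_{\mathcal{F}_n}(B_{n+1}<\sigma_{n+1}) \;=\; 1-P_n^{\sigma_{n+1}} \;\ge\; 1-\bigl(1-c(S_n/\tau_n)^{\alpha}\bigr)^{\sigma_{n+1}}.
\end{align}
On the event $\{\Theta_n\to1\}$ we eventually have $S_n=k$ for some fixed $k$, so the right-hand side is $\ge 1-\exp\bigl(-c\,k^{\alpha}\,\sigma_{n+1}/\tau_n^{\alpha}\bigr)$. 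Thus I need $\sum_n \min\{1,\;\sigma_{n+1}/\tau_n^{\alpha}\}=\infty$. Since $\sigma_{n+1}=\tau_{n+1}-\tau_n$, the quantity $\sigma_{n+1}/\tau_n^{\alpha}$ is comparable (when $\rho_n$ stays bounded away from blowing up too fast; and when it does blow up the term is simply $\ge$ a constant) to $\tau_{n+1}/\tau_n^{\alpha}-\tau_n^{1-\alpha}$, and I would translate the hypothesis $\theta=\lim \alpha^{-n}\log\tau_n=\infty$ into the statement that $\alpha^{-n}\log\tau_n\to\infty$, which by a telescoping/Cesàro-type argument prevents $\log\tau_{n+1}-\alpha\log\tau_n = \log(\tau_{n+1}/\tau_n^{\alpha})$ from being summable to $-\infty$; more precisely it forces $\log(\tau_{n+1}/\tau_n^{\alpha})$ to be $\ge -o(\alpha^n)$ along a subsequence in a way that makes $\sum \tau_{n+1}/\tau_n^{\alpha}$ diverge. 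Combined with the conditional Borel--Cantelli lemma (Lévy's extension), this gives that $0<B_{n+1}<\sigma_{n+1}$ infinitely often a.s.\ on $\{\Theta_n\to 1\}$, hence $\P(\{\Theta_n\to1\}\cap\mathcal{M})=0$; by symmetry the same holds with $0$, and since $\mathcal{M}\subset\mathcal{D}$ and on $\mathcal{D}$ one of the two limits occurs, $\P(\mathcal{M})=0$.

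The main obstacle I anticipate is the passage from the abstract growth condition $\theta=\infty$ to the divergence of $\sum_n \sigma_{n+1}/\tau_n^{\alpha}$ (truncated at $1$): this is a genuine real-analysis lemma about sequences, and it is where regularity condition~(R) --- boundedness versus divergence of $\rho_n=\sigma_{n+1}/\tau_n$ --- actually gets used, to rule out pathological oscillations of $\tau_{n+1}/\tau_n$. If $\rho_n\to\infty$ then $\sigma_{n+1}\gg\tau_n$, so $\tau_{n+1}\sim\sigma_{n+1}$ and the truncated terms are bounded below by a constant, giving divergence trivially; if $\rho_n$ is bounded then $\tau_{n+1}\le C\tau_n$, so $\log\tau_{n+1}-\log\tau_n\le\log C$, and $\alpha^{-n}\log\tau_n\to\infty$ forces $\log(\tau_{n+1}/\tau_n^{\alpha})$ to be eventually of order $(\alpha-1)\alpha^{-1}\cdot\alpha^{n+1}\cdot(\text{something}\to\infty)>0$ on average, so in fact $\tau_{n+1}/\tau_n^{\alpha}\to\infty$ and the terms are eventually $\ge 1$ --- making the sum diverge even more easily. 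A secondary technical point is making~\eqref{eq:sc-bound} rigorous uniformly in $n$: one must first invoke Theorem~\ref{main0} to know $\P(\mathcal{D})=1$, restrict to the sub-event $\{\Theta_n\to1\}$, and only run the Borel--Cantelli argument from a (random) time onwards at which $\Theta_n$ is close enough to $1$ and $S_n$ has stabilised; standard conditional Borel--Cantelli arguments accommodate this via stopping times or by working on the events $\{S_n=k\text{ for all }n\ge N\}$ for fixed $N,k$ and taking a countable union.
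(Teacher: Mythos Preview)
Your Borel--Cantelli strategy is sound in outline and gives a genuine alternative to the paper's martingale argument, but as written it is both overcomplicated and, more seriously, imports hypotheses that Theorem~\ref{main2} does not assume.

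First, you do not need to restrict to $\{\Theta_n\to 1\}$, partition over $\{S_n=k\text{ for all }n\ge N\}$, or invoke Theorem~\ref{main0}. Since $S_n=\tau_n-T_n$ is non-decreasing with $S_0\ge 1$, the bound $1-P_n=\psi(1-\Theta_n)\ge(1-\Theta_n)^{\alpha}=(S_n/\tau_n)^{\alpha}\ge\tau_n^{-\alpha}$ holds \emph{everywhere} on the probability space, so
\[
\P_{\mathcal{F}_n}\bigl(B_{n+1}<\sigma_{n+1}\bigr)=1-P_n^{\,\sigma_{n+1}}\ge 1-\exp\bigl(-\sigma_{n+1}/\tau_n^{\alpha}\bigr)
\]
is a deterministic lower bound, and L\'evy's Borel--Cantelli applies directly. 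Invoking Theorem~\ref{main0} would be a genuine error, since that result requires (S) and (R) while Theorem~\ref{main2} does not. (Also note that your displayed inequality $\P_{\mathcal{F}_n}(0<B_{n+1}<\sigma_{n+1})\ge\P_{\mathcal{F}_n}(B_{n+1}<\sigma_{n+1})$ goes the wrong way; just use the right-hand probability, which is exactly what you need to rule out $\{B_n=\sigma_n\text{ eventually}\}$.)

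Second, your anxiety that the implication $\theta=\infty\Rightarrow\sum_n\sigma_{n+1}/\tau_n^{\alpha}=\infty$ needs condition (R) is unfounded, and your case analysis on $\rho_n$ is partly wrong (bounded $\rho_n$ forces $\theta=0$, so that branch is vacuous here). The paper's argument is a one-line contrapositive with no regularity assumption: if the series converges then eventually $\sigma_{n+1}<\tau_n^{\alpha}$, hence $\tau_{n+1}\le 2\tau_n^{\alpha}$, and iterating gives $\tau_n\le(2\tau_k)^{\alpha^{n-k}}$, forcing $\theta\le\alpha^{-k}\log(2\tau_k)<\infty$.

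For comparison, the paper does not use Borel--Cantelli for this step at all: having established $\sum_n\sigma_{n+1}/\tau_n^{\alpha}=\infty$, it shows $T_n\to\infty$ a.s.\ via the decomposition $T_n=T_0+M_n+Y_n$ with $Y_n=\sum_{i\le n}\sigma_iP_{i-1}$, observing that $Y_n\ge\sum_{i\le n}\sigma_i/\tau_{i-1}^{\alpha}\to\infty$ and that $\langle M\rangle_n\le Y_n$ then forces $T_n\to\infty$ whether or not $\langle M\rangle_\infty<\infty$. Once cleaned up as above, your approach is an equally valid and arguably more elementary route to the same conclusion.
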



\begin{theorem}[Subcritical regime]
\label{main1}
Suppose $\alpha>1$, $\theta=0$, and {\rm (S)} is satisfied. 
\smallskip

\begin{itemize}

\item If $(\rho_n)$ is bounded then  $\P(\mathcal{M})=1$.
\smallskip

\item If $\rho_n\to\infty$ then
\begin{align*}
\P(\mathcal{M})=
\left\{\begin{array}{ll} 1 & \text{ \rm if }\lambda<1,\\
0 & \text{ \rm if }\lambda>1,\end{array}\right.
\end{align*}
where 
\begin{align*}
\lambda=
\limsup_{n\to\infty}
\frac{\sigma_{n+1}\sigma_{n-1}^{\alpha}}{\sigma_n^{\alpha+1}}.
\end{align*}
In particular, if $\lambda$ is  a limit (finite or infinite) then $\lambda=0$ and $\P(\mathcal{M})=1$.
\end{itemize}
\end{theorem}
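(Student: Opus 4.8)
The plan is to reduce the statement, via Theorem~\ref{main0} and the symmetry of the two bins, to a single estimate on the event $\{\Theta_n\to1\}$. Since in both cases of the theorem $(\rho_n)$ is bounded or tends to infinity, condition {\rm (R)} holds and Theorem~\ref{main0} gives $\P(\mathcal D)=1$, so $\{\Theta_n\to0\}\cup\{\Theta_n\to1\}$ is of full measure. Write $\hat T_n=\tau_n-T_n$ for the number of balls in the second bin, $\hat\Theta_n=1-\Theta_n=\hat T_n/\tau_n$, $\hat B_{n+1}=\sigma_{n+1}-B_{n+1}$ and $\hat P_n=1-P_n$; then $\hat B_{n+1}$ is Binomial$(\sigma_{n+1},\hat P_n)$ given $\mathcal F_n$, and on $\{\Theta_n\to1\}$ monopoly is precisely $\{\hat B_n=0$ eventually$\}=\{\hat T_n$ is eventually constant$\}$, while on $\{\Theta_n\to0\}$ it is the mirror-image event. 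So it suffices to decide whether $\{\hat B_n=0$ eventually$\}$ has, conditionally on $\{\Theta_n\to1\}$, probability one (monopoly) or zero (no monopoly); the statement on $\{\Theta_n\to0\}$ is identical after interchanging the bins, and summing the two probabilities against $\P(\mathcal D)=1$ yields the claim. By L\'evy's conditional Borel--Cantelli lemma, $\{\hat B_{n+1}\ge1$ i.o.$\}$ coincides a.s.\ with $\{\sum_n\P_{\mathcal F_n}(\hat B_{n+1}\ge1)=\infty\}$; since $\P_{\mathcal F_n}(\hat B_{n+1}\ge1)=1-(1-\hat P_n)^{\sigma_{n+1}}$ is comparable to $\min(1,\sigma_{n+1}\hat P_n)$ and, on $\{\Theta_n\to1\}$, eventually $\Theta_n\ge\tfrac12$ so that $\hat P_n$ is comparable to $\hat\Theta_n^\alpha$, everything comes down to the a.s.\ finiteness on $\{\Theta_n\to1\}$ of
\[
\Sigma:=\sum_n\min\big(1,\ \sigma_{n+1}\hat\Theta_n^{\alpha}\big)=\sum_n\min\big(1,\ \rho_n\hat\Theta_n^{\alpha-1}\hat T_n\big).
\]

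For the two monopoly sub-cases I would first get an a priori bound on $\hat T_n$ by a supermartingale argument. On $\{\Theta_n\to1\}$, for every $\e>0$ there is an a.s.\ finite time $m$ after which $\hat\Theta_n\le\e$; for $n\ge m$, $\E_{\mathcal F_n}[\hat T_{n+1}]=\hat T_n+\sigma_{n+1}\hat P_n\le\hat T_n(1+c\e^{\alpha-1}\rho_n)$ with $c=c(\alpha)$, so $\hat T_n/\prod_{k=m}^{n-1}(1+c\e^{\alpha-1}\rho_k)$ is a nonnegative supermartingale and converges, giving $\hat T_n\le C\prod_{k=m}^{n-1}(1+c\e^{\alpha-1}\rho_k)$ a.s.\ with $C<\infty$ random. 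When $(\rho_n)$ is bounded, comparing $\sum_k\rho_k$ with $\sum_k\log(1+\rho_k)=\log(\tau_n/\tau_m)$ turns this into $\hat T_n\le C\tau_n^{\kappa}$ with $\kappa=\kappa(\e)\to0$ as $\e\to0$; choosing $\e$ so small that $\gamma:=\alpha-1-\alpha\kappa>0$ and substituting into $\Sigma$ gives $\Sigma\le C^\alpha\sum_n\rho_n\tau_n^{-\gamma}=C^\alpha\sum_n(\tau_{n+1}-\tau_n)\tau_n^{-1-\gamma}$, which, since $\rho_n$ is bounded, is at most a constant times $\int_{\tau_0}^\infty t^{-1-\gamma}\,dt<\infty$; hence $\Sigma<\infty$ and monopoly holds. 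When $\rho_n\to\infty$ the crude bound does not suffice on its own: here one uses that the feedback contracts $\hat\Theta_n$ very fast. Once $\hat\Theta_n$ is small and $\rho_n$ large, $\E_{\mathcal F_n}[\hat\Theta_{n+1}]$ is bounded by a multiple of $\hat\Theta_n^{\alpha}+\hat\Theta_n/\rho_n$, and in the regime where the first term dominates the conditional mean of $\hat B_{n+1}$ is $\gg1$, so $\hat B_{n+1}$ concentrates and $\hat\Theta_{n+1}$ is of order $\hat\Theta_n^{\alpha}$, whereas when the second term dominates the increments essentially vanish. The key auxiliary quantity is $q_n=\rho_n\hat\Theta_n^{\alpha-1}$, whose consecutive ratios are comparable to $\rho_{n+1}/\rho_n^{\alpha}$ and hence (using $\tau_n\sim\sigma_n$, valid since $\rho_{n-1}^{-1}=\tau_{n-1}/\sigma_n\to0$) to $\sigma_{n+1}\sigma_{n-1}^{\alpha}/\sigma_n^{\alpha+1}$. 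When $\lambda<1$ the process leaves the ``growth regime'' $\{q_n\ge1\}$ permanently, $\hat\Theta_n$ then plunges at a doubly-exponential rate, and combining this with $\theta=0$ (i.e.\ $\log\tau_n=o(\alpha^n)$) makes the terms of $\Sigma$ summable, so $\Sigma<\infty$ and monopoly holds.

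For the no-monopoly sub-case $\lambda>1$ I would instead show $\Sigma=\infty$ a.s.\ on $\{\Theta_n\to1\}$. Since $\hat T_n\ge\hat T_0=\tau_0-T_0>0$ for all $n$, we have $\hat\Theta_n\ge\tau_n^{-1}$, so $\sigma_{n+1}\hat\Theta_n^{\alpha}\ge\rho_n\tau_n^{1-\alpha}=:e^{-g_n}$, and it is enough to prove $\sum_n\min(1,e^{-g_n})=\infty$. A short computation gives $g_n-g_{n+1}=\log(\rho_{n+1}/\rho_n^{\alpha})+o(1)=\log\big(\sigma_{n+2}\sigma_n^{\alpha}/\sigma_{n+1}^{\alpha+1}\big)+o(1)$, so $\lambda>1$ means $g_{n+1}-g_n\le-\delta<0$ infinitely often; the core of this part is then a deterministic lemma showing that, under {\rm (S)}, $\theta=0$ and $\rho_n\to\infty$, this forces $\min(1,e^{-g_n})$ to be bounded below along a subsequence, whence $\sum_n\min(1,e^{-g_n})=\infty$, and L\'evy's lemma gives $\hat B_{n+1}\ge1$ i.o.\ a.s.\ on $\{\Theta_n\to1\}$. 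Finally, the ``in particular'' statement is purely deterministic: if $\log\big(\sigma_{n+1}\sigma_{n-1}^{\alpha}/\sigma_n^{\alpha+1}\big)$ converges then $u_n:=\log\sigma_n$ satisfies a second-order linear recursion with characteristic roots $1$ and $\alpha$, which forces $\alpha^{-n}u_n$ to converge; but $\theta=0$ rules out a nonzero $\alpha^n$-mode and $\rho_n\to\infty$ (i.e.\ $u_{n+1}-u_n\to\infty$) rules out the surviving possibilities except that the constant limit of the ratio is $0$, in which case $\lambda=0$ and, being then bounded, we land in the monopoly case.

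The step I expect to be the main obstacle is the no-monopoly direction: converting the $\limsup$ condition $\lambda>1$ into genuine divergence of $\sum_n\min(1,e^{-g_n})$ forces one to use the regularity hypotheses {\rm (S)}, $\theta=0$, $\rho_n\to\infty$ in a rather rigid way, and, more broadly, the entire $\rho_n\to\infty$ analysis is delicate because neither $\hat\Theta_n$ nor $\hat T_n$ is monotone and the binomial fluctuations of $\hat B_{n+1}$ must be controlled at every stage of the escape from, and subsequent plunge away from, the unstable balanced configuration.
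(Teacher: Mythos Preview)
Your reduction via L\'evy's conditional Borel--Cantelli to the a.s.\ behaviour of
\[
\Sigma=\sum_n\min\bigl(1,\sigma_{n+1}\hat\Theta_n^{\alpha}\bigr)
\]
on $\{\Theta_n\to1\}$ is a legitimate alternative framework, and your treatment of the bounded-$\rho$ case is a genuine and somewhat cleaner substitute for the paper's Proposition~\ref{p:mod1}: the paper instead compares $\sum_i(T_{i+1}-T_i)/T_i^{\alpha}$ with $\int_{T_n}^{\infty}x^{-\alpha}\,dx$ and closes the argument with a martingale liminf estimate (Lemma~\ref{l:liminf0}), whereas your supermartingale bound $\hat T_n\le C\tau_n^{\kappa}$ with $\kappa\downarrow0$ feeds directly into $\Sigma$. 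Both work; yours is shorter.

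The two remaining pieces, however, are not yet proofs.

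\emph{Case $\rho_n\to\infty$, $\lambda<1$.} This is where the real difficulty lies, and your sketch does not reach it. The heuristic ``$q_{n+1}/q_n$ is comparable to $\rho_{n+1}/\rho_n^{\alpha}$'' is only valid in the regime where $\hat\Theta_{n+1}\approx\hat\Theta_n^{\alpha}$, but you have not shown that one ever enters, let alone stays in, that regime; the binomial fluctuations can repeatedly push $\hat\Theta$ back up. The paper (Proposition~\ref{p:fast}) handles this with a four–step argument: it introduces stopping times $\kappa_n$ marking when the ``drift'' term $\sigma_{n+1}T_n^{\alpha}/\tau_n^{\alpha}$ is $\delta$-dominated by $T_n$, proves via a separate contradiction argument (using $\theta=0$) that these times are all finite, controls the noise after $\kappa_\nu$ uniformly, and then shows by induction that the domination is in fact geometric, $\sigma_{n+1}T_n^{\alpha-1}/\tau_n^{\alpha}\le\delta q^{\,n-\kappa_\nu}$ for some $q\in(\lambda,1)$. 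Only this geometric decay makes the product bounding $T_n$ converge. Your outline contains none of these ingredients; in particular, nothing in it explains why the process ``leaves the growth regime $\{q_n\ge1\}$ permanently''.

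\emph{Case $\lambda>1$.} Your reduction to $\sum_n\min(1,e^{-g_n})=\infty$ with $e^{-g_n}=\sigma_{n+1}/\tau_n^{\alpha}$ is fine, and coincides in spirit with the paper's route. But the ``deterministic lemma'' you invoke is false as stated: $g_{n+1}-g_n\le-\delta$ infinitely often does \emph{not} force $g_n$ to be bounded above along a subsequence (take $g_n=n$ with sparse drops of size $2$). The paper avoids this by proving in Lemma~\ref{omegazero} that the full series $\sum_n\sigma_{n+1}/\tau_n^{\alpha}$ diverges when $\lambda>1$ (via the ratio identity~\eqref{ooo}), and then invoking Lemma~\ref{diverges}, which uses a martingale decomposition rather than Borel--Cantelli to get $T_n\to\infty$ and $\hat T_n\to\infty$ directly. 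If you want to stay within your framework you must actually prove that deterministic divergence, not merely that the increments of $g_n$ are sometimes negative.

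Your ``in particular'' argument is essentially the same as the paper's Lemma~\ref{l:regular}, phrased differently.
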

\medskip

\begin{remark} 
In the subcritical regime one has monopoly almost surely unless the sequence $(\sigma_n)$ is rather irregular. 
\end{remark}

\begin{remark} It is easy to see that if $(\rho_n)$ is bounded then $\theta=0$, see Lemma~\ref{omegazero}, so there is no need to assume the latter. We do it only to make the distinction between the regimes according to the value of $\theta$ more transparent.
\end{remark}

%
%
%

\begin{theorem}[Critical regime]
\label{main3}
Suppose $\alpha>1$ and $\theta\in (0,\infty)$. Then 
\begin{align*}
\sum_{n=0}^{\infty}\frac{\tau_{n+1}}{\tau_n^{\alpha}}=\infty
&\qquad\Rightarrow \qquad
\P(\mathcal{M})=0,\\
\sum_{n=0}^{\infty}\frac{\tau_{n+1}}{\tau_n^{\alpha}}<\infty
&\qquad\Rightarrow \qquad
\P(\mathcal{M})\in (0,1).
\end{align*}
%
%
\end{theorem}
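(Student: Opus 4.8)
I would prove the two implications by separate arguments. Write $\mathcal{M}=\mathcal{M}_0\cup\mathcal{M}_1$ as the union of the disjoint events $\mathcal{M}_0=\{B_n=0\text{ eventually}\}$ and $\mathcal{M}_1=\{B_n=\sigma_n\text{ eventually}\}$; since the bins play symmetric roles it is enough to control $\mathcal{M}_0$. Note that $0<T_n<\tau_n$, so $0<\Theta_n<1$ and $P_n\in(0,1)$, for all $n$. For $t\in(0,\tau_m)$ let $\widehat P_m(t)$ be the parameter \eqref{defp5} evaluated at $\Theta=t/\tau_m$, that is, the binomial parameter that would be seen at time $m$ if the first bin stayed frozen with $t$ balls, and put
\begin{align*}
q_n(t)=\prod_{m=n}^{\infty}\big(1-\widehat P_m(t)\big)^{\sigma_{m+1}}.
\end{align*}
Conditioning successively on $B_{n+1}=0, B_{n+2}=0,\dots$ gives the identity $\P_{\mathcal{F}_n}(B_m=0\text{ for all }m>n)=q_n(T_n)$. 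Since $\theta\in(0,\infty)$ forces $\tau_n$ to grow doubly exponentially, $\sum_m\tau_m^{1-\alpha}<\infty$, and hence $\sum_m\sigma_{m+1}/\tau_m^{\alpha}$ and $\sum_m\tau_{m+1}/\tau_m^{\alpha}$ converge or diverge together; since also $\widehat P_m(t)\sim(t/\tau_m)^{\alpha}\to0$ as $m\to\infty$ for each fixed $t$, the product $q_n(t)$ is strictly positive for all $n,t$ when $\sum_m\tau_{m+1}/\tau_m^{\alpha}<\infty$ and vanishes identically when the series diverges.

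If $\sum_m\tau_{m+1}/\tau_m^{\alpha}=\infty$, then the events $A_n=\{B_m=0\text{ for all }m>n\}$ increase to $\mathcal{M}_0$ and $\P(A_n)=\E[q_n(T_n)]=0$ for every $n$, so $\P(\mathcal{M}_0)=0$; symmetrically $\P(\mathcal{M}_1)=0$, whence $\P(\mathcal{M})=0$. If $\sum_m\tau_{m+1}/\tau_m^{\alpha}<\infty$, then, taking $n=0$ in the above identity, $\P(\mathcal{M})\ge\P(\mathcal{M}_0)\ge\P(B_m=0\text{ for all }m\ge1)=q_0(T_0)>0$, which proves the lower bound in the second implication.

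It remains to show $\P(\mathcal{M})<1$ in the convergent case. The plan is to build a deterministic ``corridor'': integer sequences $a_n\le b_n$ with $a_n\to\infty$ and $b_n/\tau_n\to0$ for which the event $G=\{a_n\le T_n\le b_n\text{ for all }n\ge n_0\}$ has positive probability. On $G$ we have $T_n\ge a_n\to\infty$ and $\tau_n-T_n\ge\tau_n-b_n\to\infty$, so neither bin is eventually frozen; thus $G\subseteq\mathcal{M}^c$ and $\P(\mathcal{M})\le1-\P(G)<1$. To obtain $\P(G)>0$ one argues in two steps. First, since $0<T_0<\tau_0$, every increment value $B_{m+1}=k$ with $0\le k\le\sigma_{m+1}$ has positive conditional probability (the chain never leaves $(0,\tau_m)$), so by steering the first $n_0$ steps one gets $\P(T_{n_0}\in[a_{n_0},b_{n_0}])>0$ as soon as the corridor is placed inside the range of attainable values $[T_0,\tau_{n_0}-(\tau_0-T_0)]$ at time $n_0$. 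Second, conditionally on $T_n=t\in[a_n,b_n]$ the increment $B_{n+1}$ is Binomial with mean $\mu_n(t)=\sigma_{n+1}\widehat P_n(t)\asymp t^{\alpha}\tau_{n+1}/\tau_n^{\alpha}$; if the corridor is built so that $t+\mu_n(t)$ lies well inside $[a_{n+1},b_{n+1}]$ for every $t\in[a_n,b_n]$ with a fixed multiplicative safety margin, then, because $\mu_n(t)\to\infty$ along the corridor, a Chernoff bound yields $\P_{\mathcal{F}_n}(T_{n+1}\in[a_{n+1},b_{n+1}])\ge1-\eps_n$ with $\sum_n\eps_n<\infty$. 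Multiplying, $\P(G)\ge\P(T_{n_0}\in[a_{n_0},b_{n_0}])\prod_{n\ge n_0}(1-\eps_n)>0$.

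The heart of the matter is the construction of the corridor. With $c_n=\tau_{n+1}/\tau_n^{\alpha}$ the corridor dynamics is governed by the iteration $t\mapsto t+c_nt^{\alpha}$, and the assumption $\sum_nc_n<\infty$ is precisely what lets a suitable pair $(a_n),(b_n)$ exist: it permits an upper boundary with $b_n=o(\tau_n)$, because $b_n$ and $\tau_n$ obey essentially the same recursion so their log-ratio is multiplied by $\alpha$ at each step and, starting below $\tau_{n_0}$, tends to $-\infty$; while the term $c_nt^{\alpha}$ — summable when $t$ stays bounded but not once $t$ grows — produces the self-reinforcing growth $a_n\to\infty$ together with $\mu_n(a_n)=c_na_n^{\alpha}\to\infty$. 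Pinning down the two boundaries so that the safety margins hold and the errors $\eps_n$ are summable, and checking that such a corridor exists exactly when $\sum_nc_n<\infty$, is where I expect the real work to lie; the freezing identity, the monotone limit and the Chernoff estimates are routine. The argument uses neither dominance nor the regularity conditions~{\rm (S)} and~{\rm (R)}; the critical hypothesis $\theta\in(0,\infty)$ enters only through the equivalence of the series $\sum_m\sigma_{m+1}/\tau_m^{\alpha}$ and $\sum_m\tau_{m+1}/\tau_m^{\alpha}$ and through the doubly-exponential growth of $\tau_n$, which leaves enough room for $T_n$ to diverge while staying negligible.
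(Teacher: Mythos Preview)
Your arguments for the divergent case and for the lower bound $\P(\mathcal{M})>0$ in the convergent case are both correct and in fact more direct than the paper's. The paper proves $\P(\mathcal{M})=0$ via a martingale argument (its Lemma~\ref{diverges}) rather than your explicit freezing product $q_n$, and proves $\P(\mathcal{M})>0$ (Proposition~\ref{p:cri2}) by building an event out of $\{B_1=\dots=B_m=0\}$ together with a tail condition on the noises, rather than simply observing $q_0(T_0)>0$. Your reductions are cleaner.

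The gap is in $\P(\mathcal{M})<1$. The corridor strategy is the right picture, and you are right that this is where the work lies, but your heuristic for the lower boundary is wrong. The iteration $a_{n+1}=a_n+c_na_n^\alpha$ with $\sum_nc_n<\infty$ does \emph{not} produce ``self-reinforcing growth'': if $a_0$ is small enough that $\frac{1}{(\alpha-1)a_0^{\alpha-1}}>\sum_{n}c_n$, then comparison with $\int_{a_0}^\infty x^{-\alpha}\,dx$ shows the sequence stays bounded. So the mean dynamics alone can keep the lower boundary flat, and your Chernoff step, which needs $\mu_n(a_n)=c_na_n^{\alpha}\to\infty$, then fails. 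Something beyond the recursion $t\mapsto t+c_nt^\alpha$ is needed to force $a_n\to\infty$.

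The paper's resolution (Proposition~\ref{p:cri1}) is to write $\tau_n=\phi_ne^{\theta\alpha^n}$ and prove first that $(\phi_n)$ is \emph{unbounded} whenever $\sum_n\phi_{n+1}/\phi_n^\alpha<\infty$ (Lemma~\ref{l:lambda}). The lower boundary is then taken to be $\chi_n=\max_{1\le k\le n}k^\gamma\phi_k$, which tends to infinity by that lemma. The induction $T_n\ge\chi_n$ splits into two cases: when $\chi_{n+1}=\chi_n$ one simply uses $T_{n+1}\ge T_n$; when $\chi_{n+1}>\chi_n$, so $\phi_{n+1}$ is large, one \emph{discards} $T_n$ and bounds $T_{n+1}$ from below by the increment $\sigma_{n+1}\psi(\Theta_n)$ minus a controlled noise term, which is then shown to exceed $(n+1)^\gamma\phi_{n+1}=\chi_{n+1}$. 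Thus the growth of the lower boundary comes not from accumulating small increments but from the subsequence of times at which $\phi_n$ jumps---this is the mechanism your sketch is missing. The paper also replaces your step-by-step Chernoff bounds by conditioning once on the event $\{|\e_{n+1}|\le n\text{ for all }n\ge m\}$, which has positive probability by Chebyshev, and then running a purely deterministic induction on that event; this bypasses the need for $\mu_n\to\infty$ altogether.
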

\smallskip


\begin{remark} Theorems~\ref{main2} and~\ref{main1} suggest that the phase transition from monopoly to no monopoly happens 
 when $(\tau_n)$ changes from growing slowly to growing fast. Surprisingly, this is no longer true in the critical regime. 
 For example, the sequence   $\tau_n=\lfloor b^n e^{\alpha^n}\rfloor$ exhibits monopoly with positive probability if $b>1$ and almost surely no monopoly if $b\le 1$ .
\end{remark}

The heuristics of the positive feedback scenario can be described as follows. 
By~\eqref{defp5} we have for all $n\in\N_0$
\begin{align}
\label{defp}
P_n=\psi(\Theta_n),
\end{align}
where
\begin{align*}
\psi(x)=\frac{x^{\alpha}}{x^{\alpha}+(1-x)^{\alpha}},\qquad x\in [0,1].
\end{align*}
It follows from~\eqref{def1th} and~\eqref{def1t} that 
\begin{align}
\label{itit11}
\Theta_{n+1}=\frac{\tau_n}{\tau_{n+1}}\Theta_n+\frac{1}{\tau_{n+1}}B_{n+1},
\end{align}
and therefore by~\eqref{defp} we have approximately 
\begin{align}
\label{itit111}
\Theta_{n+1}
\approx\frac{\tau_n}{\tau_{n+1}}\Theta_n+\frac{\sigma_{n+1}}{\tau_{n+1}}P_{n}
=\frac{\tau_n}{\tau_{n+1}}\Theta_n+\frac{\sigma_{n+1}}{\tau_{n+1}}\psi(\Theta_n)
=\psi_n(\Theta_n),
\end{align}
where $\psi_n$ is defined as a convex combination of the identity function and the function $\psi$ with the coefficients $\frac{\tau_n}{\tau_{n+1}}$ and $\frac{\sigma_{n+1}}{\tau_{n+1}}$, respectively. Hence the graph of $\psi_n$ is squeezed in the grey area between those two functions  on Picture~\ref{figure}, and each $\psi_n$ has three fixed points $0$, $1/2$, and $1$. Since $\psi_n'(1/2)>1$ the equilibrium $1/2$ is not an attractor, so the only natural candidates for the limit of $(\Theta_n)$ are the end points $0$ and $1$. This makes the conjecture of dominance plausible, unless the fluctuations of $(B_n)$ overpower the bias caused by the feedback, especially around the equilibrium.
\smallskip 

\begin{figure}
\label{figure}
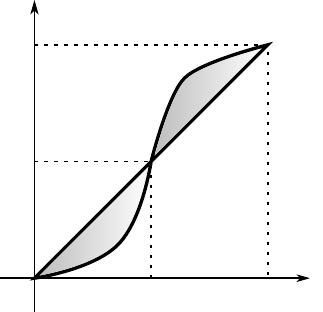
\caption{The graph of $\psi$ and the location of $\psi_n$}
\end{figure}

In order to understand the phase transition from the monopolistic to non-monopolistic behaviour, due to domination and symmetry
it suffices to consider the event $\{\Theta_n\to 0\}$. In that case we have $\psi(\Theta_n)\approx \Theta_n^{\alpha}$, and the iteration~\eqref{itit111}
turns into
\begin{align}
\label{itit222}
\Theta_{n+1}
\approx\frac{\tau_n}{\tau_{n+1}}\Theta_n+\frac{\sigma_{n+1}}{\tau_{n+1}}\Theta_n^{\alpha}.
\end{align}
Next we make a natural assumption that 
one of terms of~\eqref{itit222} is more important than the other in the long run. If the second term is negligible then we have
\begin{align}
\label{ass1}
\Theta_{n}\approx \frac{\tau_{n-1}}{\tau_n}\Theta_{n-1}
\approx\frac{\tau_{n-2}}{\tau_n}\Theta_{n-2}\approx \cdots\approx 
\frac{1}{\tau_n}.
\end{align}
Otherwise, if the first term is negligible, then $\sigma_{n+1}\approx\tau_{n+1}$ due to $\Theta_n\gg \Theta_n^{\alpha}$, and we obtain 
\begin{align*}
\Theta_{n}\approx \Theta_{n-1}^{\alpha}\approx \Theta_{n-2}^{\alpha^2}
\approx \cdots\approx e^{\alpha^n}.
\end{align*}
Equating 
\begin{align*}
\frac{1}{\tau_n}\approx e^{\alpha^n}
\end{align*}
we observe that the phase transition is likely to occur when 
\begin{align*}
\alpha^n \log\tau_n\approx\text{const},
\end{align*}
which indeed corresponds to our critical regime. 
\smallskip

This analysis suggests, in particular, that non-monopolistic regimes should be much easier to analyse than monopolistic ones. Indeed, to show that no monopoly occurs we need to bound $\Theta_n$ from below, which can be done by simply dropping the negligible part of~\eqref{itit222}.  
On the contrary, 
upper bounds for $\Theta_n$ required for monopolistic regimes prove 
to be quite challenging. Another difficulty we face is dealing with random fluctuations of $B_n$, which can be quite big and may begin to interfere  with the averages discussed so far. This may be particularly damaging around the equilibrium, where the averages have less power due to it being a fixed point of each $\psi_n$. 
\smallskip

%
%
%
%




The paper is organised as follows. 
In Section~\ref{s:nomo} we find a sufficient condition for non-occurrence of monopoly (Lemma~\ref{diverges}). This allows us to handle the supercritical case in the positive feedback scenario (Theorem~\ref{main2}). We also show that almost surely monopoly does not occur if there is no feedback (Lemma~\ref{l:lll}).
In Section~\ref{s:nofeedback} we consider the feedback-less case in detail
and strengthen the latter result from no monopoly  to no dominance 
(Theorem~\ref{polya}).  Then we move to the positive 
feedback scenario. In Section~\ref{s:away} we show
that the proportion $\Theta_n$ does not get stuck at the equilibrium and deviates from it significantly (although still infinitesimally) infinitely often. In Section~\ref{s:dom} we prove almost sure dominance (Theorem~\ref{main0}) by first deviating far enough from the equilibrium and then showing that the feedback drags $\Theta_n$ even further away. Section~\ref{s:pro} contains 
some technical lemmas  required later for the subcritical regime. 
In Section~\ref{s:sub} we prove monopoly in the two cases corresponding to a bounded $(\rho_n)$ and $(\rho_n)$ tending to infinity with 
$\lambda<1$ (Propositions~\ref{p:mod1} and \ref{p:fast}, respectively). In the third case of $(\rho_n)$ tending to infinity with 
$\lambda>1$ we use the same idea as in Section~\ref{s:nomo} to show no monopoly. This completes the proof of Theorem~\ref{main1}. Finally, Section~\ref{s:cri} is devoted to the critical regime. There we prove that the probability of monopoly is always less then one, and provide a sufficient condition for it to be zero (Proposition~\ref{p:cri1}). Then we show that that condition is also a necessary one (Proposition~\ref{p:cri2}), completing the proof of Theorem~\ref{main3}.
\smallskip

Finally, we would like to introduce notation that will be used throughout the paper. 
Let
\begin{align*}
\e_n=\frac{B_n-\sigma_{n}P_{n-1}}{\sqrt{\sigma_{n}P_{n-1}(1-P_{n-1})}}, \quad n\in \N,
\end{align*}
be the normalised fluctuation of $B_n$.
It is easy to see that 
\begin{align}
\label{norm}
\E_{\mathcal{F}_{n-1}}\e_{n}=0
\qquad\text{and}\qquad
\E_{\mathcal{F}_{n-1}}\e_{n}^2=1.
\end{align}

It follows from~\eqref{def1t} and~\eqref{defp} that for all $n\in\N_0$
\begin{align}
\label{iteration1}
T_{n+1}=T_n+\sigma_{n+1}\psi(\Theta_n)+\e_{n+1}\sqrt{\sigma_{n+1}P_n(1-P_n)}.
\end{align}
At last, the following obvious  bound on $\psi$ will be used repeatedly:
\begin{align}
\label{psibounds}
x^{\alpha}\le \psi(x)\le 2^{\alpha-1}x^{\alpha}
\qquad\text{for all }x\in [0,1].
\end{align}

\bigskip


\section{No monopoly}
\label{s:nomo}

This section is devoted to non-monopolistic regimes. As we mentioned earlier, showing no monopoly is not hard, and a sufficient condition for non-occurrence of monopoly is established in the lemma below. In fact, 
it will follow from further results that this condition is a necessary one provided the sequence $(\sigma_n)$ satisfies the regularity conditions (S) and (R) and has a well-defined parameter $\theta$.

\begin{lemma} 
\label{diverges}
If 
\begin{align}
\label{div7}
\sum_{n=0}^{\infty}\frac{\sigma_{n+1}}{\tau_n^{\alpha}}=\infty,
\end{align}
then $\P(\mathcal{M})=0$. 
\end{lemma}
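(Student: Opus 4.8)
The plan is to reduce the monopoly event to a one‑sided event by symmetry and then estimate that event directly, by peeling off one time step at a time. Write $\mathcal M_0=\{B_n=0\text{ eventually}\}$ and $\mathcal M_1=\{B_n=\sigma_n\text{ eventually}\}$, so that $\mathcal M=\mathcal M_0\cup\mathcal M_1$. It suffices to prove $\P(\mathcal M_0)=0$; the argument for $\mathcal M_1$ is identical after interchanging the two bins. Now $\mathcal M_0=\bigcup_{N\in\N}A_N$ with $A_N=\bigcap_{n\ge N}\{B_{n+1}=0\}$, an increasing union, so $\P(\mathcal M_0)=\lim_N\P(A_N)$ and it is enough to show $\P_{\mathcal F_N}(A_N)=0$ almost surely for each fixed $N$.

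The key observation is that on $A_N$ the first bin is frozen from time $N$ on: $T_n=T_N\ge T_0>0$, hence $\Theta_n=T_N/\tau_n\ge T_0/\tau_n$ for all $n\ge N$. Combined with $\psi(x)\ge x^{\alpha}$ from~\eqref{psibounds} this gives $P_n=\psi(\Theta_n)\ge (T_0/\tau_n)^{\alpha}$ on $A_N$, and $(T_0/\tau_n)^{\alpha}\le 1$ because $T_0<\tau_0\le\tau_n$. Since $B_{n+1}$ is Binomial$(\sigma_{n+1},P_n)$ given $\mathcal F_n$, we obtain on $A_N$, using $1-x\le e^{-x}$,
\begin{align*}
\P_{\mathcal F_n}(B_{n+1}=0)=(1-P_n)^{\sigma_{n+1}}\le \exp\bigl(-\sigma_{n+1}(T_0/\tau_n)^{\alpha}\bigr).
\end{align*}
To turn this into a bound on $\P_{\mathcal F_N}(A_N)$ I would truncate: for $M\ge N$ put $A_N^{(M)}=\bigcap_{n=N}^{M}\{B_{n+1}=0\}$, so that $\P_{\mathcal F_N}(A_N)=\lim_M\P_{\mathcal F_N}(A_N^{(M)})$. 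Since $A_N^{(M-1)}$ is $\mathcal F_M$‑measurable and pins down $T_M=T_N\ge T_0$, conditioning on $\mathcal F_M$ inside the expectation gives
\begin{align*}
\P_{\mathcal F_N}(A_N^{(M)})=\E_{\mathcal F_N}\bigl[\one_{A_N^{(M-1)}}(1-P_M)^{\sigma_{M+1}}\bigr]\le \exp\bigl(-\sigma_{M+1}(T_0/\tau_M)^{\alpha}\bigr)\,\P_{\mathcal F_N}(A_N^{(M-1)}),
\end{align*}
and iterating down to $n=N$ yields $\P_{\mathcal F_N}(A_N^{(M)})\le \exp\bigl(-T_0^{\alpha}\sum_{n=N}^{M}\sigma_{n+1}/\tau_n^{\alpha}\bigr)$. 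Letting $M\to\infty$ and invoking the hypothesis~\eqref{div7} (dropping finitely many terms does not affect divergence) gives $\P_{\mathcal F_N}(A_N)=0$, hence $\P(A_N)=0$ and $\P(\mathcal M_0)=0$. For $\mathcal M_1$ one replaces $T_0$ by the frozen level $\tau_0-T_0>0$ of the second bin and uses $P_n=\psi(\Theta_n)=1-\psi(1-\Theta_n)\le 1-(1-\Theta_n)^{\alpha}$ in place of $\psi(\Theta_n)\ge\Theta_n^{\alpha}$. Together these give $\P(\mathcal M)=0$.

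I do not expect a real obstacle; the only thing needing care is the conditioning bookkeeping. Because the level $T_N$ at which the bin freezes is itself an $\mathcal F_N$‑measurable random variable, the one‑step estimate must be phrased on the event $A_N^{(M-1)}$ (which forces $T_M=T_N\ge T_0$) rather than by conditioning on a fixed value of $T_N$, and it is cleanest to work with the finite partial products $\P_{\mathcal F_N}(A_N^{(M)})$ and pass to the limit in $M$ instead of manipulating an infinite product inside a conditional expectation. Finally, I would remark that this lemma immediately handles the supercritical regime once one checks that $\theta=\infty$ forces~\eqref{div7}, which is the only extra input needed for Theorem~\ref{main2}.
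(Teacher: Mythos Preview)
Your proof is correct, but it takes a genuinely different route from the paper's. The paper does not estimate $\P(A_N)$ directly; instead it writes $T_n=T_0+M_n+Y_n$ with $Y_n=\sum_{i\le n}\sigma_iP_{i-1}$ and $M_n$ a martingale satisfying $\langle M\rangle_n\le Y_n$, observes that $Y_n\ge\sum_{i\le n}\sigma_i\Theta_{i-1}^{\alpha}\ge T_0^{\alpha}\sum_{i\le n}\sigma_i/\tau_{i-1}^{\alpha}\to\infty$ by~\eqref{div7}, and then invokes the martingale strong law (if $\langle M\rangle_\infty=\infty$ then $M_n/\langle M\rangle_n\to 0$; if $\langle M\rangle_\infty<\infty$ then $M_n$ converges) to conclude $T_n\to\infty$ almost surely. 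Symmetry then kills $\mathcal M$.

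Your approach is more elementary: it needs only the tower property and $1-x\le e^{-x}$, avoiding the martingale limit theorems entirely. (In fact your peeling could be simplified further: since $T_n\ge T_0$ holds unconditionally, the deterministic bound $\P_{\mathcal F_n}(B_{n+1}=0)\le\exp(-T_0^{\alpha}\sigma_{n+1}/\tau_n^{\alpha})$ holds on the whole space, and the product bound follows without restricting to $A_N^{(M-1)}$.) The paper's approach, on the other hand, yields the stronger statement $\P(T_n\to\infty)=1$ as a by‑product, and this is actually reused later: the proof of Theorem~\ref{polya} ends by invoking $T_m\to\infty$ a.s.\ (via Lemma~\ref{l:lll}, which rests on this lemma) to push a Laplace transform to zero. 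Your argument gives $\P(\mathcal M)=0$ but not $T_n\to\infty$ directly, so if you follow this route you would need to recover that separately when it is needed. Your closing remark about Theorem~\ref{main2} is exactly what the paper does next.
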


\begin{proof} 
First, let us show that 
\begin{align}
\label{BC}
\big\{T_n\to\infty\big\}\supset\Big\{\sum_{n=1}^{\infty}\sigma_nP_{n-1}=\infty\Big\}.
\end{align}
Observe that for all $n\in\N_0$ 
\begin{align}
\label{defm}
T_n=T_0+\sum_{i=1}^{n}B_i=T_0+M_n+Y_n,
\end{align}
where 
\begin{align*}
M_n=\sum_{i=1}^{n}\big(B_i-\sigma_iP_{i-1})
\qquad\text{and}\qquad
Y_n=\sum_{i=1}^{n}\sigma_iP_{i-1}.
\end{align*}
We have
\begin{align}
\langle M\rangle_n
=\sum_{i=1}^{n}\sigma_i P_{i-1}(1-P_{i-1})\le Y_n.
\label{my}
\end{align}

Suppose $Y_n\to\infty$. If $\langle M\rangle_n$ converges then $M_n$ converges, see~\cite[\S 12.13]{DW}, and $T_n\to\infty$ by~\eqref{defm}.
If $\langle M\rangle_n\to\infty$ then $M_n/\langle M\rangle_n\to 0$, see~\cite[\S 12.14]{DW}. By~\eqref{my} we have 
$M_n/Y_n\to 0$, which implies $T_n\to\infty$ by~\eqref{defm}. This completes the proof of~\eqref{BC}.
\smallskip

Second, observe that by~\eqref{psibounds} almost surely
\begin{align*}
\sum_{n=1}^{\infty}\sigma_{n}P_{n-1}
=\sum_{n=1}^{\infty}\sigma_{n}\psi(\Theta_{n-1})
\ge \sum_{n=1}^{\infty}\sigma_{n}\Theta_{n-1}^{\alpha}
=\sum_{n=1}^{\infty}\frac{\sigma_{n}}{\tau_{n-1}^{\alpha}}T_{n-1}^{\alpha}
\ge \sum_{n=1}^{\infty}\frac{\sigma_{n}}{\tau_{n-1}^{\alpha}}=\infty.
\end{align*}
Hence by~\eqref{BC} we have $\P(T_n\to\infty)=1$. By symmetry this implies $\P(\mathcal{M})=0$.
\end{proof}
\smallskip

\begin{proof}[Proof of Theorem~\ref{main2}] 
By Lemma~\ref{diverges} it suffices to show that $\theta=\infty$ implies~\eqref{div7}. 
Suppose the series converges. Then $\frac{\sigma_{n}}{\tau_{n-1}^{\alpha}}\to 0$. Let $k$ be such that $\sigma_{n}<\tau_{n-1}^{\alpha}$ for all $n\ge k$. Then for all $n\ge k$
\begin{align*}
\tau_n\le \tau_{n-1}+\tau_{n-1}^{\alpha}\le 2\tau_{n-1}^{\alpha}\le\cdots\le 2^{1+\alpha+\cdots+\alpha^{n-k-1}}\tau_k^{\alpha^{n-k}}
\le (2\tau_k)^{\alpha^{n-k}}.
\end{align*}  
This implies 
\begin{align*}
\theta\le \alpha^{-k}\log(2\tau_k)<\infty
\end{align*}
leading to a contradiction.
\end{proof}
\smallskip

\begin{lemma} 
\label{l:lll}
Suppose $\alpha=1$. Then $\P(\mathcal{M})=0$. 
\end{lemma}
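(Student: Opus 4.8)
The plan is to deduce this from Lemma~\ref{diverges}: it suffices to check that when $\alpha=1$ the divergence condition~\eqref{div7} holds, i.e.\ $\sum_{n=0}^{\infty}\sigma_{n+1}/\tau_n=\infty$. Unlike in the supercritical case, here no assumption on the growth of $(\sigma_n)$ is needed, so the series should diverge unconditionally. The key observation is that $\tau_n/\tau_{n-1}=1+\sigma_n/\tau_{n-1}=1+\rho_{n-1}$, and that the partial products of $(1+\rho_{n-1})$ telescope to $\tau_n/\tau_0$. If the series $\sum_n \rho_{n-1}=\sum_n \sigma_n/\tau_{n-1}$ converged, then $\prod_n(1+\rho_{n-1})$ would converge to a finite limit, so $\tau_n$ would stay bounded; but $(\tau_n)$ is non-decreasing and $\sigma_n>0$, so $\tau_n\to\tau_\infty\in(\tau_0,\infty]$. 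If $\tau_\infty<\infty$ then in fact $\sigma_n\to 0$ and one still needs $\sum_n\sigma_n=\tau_\infty-\tau_0<\infty$ to be compatible with $\sum_n\sigma_n/\tau_{n-1}<\infty$ — which it is, since $\tau_{n-1}\ge\tau_0$, giving $\sum_n\sigma_n/\tau_{n-1}\le(\tau_\infty-\tau_0)/\tau_0<\infty$.

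So the dichotomy is: either $\tau_n\to\infty$, or $\tau_n$ stays bounded. In the first case $\sum_n\sigma_n/\tau_{n-1}=\infty$ because a non-decreasing sequence $\tau_n\to\infty$ with increments $\sigma_n=\tau_n-\tau_{n-1}$ satisfies $\sum_n(\tau_n-\tau_{n-1})/\tau_{n-1}=\infty$ (this is the standard fact that $\sum (a_{n}-a_{n-1})/a_{n-1}$ diverges when $a_n\uparrow\infty$, seen by comparison with $\int^{\tau_\infty}dx/x=\infty$, or by noting $(\tau_n-\tau_{n-1})/\tau_{n-1}\ge \log(\tau_n/\tau_{n-1})$ and summing the telescoping logs). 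In the second case, where $\tau_n\uparrow\tau_\infty<\infty$, I instead argue directly: with $\alpha=1$ we have $P_n=\Theta_n$, so $\sum_n\sigma_nP_{n-1}=\sum_n\sigma_n\Theta_{n-1}$, and since $\Theta_n$ is bounded below this is harder to control — so here it is cleaner to go back to~\eqref{div7} and note that boundedness of $\tau_n$ means $\tau_n\le\tau_\infty$ for all $n$, hence $\sum_n\sigma_{n+1}/\tau_n\ge \tau_\infty^{-1}\sum_n\sigma_{n+1}=\tau_\infty^{-1}(\tau_\infty-\tau_0)\cdot$, which is still finite. This shows~\eqref{div7} can genuinely fail when $(\tau_n)$ is bounded, so Lemma~\ref{diverges} alone does not suffice.

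Therefore the real plan has two branches. If $\tau_n\to\infty$, invoke Lemma~\ref{diverges} via the divergence of $\sum_n\sigma_{n+1}/\tau_n$ as above. If $\tau_n$ is bounded, note that then only finitely many balls are ever added beyond a finite total, so $B_n=\tau_n-\tau_{n-1}$-many balls... more carefully: $\sum_n\sigma_n<\infty$ forces, after splitting by the Borel--Cantelli lemma on the events $\{B_n\ge 1\}$ and $\{B_n\le\sigma_n-1\}$, that only finitely many balls land in each bin is \emph{not} automatic, so instead I would observe that with bounded $\tau_n$ and $\alpha=1$, $\Theta_n$ is a bounded martingale (since $\E_{\mathcal F_{n-1}}\Theta_n=\frac{\tau_{n-1}}{\tau_n}\Theta_{n-1}+\frac{\sigma_n}{\tau_n}\Theta_{n-1}=\Theta_{n-1}$), hence converges a.s.\ to some $\Theta\in[\frac{T_0}{\tau_\infty},1-\frac{\tau_0-T_0}{\tau_\infty}]\subset(0,1)$, and monopoly would require $T_n$ eventually constant or $T_n=\tau_n$ eventually, both of which contradict $\Theta\in(0,1)$ and $\sigma_n>0$ — well, $B_n=0$ eventually is possible with $\Theta\in(0,1)$ if $\tau_n$ is bounded. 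So the truly clean route is: a.s.\ $\Theta$ satisfies $\P(\Theta=0)=\P(\Theta=1)=0$ by the no-dominance part, which is Lemma... but that is Theorem~\ref{polya}, proved later.

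The main obstacle, then, is precisely that Lemma~\ref{diverges} only gives no monopoly when $(\tau_n)$ grows fast enough, and for bounded $(\tau_n)$ one needs a separate and genuinely probabilistic argument. I expect the paper handles the bounded case by a short martingale/second-moment estimate showing $\P(B_n=0\text{ eventually})=0$ directly — e.g.\ since $\sum_n\sigma_nP_{n-1}(1-P_{n-1})$ must be controlled, and $P_{n-1}=\Theta_{n-1}$ stays away from $\{0,1\}$ on the event that $T_n$ stabilises, forcing infinitely many non-trivial binomials with uniformly positive variance, which by a conditional Borel--Cantelli / Lévy argument puts balls in both bins infinitely often with probability one. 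Writing that estimate carefully is the crux; the $\tau_n\to\infty$ branch via Lemma~\ref{diverges} is immediate.
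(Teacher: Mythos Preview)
You have overlooked a basic feature of the model: the $\sigma_n$ are positive integers. They are described as ``the number of added balls at times $n$'', and more to the point they serve as the \emph{size} parameters of the binomial random variables $B_n$ in~\eqref{def1t}. Hence $\sigma_n\ge 1$ for every $n$, so $\tau_n\ge \tau_0+n\to\infty$ unconditionally. The ``bounded $(\tau_n)$'' branch of your dichotomy is therefore vacuous, and the entire second half of your plan --- the martingale/Borel--Cantelli patch for the bounded case, the worry that Lemma~\ref{diverges} might not suffice, the appeal to Theorem~\ref{polya} --- is unnecessary.

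Once you drop that phantom case, your argument for the unbounded case is exactly what the paper does. Your telescoping-logarithm computation
\[
\sum_{n}\frac{\tau_n-\tau_{n-1}}{\tau_{n-1}}\ \ge\ \sum_n \log\frac{\tau_n}{\tau_{n-1}}\ =\ \log\frac{\tau_n}{\tau_0}\ \to\ \infty
\]
is the content of Lemma~\ref{1101series} (the paper writes the same telescope as $-\sum_k\log(1-\sigma_k/\tau_k)$), and the paper's proof of Lemma~\ref{l:lll} is the one-line combination of Lemma~\ref{diverges} with Lemma~\ref{1101series}. So your approach is correct and matches the paper's; the only flaw is the spurious case split.
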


\begin{proof} 
This follows from Lemmas~\ref{diverges} and Lemma~\ref{1101series} below.
\end{proof}
\smallskip

\begin{lemma} 
\label{1101series}
We have
\begin{align*}
\sum_{n=0}^{\infty}\frac{\sigma_n}{\tau_n}=\infty.
\end{align*}
\end{lemma}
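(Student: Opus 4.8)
The claim is that $\sum_{n=0}^\infty \sigma_n/\tau_n = \infty$, where $\tau_n = \tau_0 + \sum_{i=1}^n \sigma_i$ and $(\sigma_n)$ is an arbitrary positive sequence. This is a purely deterministic statement about partial sums, so no probability enters. The plan is to compare $\sigma_n/\tau_n$ with the increment $\log\tau_n - \log\tau_{n-1}$, exploiting the telescoping of the latter. Concretely, $\sigma_n/\tau_n = (\tau_n - \tau_{n-1})/\tau_n$, and since $\log$ is concave with derivative $1/x$, the mean value theorem (or the elementary inequality $\log b - \log a \le (b-a)/a$ for $0<a\le b$, applied the other way) gives $\log\tau_n - \log\tau_{n-1} \le (\tau_n - \tau_{n-1})/\tau_{n-1}$. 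That bound has $\tau_{n-1}$ in the denominator, which is the wrong direction; instead I would use $(\tau_n - \tau_{n-1})/\tau_n \ge \log\tau_n - \log\tau_{n-1}$, which is exactly the inequality $1 - a/b \ge \log(b/a)$ valid for $b \ge a > 0$ (equivalently $1 - x \ge \log(1/x)$ with $x = a/b \in (0,1]$, i.e. $e^{x-1} \ge x$... let me restate: $\log t \le t - 1$ for $t>0$, apply with $t = \tau_{n-1}/\tau_n \le 1$ to get $\log(\tau_{n-1}/\tau_n) \le \tau_{n-1}/\tau_n - 1 = -\sigma_n/\tau_n$, hence $\sigma_n/\tau_n \ge \log\tau_n - \log\tau_{n-1}$). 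Summing from $n=1$ to $N$ telescopes to $\log\tau_N - \log\tau_0$.

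The remaining point is to show the right-hand side tends to infinity, i.e. that $\tau_N \to \infty$. If $\tau_N$ stayed bounded, then $\sigma_n = \tau_n - \tau_{n-1} \to 0$ is not by itself enough, but boundedness of $(\tau_n)$ together with $\tau_n$ nondecreasing (since $\sigma_n > 0$) means $\tau_n$ converges to some finite limit $L > \tau_0$; then $\sum \sigma_n = L - \tau_0 < \infty$, which is consistent, so this case is genuinely possible and must be handled directly. In that case $\sigma_n \to 0$ but more importantly $\tau_n$ is bounded below by $\tau_0 > 0$ and above by $L$, so $\sigma_n/\tau_n \ge \sigma_n/L$, and $\sum \sigma_n/\tau_n \ge L^{-1}\sum\sigma_n = L^{-1}(L - \tau_0)$ — finite, so the telescoping argument fails here! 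So I need to redo the bounded case. Actually the correct dichotomy: either $\sum_n \sigma_n = \infty$, in which case $\tau_n \to \infty$ and the telescoping inequality $\sum \sigma_n/\tau_n \ge \log\tau_N - \log\tau_0 \to \infty$ finishes it; or $\sum_n \sigma_n < \infty$, in which case $\tau_n \to L \in (0,\infty)$ is bounded, and then $\sigma_n/\tau_n \ge \sigma_n/L$... which again only gives a finite lower bound.

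So the telescoping must be kept but sharpened. In the bounded case, a better comparison is available: $\sigma_n/\tau_n \ge \sigma_n/\tau_\infty$ is too lossy, but we can instead use $\sigma_n/\tau_n = (\tau_n - \tau_{n-1})/\tau_n \ge 1 - \tau_{n-1}/\tau_n$, and — hmm, wait, I realize the statement as given must actually be \emph{true}, so let me reconsider. If $\tau_n \uparrow L < \infty$, is $\sum (\tau_n - \tau_{n-1})/\tau_n = \infty$? With $\tau_n$ bounded away from $0$ and the numerators summable, the sum is finite. Example: $\tau_0 = 1$, $\sigma_n = 2^{-n}$, so $\tau_n = 2 - 2^{-n} \in [1,2)$, and $\sum \sigma_n/\tau_n \le \sum 2^{-n} < \infty$. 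This \emph{contradicts} the lemma! Therefore either I am misreading the hypotheses — perhaps $\tau_n \to \infty$ is implicitly assumed from context (e.g. $\sigma_n \ge 1$ is natural if $\sigma_n$ counts balls, so $\tau_n \to \infty$ automatically) — or the lemma is stated under the standing assumption that $\sigma_n$ are positive \emph{integers}, forcing $\sigma_n \ge 1$ and hence $\tau_n \to \infty$. I will therefore carry out the proof under that reading: since $\sigma_n \ge 1$ for all $n$ (the number of balls added is a positive integer), we have $\tau_n \ge \tau_0 + n \to \infty$, and then the telescoping bound $\sum_{n=1}^N \sigma_n/\tau_n \ge \sum_{n=1}^N(\log\tau_n - \log\tau_{n-1}) = \log\tau_N - \log\tau_0 \to \infty$ completes the proof.

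The main obstacle, then, is simply recognizing the right elementary inequality ($\log t \le t-1$) and the telescoping structure; there is no real difficulty once one sees that $\sigma_n/\tau_n$ dominates the logarithmic increment. The only subtlety worth flagging explicitly in the write-up is the use of $\tau_n \to \infty$, which follows from $\sigma_n \ge 1$ (integrality of the number of added balls); if instead the paper's convention allows non-integer $\sigma_n$, one should add the hypothesis $\tau_n \to \infty$ (equivalently $\sum \sigma_n = \infty$) to the statement, since it is false otherwise.
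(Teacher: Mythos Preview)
Your overall strategy---compare $\sigma_n/\tau_n$ with the telescoping log-increments and use $\tau_n\to\infty$---is exactly the paper's idea, and your observation that one needs $\sigma_n\ge 1$ (integrality of the number of added balls) to force $\tau_n\to\infty$ is correct and is indeed an unstated standing hypothesis.

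However, there is a sign error in your key step. From $\log t \le t-1$ with $t=\tau_{n-1}/\tau_n$ you correctly obtain $\log(\tau_{n-1}/\tau_n)\le -\sigma_n/\tau_n$, but this rearranges to
\[
\frac{\sigma_n}{\tau_n} \;\le\; \log\tau_n-\log\tau_{n-1},
\]
the \emph{opposite} of what you wrote. So your telescoping gives only an upper bound $\sum_{n\le N}\sigma_n/\tau_n \le \log\tau_N-\log\tau_0$, which does not prove divergence.

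The paper gets around this by noting that, since $\sigma_k/\tau_k\in(0,1)$, the divergence of $-\sum_k \log(1-\sigma_k/\tau_k)=\log(\tau_N/\tau_0)\to\infty$ is \emph{equivalent} to the divergence of $\sum_k \sigma_k/\tau_k$ (a standard fact: if the terms do not tend to zero the series diverges trivially, and if they do then $-\log(1-x_k)\sim x_k$). To salvage your direct-inequality approach, split into cases: either $\sigma_n/\tau_n\ge 1/2$ infinitely often, in which case the sum trivially diverges, or eventually $\sigma_n/\tau_n<1/2$, and on that range $-\log(1-x)\le 2x$ yields $\sigma_n/\tau_n\ge \tfrac12(\log\tau_n-\log\tau_{n-1})$, after which your telescoping goes through with an extra factor of $1/2$.
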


\begin{proof} 
We have 
\begin{align*}
\tau_n=\tau_{n-1}\Big(1-\frac{\sigma_{n}}{\tau_{n}}\Big)^{-1}=\tau_0\prod_{k=1}^{n}\Big(1-\frac{\sigma_k}{\tau_k}\Big)^{-1}
=\tau_0\exp\Big\{-\sum_{k=1}^n\log \Big(1-\frac{\sigma_k}{\tau_k}\Big)\Big\}. 
\end{align*}
This implies 
\begin{align*}
-\sum_{k=1}^n\log \Big(1-\frac{\sigma_k}{\tau_k}\Big)=\log\frac{\tau_n}{\tau_0}\to \infty,
\end{align*}
which is equivalent to the statement of the lemma.
\end{proof}

\bigskip


\section{No feedback scenario}

\label{s:nofeedback}

In this section we use Laplace transforms to prove that no dominance occurs in the case $\alpha=1$.

\begin{proof}[Proof of Theorem~\ref{polya}] 
Observe that $(\Theta_n)$ is a bounded martingale as for all $n$
we have by~\eqref{itit11}
\begin{align*}
\E_{\mathcal{F}_{n-1}}\Theta_n=\frac{\tau_{n-1}}{\tau_n}\Theta_{n-1}+\frac{\sigma_n}{\tau_n}\psi(\Theta_{n-1})=\Theta_{n-1}. 
\end{align*}
Hence it converges to a random variable $\Theta$ bounded between zero and one. By symmetry it suffices to prove that $\P(\Theta=0)=0$. 
\smallskip

Denote by
\begin{align*}
f_n(\lambda)=\E e^{-\lambda \Theta_n}
\qquad\text{and}\qquad 
f(\lambda)=\E e^{-\lambda \Theta},
\qquad \lambda\in\R,
\end{align*} 
the Laplace transforms of $\Theta_n$ and $\Theta$, respectively. 
Since 
\begin{align*}
f(\lambda)\ge \P(\Theta=0)
\end{align*}
for all $\lambda$, it suffices to show that there is a sequence $(\lambda_m)$ such that 
\begin{align}
\label{bbb2}
\lim_{m\to\infty}f(\lambda_m)=0.
\end{align}
To do so, observe that there is $c\in (0,1)$ such that 
\begin{align}
\label{exp0}
e^{-x}\le 1-x+\frac{x^2}{2}
\end{align}
for all $x\in [0,c]$, and then define $\lambda_m=c\tau_m$.
\smallskip

Let us prove by induction over $k$ that  
\begin{align}
\label{indu}
f_n(\lambda_m)\le f_{n-k}\Big(\lambda_m-\lambda_m^2\sum_{i=n-k+1}^{n}\frac{\sigma_i}{\tau_i^2}\Big)
\end{align}
for all $m$, $n>m$, and $1\le k\le n-m$. 
\smallskip

First, observe that  for all $\lambda\in [0,c\tau_m]$ we have by~\eqref{itit11} and~\eqref{exp0} that
\begin{align*}
f_n(\lambda)
&=\E \Big[\exp\Big\{-\frac{\tau_{n-1}}{\tau_n}\lambda\Theta_{n-1}+\sigma_n\log
\Big(1-\Theta_{n-1}+\Theta_{n-1} \exp\Big\{-\frac{\lambda}{\tau_n}\Big\}\Big)\Big]\\
&\le \E \Big[\exp\Big\{-\frac{\tau_{n-1}}{\tau_n}\lambda\Theta_{n-1}
+\sigma_n\log\Big(1- \frac{\lambda}{\tau_n}\Theta_{n-1}+ \frac{\lambda^2}{2\tau_n^2}\Theta_{n-1}\Big)\Big\}\Big]
\end{align*}
for all $m$ and $n>m$ since 
\begin{align*}
\frac{\lambda}{\tau_n}\le \frac{\lambda}{\tau_m}=c.
\end{align*}
Further, using $\log(1+x)\le x$ and dropping the factor $1/2$  we obtain
\begin{align}
f_n(\lambda)
&\le \E \Big[\exp\Big\{-\lambda\Theta_{n-1}
+ \frac{\sigma_n}{\tau_n^2}\lambda^2\Theta_{n-1}\Big\}\Big]
\le f_{n-1}\Big(\lambda-\frac{\sigma_n}{\tau_n^2}\lambda^2\Big).
\label{it7}
\end{align}
This proves the statement~\eqref{indu} for $k=1$. 
Further, suppose~\eqref{indu} it is true for some $k$. Observe that 
\begin{align}
\label{bbb1}
\lambda_m-\lambda_m^2\sum_{i=n-k+1}^{n}\frac{\sigma_i}{\tau_i^2}\le \lambda_m=c\tau_m
\end{align}
and, since $\tau_m\le \tau_{n-k}$ and $c<1$,
\begin{align}
\label{bbb}
\lambda_m-\lambda_m^2\sum_{i=n-k+1}^{n}\frac{\sigma_i}{\tau_i^2}\ge
\lambda_m\Big[1-c\tau_m\int_{\tau_{n-k}}^{\infty}\frac{dx}{x^2}\Big]
=\lambda_m\Big[1-c\frac{\tau_m}{\tau_{n-k}}\Big]\ge \lambda_m(1-c)\ge 0.
\end{align}
By~\eqref{bbb1} and~\eqref{bbb} we can use~\eqref{it7} together with monotonicity of $f_{n-k-1}$ to obtain 
\begin{align*}
 f_{n-k}\Big(\lambda_m-\lambda_m^2\sum_{i=n-k+1}^{n}\frac{\sigma_i}{\tau_i^2}\Big)
& \le f_{n-k-1}\Big(\lambda_m-\lambda_m^2\sum_{i=n-k+1}^{n}\frac{\sigma_i}{\tau_i^2}
 -\frac{\sigma_{n-k}}{\tau_{n-k}^2}\Big(\lambda_m-\lambda_m^2\sum_{i=n-k+1}^{n}\frac{\sigma_i}{\tau_i^2}\Big)^2\Big)\\
& \le f_{n-k-1}\Big(\lambda_m-\lambda_m^2\sum_{i=n-k}^{n}\frac{\sigma_i}{\tau_i^2}\Big)
\end{align*}
by replacing the squared term by $\lambda_m^2$ only. 
This, together with the induction hypothesis~\eqref{indu} for $k$, completes the induction step from $k$ to $k+1$.  
\smallskip

Substituting $k=n-m$ into~\eqref{indu} and using~\eqref{bbb} together with monotonicity of $f_m$ we obtain
\begin{align}
f_n(\lambda_m)\le f_{m}\Big(\lambda_m-\lambda_m^2\sum_{i=m+1}^{n}\frac{\sigma_i}{\tau_i^2}\Big)
\le f_m \big(\lambda_m(1-c)\big)
=\E e^{-c(1-c)\tau_m\Theta_m}=\E e^{-c(1-c)T_m}
\label{lili7}
\end{align}
for all $m$ and $n>m$. 
\smallskip

By the dominated convergence theorem we have  $f_n(\lambda)\to f(\lambda)$ 
for all $\lambda>0$.  Hence we can take the limit in~\eqref{lili7} to obtain 
\begin{align*}
f(\lambda_m)\le \E e^{-c(1-c)T_m}
\end{align*}
for all $m$. 
To prove~\eqref{bbb2},
it remains to notice that 
\begin{align*}
\lim_{m\to\infty}\E e^{-c(1-c)T_m}= 0
\end{align*}
by the dominated convergence theorem as there is no monopoly, that is, $T_m\to\infty$ almost surely by Lemma~\ref{l:lll}.
\end{proof} 
\bigskip



\section{Getting away from the equilibrium} 

In this section we show that in the positive feedback scenario $\Theta_n$ deviates from the equilibrium far enough infinitely often.  

\label{s:away}

\begin{prop}  
\label{devp}
Suppose $\alpha>1$ and  {\rm (S)} is satisfied. 
Let $(\delta_n)$ be a positive sequence converging to zero and such that 
\begin{align}
\label{ser1}
\sum_{n=0}^{\infty}\delta_n\, \frac{\sigma_{n+1}}{\tau_{n+1}}<\infty. 
\end{align}
Then
\begin{align*}
\P\big(|\Theta_n-1/2|>\delta_n \text{ infinitely often}\big)=1.
\end{align*}
\end{prop}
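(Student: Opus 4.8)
The plan is to argue by contradiction via a Borel--Cantelli / martingale-convergence argument: if $|\Theta_n - 1/2| \le \delta_n$ for all large $n$, then the process is trapped near the equilibrium, and I will show this forces the (conditional) variance of $\Theta_n$ to accumulate, contradicting almost sure convergence of the associated martingale. Concretely, fix the event $A = \{|\Theta_n - 1/2| \le \delta_n \text{ eventually}\}$ and suppose for contradiction that $\P(A) > 0$.

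First I would record the one-step fluctuation structure. From~\eqref{itit11} and~\eqref{iteration1},
\begin{align*}
\Theta_{n+1} - \Theta_n = \frac{\sigma_{n+1}}{\tau_{n+1}}\big(\psi(\Theta_n) - \Theta_n\big) + \frac{\sqrt{\sigma_{n+1}P_n(1-P_n)}}{\tau_{n+1}}\,\e_{n+1},
\end{align*}
so that $\E_{\mathcal F_n}(\Theta_{n+1}-\Theta_n) = \frac{\sigma_{n+1}}{\tau_{n+1}}(\psi(\Theta_n)-\Theta_n)$ and the conditional variance is $\frac{\sigma_{n+1}P_n(1-P_n)}{\tau_{n+1}^2}$. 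On the set where $|\Theta_n - 1/2| \le \delta_n$ one has, since $\psi'(1/2) = \alpha > 1$, that $|\psi(\Theta_n) - \Theta_n| \asymp (\alpha-1)|\Theta_n - 1/2| \le C\delta_n$, so the drift term is bounded in absolute value by $C\delta_n \frac{\sigma_{n+1}}{\tau_{n+1}}$, which is summable by~\eqref{ser1}. Hence on $A$ the drift part of $(\Theta_n)$ has bounded variation; since $(\Theta_n)$ is bounded and converges (the $\alpha=1$ martingale argument does not apply, but boundedness plus a Doob-type decomposition on the event $A$ suffices — alternatively one observes $\Theta_n \to 1/2$ on $A$ because $|\Theta_n - 1/2| \le \delta_n \to 0$), the martingale part $\widetilde M_n = \sum_{i\le n} \frac{\sqrt{\sigma_i P_{i-1}(1-P_{i-1})}}{\tau_i}\e_i$ must also converge on $A$.

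Next I would derive the contradiction from the quadratic variation. On $A$, for large $n$ the parameter $P_{n-1} = \psi(\Theta_{n-1})$ is bounded away from $0$ and $1$ (it lies near $1/2$), so $P_{n-1}(1-P_{n-1}) \ge c > 0$, giving
\begin{align*}
\langle \widetilde M\rangle_n = \sum_{i=1}^n \frac{\sigma_i P_{i-1}(1-P_{i-1})}{\tau_i^2} \ge c\sum_{i : \text{large}} \frac{\sigma_i}{\tau_i^2}
\end{align*}
on $A$. If the series $\sum \sigma_i/\tau_i^2$ diverges, then $\langle \widetilde M\rangle_n \to \infty$ on $A$, which by the martingale strong law (as in~\cite[\S12.14]{DW}) forces $\widetilde M_n/\langle \widetilde M\rangle_n \to 0$ but more importantly is incompatible with $\widetilde M_n$ converging on a positive-probability set — contradiction, so $\P(A) = 0$ in this case. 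The remaining case is $\sum \sigma_i/\tau_i^2 < \infty$; here condition~(S) enters: if $\sigma_n \to \infty$ then $\rho_{n-1} = \sigma_n/\tau_{n-1} \to 0$ forces $\tau_n/\tau_{n-1} \to 1$ and one checks $\sum \sigma_n/\tau_n^2 = \infty$ anyway (compare with $\sum(\tau_n - \tau_{n-1})/\tau_n^2 \asymp \int dx/x^2$ only converges, so I must instead use that $\rho_n$ small makes $\delta_n \sigma_{n+1}/\tau_{n+1}$ comparable to $\delta_n \rho_n$...), and if $(\sigma_n)$ is bounded then $\tau_n = O(n)$ so $\sum \sigma_n/\tau_n^2$ could converge — in that subcase one instead shows directly that~\eqref{ser1} cannot hold unless $\delta_n$ is itself summable-weighted, and then a cruder estimate (the single-step probability that $\Theta$ escapes the $\delta_n$-band, which is bounded below using anti-concentration of the Binomial $B_{n+1}$) combined with conditional Borel--Cantelli (Lévy's extension) yields infinitely many escapes.

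The main obstacle is the second case, $\sum \sigma_i/\tau_i^2 < \infty$: here the martingale fluctuations alone are too small to escape the band, and one genuinely needs the interplay between the condition~\eqref{ser1}, the regularity~(S), and a lower bound on the escape probability at each step. I expect the cleanest route is the conditional Borel--Cantelli lemma: show that on $\{|\Theta_{n}-1/2|\le \delta_n\}$ the conditional probability $\P_{\mathcal F_n}(|\Theta_{n+1}-1/2| > \delta_{n+1})$ is bounded below by a constant times $\min\{1, \text{(something not summable)}\}$, using that $B_{n+1}$ is Binomial$(\sigma_{n+1}, P_n)$ with $P_n$ near $1/2$ so it has standard deviation $\asymp \sqrt{\sigma_{n+1}}$ and hence $\Theta_{n+1}$ has conditional fluctuations of order $\sqrt{\sigma_{n+1}}/\tau_{n+1}$; comparing this with $\delta_{n+1}$ and summing, the divergence needed is exactly the negation of a condition that~\eqref{ser1} plus~(S) rules out. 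Making the anti-concentration bound for the Binomial uniform in the regime where $\sigma_{n+1}$ may be bounded (Bernoulli-type, $\sigma_{n+1}=O(1)$) versus large is the delicate point, but in the bounded case $\Theta_n$ makes order-one relative jumps infinitely often by a direct Lévy--Borel--Cantelli argument since $P_n$ stays bounded away from $0,1$.
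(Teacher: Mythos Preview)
Your proposed contradiction via the quadratic variation cannot work: the series $\sum_{i}\sigma_i/\tau_i^2$ \emph{always} converges, since $\sigma_i=\tau_i-\tau_{i-1}$ and
\[
\sum_{i\ge 1}\frac{\sigma_i}{\tau_i^2}\le \int_{\tau_0}^{\infty}\frac{dx}{x^2}=\frac{1}{\tau_0}<\infty.
\]
So $\langle\widetilde M\rangle_\infty$ is bounded deterministically, $\widetilde M_n$ converges almost surely on the whole space, and no contradiction arises on $A$. You half-notice this (``only converges, so I must instead use\ldots''), but the fallback to a single-step anti-concentration/Borel--Cantelli argument also fails. In the prototypical bounded case $\sigma_n\equiv 1$, $\tau_n\sim n$, condition~\eqref{ser1} allows e.g.\ $\delta_n=1/\log^2 n$; the one-step fluctuation of $\Theta_{n+1}$ is of order $1/\tau_{n+1}\sim 1/n\ll\delta_{n+1}$, and the drift is $O(\delta_n/n)$, so $\P_{\mathcal F_n}(|\Theta_{n+1}-1/2|>\delta_{n+1})$ is not bounded below at all --- a single step simply cannot escape the band.

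The missing idea is that $1/2$ is an \emph{unstable} fixed point of the one-step map: linearising $\psi$ near $1/2$ gives $\Theta_{n+1}-\tfrac12\approx\kappa_n(\Theta_n-\tfrac12)+\text{noise}$ with $\kappa_n=\frac{\tau_n+\alpha\sigma_{n+1}}{\tau_{n+1}}>1$. The paper iterates this to write, on $\mathcal H_m$,
\[
\Theta_n-\tfrac12\approx\pi_{m,n}\Big(\Theta_m-\tfrac12+\tfrac12\,\mu_{m,n}N_{m,n}\Big),
\]
where $\pi_{m,n}=\prod_{j=m}^{n-1}\kappa_j$, $N_{m,n}$ is (via a CLT argument, uniform under condition~(S)) asymptotically standard normal conditionally on $\mathcal F_m$, and $\mu_{m,n}$ is the accumulated noise scale. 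The crux is that $\log\pi_{m,n}\asymp(\alpha-1)\sum_{j=m+1}^n\sigma_j/\tau_j\to\infty$ by Lemma~\ref{1101series}, and one can choose $n_m$ so that $\pi_{m,n_m}\mu_{m,n_m}\to\infty$; since $N_{m,n_m}$ is of order one, the right-hand side blows up, which is incompatible with $|\Theta_{n_m}-\tfrac12|\le\delta_{n_m}\to 0$. Condition~\eqref{ser1} is used precisely to control the linearisation error $\kappa_j(\Theta_j)/\kappa_j-1=O(\delta_j\sigma_{j+1}/\tau_{j+1})$ uniformly over the infinite product. Your additive decomposition treats the drift as a bounded-variation perturbation, which throws away the multiplicative expansion that is the whole mechanism.
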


The idea of the proof can be vaguely described as follows. 
We assume that $\Theta_n$ does not deviate from the equilibrium enough and aim to show that it is very unlikely. If $\Theta_n\approx 1/2$ then 
$P_n\approx 1/2$ and $\psi(\Theta_n)\approx \alpha (\Theta_n-1/2)+1/2$ as $\psi'(1/2)=\alpha$. 
Substituting this into~\eqref{iteration1} we obtain 
\begin{align*}
\Theta_{n}-\frac 1 2 
&\approx\Big(\frac{\tau_{n-1}}{\tau_{n}} +\alpha\frac{\sigma_n}{\tau_n}\Big)\Big(\Theta_{n-1}-\frac 1 2 \Big)+\e_{n}\frac{\sqrt{\sigma_{n}}}{2\tau_{n}}.  
\end{align*}
Carefully iterating this from a fixed large $m$ to $n\to\infty$, we obtain 
\begin{align}
\label{app1}
\Theta_{n}-\frac 1 2 \approx \pi_{m,n}\Big(\Theta_{m}-\frac 1 2 
+\frac{1}{2}\mu_{m,n}N_{m,n}\Big),
\end{align}
where $\pi_{m,n}$ and $\mu_{m,n}$ are some deterministic scales, and 
$N_{m,n}$ is a random variable arisen from the noises $(\e_i)$
between the times $m$ and $n$. Then we observe by a CLT argument 
that the distribution of $N_{m,n}$ is asymptotically close to normal. 
Finally, we observe that $\pi_{m,n}\mu_{m,n}\to \infty$ which, together with $\Theta_n\approx 1/2$, $\Theta_m\approx 1/2$ and $N_{m,n}$ being of a finite non-negligible order, makes~\eqref{app1} impossible.

\begin{proof}[Proof of Proposition~\ref{devp}]  
Denote 
\begin{align*}
\mathcal{H}_m=\big\{|\Theta_n-1/2|\le \delta_n\text{ for all }n\ge m\big\}.
\end{align*}
Since the events $\mathcal{H}_m$ are increasing 
it suffices to show that 
\begin{align*}
\P(\mathcal{H}_m)\to 0.
\end{align*}
%

By the mean value theorem we have 
\begin{align*}
\psi(x)=\psi'(\xi_x)\big(x-\frac{1}{2}\big)+\frac 1 2,
\end{align*}
where $\xi_x$ lies between $x$ and $1/2$. Hence 
\begin{align}
\label{ka1p}
\frac{\tau_n}{\tau_{n+1}}x+\frac{\sigma_{n+1}}{\tau_{n+1}}\psi(x)-\frac 1 2
&=\frac{\tau_n}{\tau_{n+1}} x+\frac{\sigma_{n+1}}{\tau_{n+1}}
\Big[\psi'(\xi_x)\big(x-\frac{1}{2}\big)+\frac 1 2\Big]-\frac 1 2
=\kappa_n(x) \big(x-\frac 1 2\big), 
\end{align}
where 
\begin{align*}
\kappa_n(x)=\frac{\tau_n}{\tau_{n+1}}+\frac{\sigma_{n+1}}{\tau_{n+1}}\psi'(\xi_x).
\end{align*}

It follows from~\eqref{iteration1} and~\eqref{def1th} that 
for all $m$ and $n\ge m$
\begin{align*}
\Theta_{n}-\frac 1 2 
&=\frac{\tau_{n-1}}{\tau_{n}} \Theta_{n-1}+\frac{\sigma_{n}}{\tau_{n}}\psi(\Theta_{n-1})-\frac 1 2+\e_{n}\frac{\sqrt{\sigma_{n}P_{n-1}(1-P_{n-1})}}{\tau_{n}} \\
&=\kappa_{n-1}(\Theta_{n-1})\Big(\Theta_{n-1}-\frac 1 2\Big)+\e_{n}\frac{\sqrt{\sigma_{n}P_{n-1}(1-P_{n-1})}}{\tau_{n}}.
\end{align*}
Iterating this procedure we get 
\begin{align}
\Theta_{n}-\frac 1 2
&=\Big[\prod_{j=m}^{n-1}\kappa_j(\Theta_{j})\Big] \Big(\Theta_{m}-\frac 1 2\Big)
+\sum_{k=m+1}^{n}\Big[\prod_{j=k}^{n-1}\kappa_j(\Theta_{j})\Big] \e_{k}\frac{\sqrt{\sigma_{k}P_{k-1}(1-P_{k-1})}}{\tau_{k}}\notag\\
&=\Big[\prod_{j=m}^{n-1}\kappa_j(\Theta_{j})\Big] \Big(\Theta_{m}-\frac 1 2
+\sum_{k=m+1}^n\Big[\prod_{j=m}^{k-1}\frac{1}{\kappa_j(\Theta_{j})}\Big] \e_{k}\frac{\sqrt{\sigma_{k}P_{k-1}(1-P_{k-1})}}{\tau_{k}}\Big).
\label{ka4p}
\end{align}

Observe that on the event $\mathcal{H}_{m}$
\begin{align}
\label{bobo}
\kappa_j(\Theta_j)=\frac{\tau_j}{\tau_{j+1}}+\frac{\sigma_{j+1}}{\tau_{j+1}}\Big[\psi'(1/2)+O(\delta_j)\Big]
=\frac{\tau_j}{\tau_{j+1}}+\frac{\sigma_{j+1}}{\tau_{j+1}}\alpha+\frac{\sigma_{j+1}}{\tau_{j+1}}O(\delta_j)
\end{align}
as $j\to\infty$ uniformly on the probability space. 
Denote 
\begin{align}
\label{rhorho}
\pi_{m,k}=\prod_{j=m}^{k-1}\frac{\tau_j+\alpha\sigma_{j+1}}{\tau_{j+1}}=\exp\Big\{\sum_{j=m+1}^{k}\log\Big(1+(\alpha-1)\frac{\sigma_j}{\tau_j}\Big)\Big\}.
\end{align}
It follows from~\eqref{bobo} that, as $m\to\infty$, uniformly in $\omega$ and $k\ge m$
\begin{align}
\prod_{j=m}^{k-1}\kappa_j(\Theta_{j})
=\pi_{m,k} \exp\Big\{\sum_{j=m}^{k-1}\log\Big(1+\frac{\sigma_{j+1}}{\tau_{j}+\alpha\sigma_{j+1}}O(\delta_j)\Big)\Big\}
&=\pi_{m,k} \big(1+o(1)\big),
\label{ka2p}
\end{align}
since by~\eqref{ser1} we have 
\begin{align*}
\sum_{j=m}^{k-1}\frac{\delta_j\sigma_{j+1}}{\tau_{j}+\alpha\sigma_{j+1}}
\le \sum_{j=m}^{\infty}\delta_j\, \frac{\sigma_{j+1}}{\tau_{j+1}}=o(1).
\end{align*}


Further, on $\mathcal{H}_{m}$ we have, as $m\to\infty$, uniformly in $\omega$ and $k\ge m$ 
\begin{align}
\label{ka3p}
\sqrt{P_{k-1}(1-P_{k-1})}=\sqrt{\psi(\Theta_{k-1})(1-\psi(\Theta_{k-1}))}=1/2+O(\delta_{k-1})=1/2+o(1).
\end{align}
Substituting~\eqref{ka2p} and~\eqref{ka3p} into~\eqref{ka4p} we obtain, as $m\to\infty$,  uniformly in $\omega$
and $n\ge m$
\begin{align*}
&\Theta_{n}-\frac 1 2
=(1+o(1))\pi_{m,n} \Big(\Theta_{m}-\frac 1 2
+\frac{1+o(1)}{2}\sum_{k=m+1}^n \e_{k}\frac{\sqrt{\sigma_{k}}}{\pi_{m,k}\tau_{k}}\Big).
\end{align*}

Let $(B^{\ssup 0}_{\ell})_{\ell\in\N}$ be a sequence of independent Binomial random variables with parameter $\frac 1 2$ and size~$\sigma_{\ell}$, and let, for all $\ell\in\N$, 
\begin{align*}
\e_{\ell}^{\ssup 0}=\frac{B^{\ssup 0}_{\ell}-\sigma_{\ell}/2}{\sqrt{\sigma_{\ell}/4}}.
\end{align*}
Denote 
\begin{align*}
\hat{\e}_{\ell}=\e_{\ell}\one\big\{|\Theta_{{\ell}-1}-1/2|<1/4\big\}+\e_{\ell}^{\ssup 0}\one\big\{|\Theta_{{\ell}-1}-1/2|\ge 1/4\big\}.
\end{align*}
On the event $\mathcal{H}_{m}$ we have, as $m\to\infty$,  uniformly in $\omega$
and $n\ge m$
\begin{align}
\Theta_{n}-\frac 1 2
&=(1+o(1))\pi_{m,n} 
 \Big(\Theta_{m}-\frac 1 2
+\frac{1+o(1)}{2}\sum_{k=m+1}^n\hat{\e}_{k}\frac{\sqrt{\sigma_{k}}}{\pi_{m,k}\tau_{k}} \Big).
\label{ka7p}
\end{align}
\smallskip

It is easy to see that for a Binomial random variable $\text{Bin}(p,n)$ we have 
\begin{align*}
\E \exp\Big\{it\frac{\text{\rm Bin}(p,n)-np}{\sqrt{np(1-p)}}\Big\}
&=\exp\Big\{n\log\Big(1-p+p\exp\Big\{\frac{it}{\sqrt{np(1-p)}}\Big\}\Big)-\frac{it\sqrt{np}}{\sqrt{1-p}}\Big\}\\
&=\exp\Big\{n\log\Big(1+\frac{it\sqrt{p}}{\sqrt{n(1-p)}}-
\frac{t^2 }{2n(1-p)}+O\big(\frac{t^3}{n\sqrt n}\big)\Big)-\frac{it\sqrt{np}}{\sqrt{1-p}}\Big\}\\
&=\exp\Big\{-
\frac{t^2 }{2}+nO\big(\frac{t^3}{n\sqrt n}\big)\Big\}
\end{align*}
as $n\to\infty$, uniformly in ${\frac 1 4<p<\frac 3 4}$ and $t$. Hence 
\begin{align}
\label{ka9p}
\E_{\mathcal{F}_{\ell-1}}e^{it\hat{\e}_{\ell}}&=\exp\Big\{-
\frac{t^2 }{2}+\sigma_{\ell}O\big(\frac{t^3}{\sigma_{\ell}\sqrt{\sigma_{\ell}}}\big)\Big\}
\end{align}
as $\ell\to\infty$, uniformly in $\omega$ and $t$. 
Denote
\begin{align}
\label{mumu}
\mu_{m,n}=\Big[\sum_{k=m+1}^n\frac{\sigma_{k}}{\pi^2_{m,k}\tau^2_{k}}\Big]^{1/2}.
\end{align}
Using~\eqref{ka9p} we have for each fixed $t$, as $m\to\infty$, uniformly in $\omega$ and $n\ge m$,
\begin{align}
\label{ka6p}
\E_{\mathcal{F}_m}\exp\Big\{\frac{it}{\mu_{m,n}}\sum_{k=m+1}^n\hat{\e}_{k}\frac{\sqrt{\sigma_{k}}}{\pi_{m,k}\tau_{k}}\Big\}
&=\exp\Big\{-\frac{t^2}{2}+O(1)\frac{1}{\mu_{m,n}^3}\sum_{k=m+1}^n\frac{\sigma_{k}}{\pi^3_{m,k}\tau^3_{k}}\Big\}.
\end{align}
Let us show that the term next to $O(1)$ tends to zero as $m\to\infty$. We need to consider two
cases:  when  $(\sigma_i)$ tends to infinity and when it is bounded. 
\smallskip

First, assume that $\sigma_i\to \infty$. Observe that since $(x_1+\cdots+x_m)^{\frac 3 2}\ge x_1^{\frac 3 2}+\cdots+x_m^{\frac 3 2}$ for all $m$ and all non-negative $x_1,\dots,x_m$, we have 
\begin{align*}
\mu^3_{m,n}\ge \sum_{k=m+1}^n\frac{\sigma^{3/2}_{k}}{\pi^3_{m,k}\tau^3_{k}}
\ge \min_{m<k\le n}\sqrt{\sigma_{k}}\,\sum_{k=m+1}^n\frac{\sigma_{k}}{\pi^3_{m,k}\tau^3_{k}}
\end{align*}
and hence 
\begin{align*}
\frac{1}{\mu_{m,n}^3}\sum_{k=m+1}^n\frac{\sigma_{k}}{\pi^3_{m,k}\tau^3_{k}}\le \max_{m<k\le n}\frac{1}{\sqrt{\sigma_{k}}}\to 0
\end{align*}
as $m\to\infty$ uniformly in $n$.  
\smallskip

Second, assume that $\sigma_i$ is bounded by a constant $\sigma$. Hence $i\le \tau_i\le \sigma (i+1)\le 2\sigma i$ for all $i$. We have 
\begin{align}
\pi_{m,k}
&\le \exp\Big\{\sum_{j=m+1}^{k}\log\Big(1+(\alpha-1)\frac{\sigma}{j}\Big)\Big\}
= \exp\Big\{\big[(\alpha-1)\sigma+o(1)\big]\sum_{j=m+1}^k\frac 1 j\Big\}\notag\\
&= \exp\Big\{\big[(\alpha-1)\sigma+o(1)\big]\big(\log k-\log m+o(1)\big)\Big\}=\frac{k^{(\alpha-1)\sigma+o(1)}}{m^{(\alpha-1)\sigma+o(1)}}.
\label{upbo}
\end{align}
and, similarly, 
\begin{align}
\label{lobo}
\pi_{m,k}
&\ge\frac{k^{\frac{\alpha-1}{\sigma}+o(1)}}{m^{\frac{\alpha-1}{\sigma}+o(1)}}.
\end{align}
as $m\to\infty$ uniformly in $k\ge m$. 
Observe that for all $\gamma>1$
\begin{align*}
\sum_{k=m+1}^n\frac{1}{k^{\gamma+o(1)}}=\frac{1}{m^{\gamma-1+o(1)}}
\end{align*}
as $m\to\infty$ uniformly in $n\ge m^2$. 
Using this we obtain by~\eqref{upbo}
\begin{align}
\label{mumum}
\mu_{m,n}^3\ge\frac{m^{3(\alpha-1)\sigma+o(1)}}{8\sigma^3}\Big[\sum_{k=m+1}^n 
\frac{1}{k^{2(\alpha-1)\sigma+2+o(1)}}\Big]^{3/2}
=m^{3(\alpha-1)\sigma-\frac 3 2 (2(\alpha-1)\sigma+1)+o(1)}=m^{-\frac 3 2 +o(1)}
\end{align}
and by~\eqref{lobo}
\begin{align}
\label{mumumu}
\sum_{k=m+1}^n\frac{\sigma_{k}}{\pi^3_{m,k}\tau^3_{k}}
&\le \sigma m^{\frac{3(\alpha-1)}{\sigma}+o(1)}\sum_{k=m+1}^n \frac{1}{k^{\frac{3(\alpha-1)}{\sigma}+3+o(1)}}
=m^{-2+o(1)}
\end{align}
as $m\to\infty$ uniformly in $n\ge m^2$. 
Combining~\eqref{mumum} and~\eqref{mumumu} we obtain 
\begin{align*}
\frac{1}{\mu_{m,n}^3}\sum_{k=m+1}^n\frac{\sigma_{k}}{\pi^3_{m,k}\tau^3_{k}}=m^{-1/2+o(1)}\to 0
\end{align*}
as $m\to\infty$ uniformly in $n\ge m^2$.  
\smallskip

Let $(n_m)$ be an $\N$-valued sequence satisfying $n_m\ge m^2$. We will need $(n_m)$ to grow sufficiently fast but will specify 
this condition later.
It follows from~\eqref{ka7p} and~\eqref{ka6p} that on the event $\mathcal{H}_m$
\begin{align*}
\Theta_{n_m}-\frac 1 2
&=(1+o(1))\pi_{m,n_m} \Big(\Theta_{m}-\frac 1 2
+\frac{1+o(1)}{2}\,\mu_{m,n_m} N_{m,n_m}\Big),
\end{align*}
where 
$N_{m,n_m}$ is a random variable, which conditionally on $\Theta_m$ converges weakly to a standard normal random variable $N$. Hence 
\begin{align*}
\P(\mathcal{H}_m)\le \P\Big((1+o(1))\pi_{m,n_m} \Big(\Theta_{m}-\frac 1 2
+\frac{1+o(1)}{2}\,\mu_{m,n_m} N_{m,n_m}\Big)\in [-\delta_{n_m},\delta_{n_m}]\Big)\to 0
\end{align*}
as required if 
\begin{align}
\label{fin}
\frac{\pi_{m,n_m}\mu_{m,n_m}}{\delta_{n_m}}\to \infty.
\end{align}

Finally, to prove~\eqref{fin}, we estimate the sum in~\eqref{mumu} by the first term and obtain using $\sigma_{m+1}\ge 1$, $\pi_{m.m+1}\le \alpha$, and~\eqref{rhorho}
\begin{align*}
\pi_{m,n_m}\mu_{m,n_m}
\ge \pi_{m,n_m}\,\frac{\sqrt{\sigma_{m+1}}}{\pi_{m,m+1}\tau_{m+1}}
\ge\frac{1}{\alpha\tau_{m+1}}\exp\Big\{\sum_{j=m+1}^{n_m}\log\Big(1+(\alpha-1)\frac{\sigma_j}{\tau_j}\Big)\Big\}.
\end{align*}
The series 
\begin{align*}
\sum_{j=0}^{\infty} \log\Big(1+(\alpha-1)\frac{\sigma_j}{\tau_j}\Big)=\infty
\end{align*}
diverges by Lemma~\ref{1101series} and hence we can choose $(n_m)$ to grow sufficiently fast to guarantee  
\begin{align*}
\pi_{m,n_m}\mu_{m,n_m}\to \infty,
\end{align*}
which implies~\eqref{fin}.
\end{proof}

\bigskip



\section{Dominance}

\label{s:dom}

The aim of this section is to prove almost sure dominance in the positive  feedback scenario. We will rely on Proposition~\ref{devp} for showing that the proportion $\Theta_n$ of the balls in the first bin does not get stuck at the equilibrium. To do so we need to pick the sequence of deviations $(\delta_n)$ satisfying the assumption~\eqref{ser1}. Let 
\begin{align}
\label{d:deltan1}
\delta_n=\frac{1}{\log^2\tau_n},\quad n\in\N_0.
\end{align}

\begin{lemma} 
\label{1101m}
Suppose {\rm (R)} is satisfied. Then
\begin{align}
\label{tmp1}
\sum_{n=0}^{\infty}\delta_n\frac{\sigma_{n+1}}{\tau_{n+1}}<\infty.
\end{align}
\end{lemma}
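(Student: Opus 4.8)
The plan is to set $L_n=\log\tau_n$ and reduce \eqref{tmp1} to a telescoping estimate for $\sum_n\frac{L_{n+1}-L_n}{L_n^2}$, handling the two alternatives in {\rm (R)} separately. First I would record the elementary facts that $(\tau_n)$ is strictly increasing with $\tau_n\to\infty$ (since $\sigma_n\ge 1$), so $(L_n)$ is strictly increasing to infinity; in particular there is $n_0$ with $\tau_{n_0}\ge 2$, hence $L_n\ge\log 2$ for $n\ge n_0$, and it suffices to bound the tail sum from $n_0$ on. The single inequality that does the work is
\begin{align*}
\frac{\sigma_{n+1}}{\tau_{n+1}}=1-\frac{\tau_n}{\tau_{n+1}}\le\log\frac{\tau_{n+1}}{\tau_n}=L_{n+1}-L_n,
\end{align*}
together with the trivial bound $\sigma_{n+1}/\tau_{n+1}<1$; note also $\sigma_{n+1}/\tau_{n+1}=\rho_n/(1+\rho_n)$ and $L_{n+1}-L_n=\log(1+\rho_n)$, so that $L_{n+1}/L_n=1+\log(1+\rho_n)/L_n$.

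In the case when $(\rho_n)$ is bounded, say $\rho_n\le\rho$, the point is that $\sum_n\delta_n$ itself may diverge (e.g.\ when $\tau_n$ grows polynomially), so the factor $\sigma_{n+1}/\tau_{n+1}$ must be used. For $n\ge n_0$ one has $L_{n+1}/L_n\le 1+\log(1+\rho)/\log 2=:K$, and since $\frac{1}{L_n}-\frac{1}{L_{n+1}}=\frac{L_{n+1}-L_n}{L_nL_{n+1}}$,
\begin{align*}
\sum_{n\ge n_0}\delta_n\frac{\sigma_{n+1}}{\tau_{n+1}}\le\sum_{n\ge n_0}\frac{L_{n+1}-L_n}{L_n^2}=\sum_{n\ge n_0}\frac{L_{n+1}}{L_n}\Big(\frac{1}{L_n}-\frac{1}{L_{n+1}}\Big)\le K\sum_{n\ge n_0}\Big(\frac{1}{L_n}-\frac{1}{L_{n+1}}\Big)\le\frac{K}{L_{n_0}}<\infty.
\end{align*}
In the case $\rho_n\to\infty$, pick $N\ge n_0$ with $\rho_n\ge 1$ for $n\ge N$; then $L_{n+1}-L_n=\log(1+\rho_n)\ge\log 2$ for $n\ge N$, so $L_n\ge(n-N)\log 2$ grows at least linearly, whence, using $\sigma_{n+1}/\tau_{n+1}<1$,
\begin{align*}
\sum_{n\ge N}\delta_n\frac{\sigma_{n+1}}{\tau_{n+1}}\le\sum_{n\ge N}\delta_n=\sum_{n\ge N}\frac{1}{L_n^2}<\infty.
\end{align*}
Adding the finitely many remaining terms gives \eqref{tmp1} in both cases, and {\rm (R)} is precisely what guarantees that one of these two scenarios applies.

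I do not expect a serious obstacle: the only substantive observation is that in the bounded-$\rho_n$ regime the extra factor $\rho_n/(1+\rho_n)$ cannot be discarded, and the comparison of $\frac{L_{n+1}-L_n}{L_n^2}$ with the telescoping quantity $\frac{1}{L_n}-\frac{1}{L_{n+1}}$ (legitimate because $L_{n+1}/L_n$ stays bounded) is the natural way to absorb it. The minor technical care needed is to start the summation at an index $n_0$ for which $\tau_{n_0}\ge 2$, so that $L_n$ is bounded away from $0$.
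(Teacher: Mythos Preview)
Your proposal is correct and follows essentially the same approach as the paper: both arguments split into the two alternatives of {\rm (R)}, in the bounded-$\rho_n$ case use that $\log\tau_{n+1}/\log\tau_n$ stays bounded to compare with a convergent expression (you via the telescoping sum $\sum(1/L_n-1/L_{n+1})$, the paper via the integral $\int dx/(x\log^2 x)$, which is the same thing after substituting $u=\log x$), and in the case $\rho_n\to\infty$ use that $L_n$ grows at least linearly so that $\sum 1/L_n^2$ converges and the crude bound $\sigma_{n+1}/\tau_{n+1}\le 1$ suffices.
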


\begin{proof} First, suppose $(\rho_n)$ is bounded. We have 
\begin{align*}
\log \tau_{n+1}=\log\tau_{n}+\log (1+\rho_{n})\sim \log\tau_{n}
\end{align*}
as $n\to\infty$, and the convergence of the above series follows from 
\begin{align*}
\sum_{n=0}^{\infty}\frac{\sigma_{n+1}}{\tau_{n+1}\log^2\tau_{n+1}}\le \int_{\tau_0}^{\infty}\frac{dx}{x\log^2x}<\infty.
\end{align*}

Second, suppose $\rho_n\to\infty$. Then there exists $m\in \N$ such that for all $n\ge m$ we have $\rho_n\ge 2$
and hence $\tau_{n}\ge \sigma_{n}\ge 2\tau_{n-1}\ge\cdots\ge 2^{n-m}\tau_m$. Together with $\sigma_{n+1}\le \tau_{n+1}$ this implies 
\begin{align*}
\delta_n\frac{\sigma_{n+1}}{\tau_{n+1}}\le \frac{1}{\log^2 \tau_{n}}\le \frac{1}{((n-m)\log 2+\log\tau_m)^2}
\sim \frac{1}{n^2\log^22}
\end{align*}
as $n\to\infty$ implying convergence of the series~\eqref{tmp1}. 
\end{proof}
\smallskip

Now we are ready to prove dominance. Our strategy will be as follows. First we pick a time when $\Theta_n$ deviates from the equilibrium. 
Then we observe that if that time is large enough then the 
future fluctuations of the martingale part of $\Theta_n$ will be small with high probability, and will keep $\Theta_n$ away from the equilibrium. At the same time, the bias caused by the positive feedback will move $\Theta_n$ away from the equilibrium, and its power will be sufficient to bring it to zero or one, respectively.   

\begin{proof}[Proof of Theorem~\ref{main0}]
%

\smallskip

Fix $r\ge e^4$ and let 
\begin{align*}
\eta=\inf\big\{n\ge r: |\Theta_n-1/2|>\delta_n\big\}
\end{align*}
be the first time $\Theta_n$ significantly deviates from the equilibrium after time $r$. Observe that $\eta$
is finite almost surely by Proposition~\ref{devp} and Lemma~\ref{1101m}. Due to symmetry it suffices to consider the event 
\begin{align}
\label{lo3}
\mathcal{E}=\big\{\Theta_{\eta}<1/2-\delta_{\eta}\big\}.
\end{align}
and show that on $\Theta_n\to 0$ on $\mathcal{E}$.
\smallskip

For each $n\in\N_0$ we have by~\eqref{def1th}, \eqref{def1t}, and~\eqref{defp}
\begin{align*}
\Theta_{n+1}=\frac{\tau_n}{\tau_{n+1}}\Theta_n+\frac{1}{\tau_{n+1}}B_{n+1}=\Theta_n+\frac{B_{n+1}-\sigma_{n+1}P_n}{\tau_{n+1}}
-\frac{\sigma_{n+1}}{\tau_{n+1}}\big(\Theta_n-\psi(\Theta_n)\big).
\end{align*}
Hence for each $k\ge \eta$ we have
\begin{align}
\label{lo2}
\Theta_{\eta+n}
=\Theta_{\eta}+M_n-R_n,
\end{align}
where, for each $n\in\N_0$, 
\begin{align*}
M_n=\sum_{k=\eta+1}^{\eta+n}\frac{B_{k}-\sigma_{k}P_{k-1}}{\tau_k}
\qquad\text{and}\qquad
R_n=\sum_{k=\eta+1}^{\eta+n}\frac{\sigma_{k}}{\tau_{k}}\big(\Theta_{k-1}-\psi(\Theta_{k-1})\big).
\end{align*}
It is easy to see that $(M_n)$ is a martingale with respect to the filtration $(\mathcal{F}_{\eta+n})$. Moreover, it is bounded in $L^2$ as for all $n$
\begin{align}
\label{bb1}
\E_{\mathcal{F}(\eta)} M_n^2
&=\E_{\mathcal{F}(\eta)} M_{n-1}^2+
\E_{\mathcal{F}(\eta)}\Big[\frac{\sigma_{\eta+n}}{\tau_{\eta+n}^2}
P_{\eta+n-1}(1-P_{\eta+n-1})\Big]\notag\\
&\le \E_{\mathcal{F}(\eta)} M_{n-1}^2+\frac{\sigma_{\eta+n}}{\tau_{\eta+n}^2}
\le\cdots\le \sum_{k=\eta+1}^{\eta+n}\frac{\sigma_{k}}{\tau^2_{k}}\le\int_{\tau_{\eta}}^{\infty}\frac{dx}{x^2}=1/\tau_{\eta}.
\end{align} 
Hence $(M_n)$ converges almost surely conditionally on $\mathcal{F}_{\eta}$. 
Denote 
\begin{align*}
\mathcal{S}=\Big\{\sup_{n\in\N}M_n\le  \frac{\delta_{\eta}}{2}\Big\}.
\end{align*}
By Doob's submartingale inequality we have using~\eqref{bb1}
\begin{align*}
\P_{\mathcal{F}(\eta)}\Big(\max_{1\le k\le n}M_k> \frac{\delta_{\eta}}{2}\Big)
&\le \P_{\mathcal{F}(\eta)}\Big(\max_{1\le k\le n} M^2_k> \frac{\delta_{\eta}^2}{4}\Big)
\le \frac{4}{\delta_{\eta}^2} \E_{\mathcal{F}(\eta)} M_n^2
\le \frac{4\log^4\tau_{\eta}}{\tau_{\eta}}
\le\frac{4\log^4\tau_{r}}{\tau_{r}},
\end{align*}
as the function $x\mapsto \frac{\log^4x}{x}$ is decreasing on $[e^4,\infty)$ and $r\ge e^4$. This implies
\begin{align}
\label{lo5}
\P(\mathcal{S}^c)
\le\frac{4\log^4\tau_{r}}{\tau_{r}}.
\end{align}

Let us prove by induction that 
\begin{align}
\label{bb2}
\Theta_{\eta+n}<\frac 1 2-\frac{\delta_{\eta}}{2} 
\end{align}
for all $n\in\N_0$ on the event $\mathcal{S}\cap \mathcal{E}$. 
Indeed, for $n=0$ it follows from~\eqref{lo3}. Suppose it is true for all indices between $0$ and $n-1$. Since $x\ge \psi(x)$ for all $x\in [0,1/2]$ we have $\Theta_{k-1}\ge \psi(\Theta_{k-1})$
for all $\eta+1\le k\le \eta+n$ and hence $R_n\ge 0$. By~\eqref{lo2} this implies 
\begin{align*}
\Theta_{\eta+n}\le \Theta_{\eta}+M_n<\frac 1 2-\delta_{\eta}+\frac{\delta_{\eta}}{2}=\frac 1 2-\frac{\delta_{\eta}}{2}
\end{align*}
as required. 
\smallskip

Observe that~\eqref{bb2} and $x\ge \psi(x)$ for all $x\in [0,1/2]$ imply that $(R_n)$ is increasing on $\mathcal{S}\cap \mathcal{E}$.  Since $(M_n)$ converges we obtain by~\eqref{lo2} that $(\Theta_n)$
converges on $\mathcal{S}\cap \mathcal{E}$. Let us show that on $\mathcal{S}\cap \mathcal{E}$
\begin{align}
\label{lili}
\lim_{n\to\infty} \Theta_{n}=0.
\end{align}
Indeed, if the limit $\Theta$ is positive for some $\omega\in\mathcal{S}\cap \mathcal{E}$ then $\Theta>\psi(\Theta)$ and 
\begin{align*}
\frac{\sigma_{k}}{\tau_{k}}\big[\Theta_{k-1}-\psi(\Theta_{k-1})\big]\sim \frac{\sigma_{k}}{\tau_{k}}\big[\Theta-\psi(\Theta)\big], 
\end{align*}
implying 
\begin{align*}
\sum_{k=\eta+1}^{\infty}\frac{\sigma_{k}}{\tau_{k}}\big[\Theta_{k-1}-\psi(\Theta_{k-1})\big]=\infty
\end{align*}
by Lemma~\ref{1101series}. This means that 
$R_n\to\infty$, which by~\eqref{lo2} implies $\Theta= -\infty$, which is clearly impossible.
\smallskip
 
Since $r$ is arbitrary, it follows from~\eqref{lo5} that~\eqref{lili} holds on $\mathcal{E}$ almost surely. 
\end{proof}
\bigskip




\section{Some properties of $(\sigma_n)$ and $(\e_n)$}

\label{s:pro}


In this section we collect some elementary results about 
the sequence of sample sizes $(\sigma_n)$ and the sequence of random 
noises $(\e_n)$ that will be later used for the subcritical regime.
\smallskip

For each $n\in\N$, denote 
\begin{align}
\label{d:lambdan}
\lambda_n=\frac{\sigma_{n+1}\sigma_{n-1}^{\alpha}}{\sigma_n^{\alpha+1}}.
\end{align}
Observe that if $\rho_n\to\infty$ we have 
\begin{align}
\label{st}
\lim_{n\to\infty}\frac{\tau_n}{\sigma_n}=\lim_{n\to\infty}\Big(\frac{\tau_{n-1}}{\sigma_n}+1\Big)=1. 
\end{align}


The following lemma explains the nature of $\lambda$
and shows that for regular enough sequences it is always equal to zero
in the subcritical regime.

\begin{lemma}
\label{l:regular}
If $\rho_n\to\infty$, $\theta=0$, and 
$\lambda_n\to \lambda$ then $\lambda=0$.  
\end{lemma}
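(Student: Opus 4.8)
The plan is to show that the hypotheses $\rho_n\to\infty$, $\theta=0$ and $\lambda_n\to\lambda$ force $\lambda=0$ by extracting from $\theta=0$ enough control on the growth of $\log\sigma_n$ to prove that $\log\lambda_n\to -\infty$. The starting point is to note that, since $\rho_n\to\infty$, we have $\tau_n/\sigma_n\to 1$ by~\eqref{st}, so $\theta=0$ is equivalent to $\alpha^{-n}\log\sigma_n\to 0$. Writing $u_n=\log\sigma_n$, this says $u_n=o(\alpha^n)$, and by~\eqref{d:lambdan} we have
\begin{align*}
\log\lambda_n=u_{n+1}-(\alpha+1)u_n+\alpha u_{n-1}=(u_{n+1}-\alpha u_n)-(u_n-\alpha u_{n-1}).
\end{align*}
Setting $v_n=u_{n+1}-\alpha u_n$ (the ``$\alpha$-discrete derivative'' of $(u_n)$), the claim $\lambda_n\to\lambda$ becomes $v_n-v_{n-1}\to\log\lambda$, so it suffices to prove $\log\lambda\le 0$, and then — since the only way a sequence of increments $v_n-v_{n-1}$ can converge to a positive constant while $v_n$ stays $o(\alpha^n)$ would be obstructed — to rule out $\log\lambda>0$.

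First I would dispose of the case $\lambda=\infty$ separately: if $\log\lambda_n\to\infty$ then in particular $v_n-v_{n-1}\to\infty$, so $v_n\to\infty$ and eventually $v_n-v_{n-1}\ge 1$, hence $v_n\ge n/2$ for large $n$; this means $u_{n+1}\ge\alpha u_n+n/2$, which iterates to $u_n\ge c\alpha^n$ for some $c>0$ once $u_n$ itself has become positive (note $\sigma_n\to\infty$ so $u_n\to\infty$), contradicting $u_n=o(\alpha^n)$. So assume $\lambda\in(0,\infty)$ and put $\ell=\log\lambda$; I want a contradiction from $\ell>0$. From $v_n-v_{n-1}\to\ell$ we get $v_n=\ell n+o(n)$, and then summing (or using a Cesàro/telescoping estimate on $u_{n+1}=\alpha u_n+v_n$) one obtains
\begin{align*}
\alpha^{-n}u_n=\alpha^{-m}u_m+\sum_{k=m}^{n-1}\alpha^{-(k+1)}v_k\longrightarrow \text{(a finite limit)}+\sum_{k\ge m}\alpha^{-(k+1)}v_k,
\end{align*}
and since $v_k=\ell k+o(k)$ with $\ell>0$ the tail series $\sum\alpha^{-(k+1)}v_k$ converges to a \emph{strictly positive} number, forcing $\liminf\alpha^{-n}u_n>0$ and contradicting $\theta=0$. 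This pins $\ell\le 0$; but if $\ell<0$ then $\lambda_n\to\lambda<1$ and there is nothing to prove beyond the statement — wait, the lemma asserts $\lambda=0$ exactly, so I must also exclude $0<\lambda<1$. For that I would use the reverse direction: if $\log\lambda_n\to\ell$ with $\ell<0$ then $v_n-v_{n-1}\to\ell<0$, so $v_n\to-\infty$ linearly, giving $u_{n+1}-\alpha u_n\to-\infty$; since $\alpha>1$ this actually forces $u_n$ to be eventually decreasing and hence bounded above, so $\sigma_n$ is bounded — but we assumed $\rho_n=\sigma_{n+1}/\tau_n\to\infty$ with $\tau_n\to\infty$, which is impossible. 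Hence the only surviving possibility is $\ell=-\infty$, i.e. $\log\lambda_n\to-\infty$, i.e. $\lambda=0$.

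The main obstacle I anticipate is making the passage from ``increments of $v_n$ converge'' to ``$\alpha^{-n}u_n$ has a positive liminf'' fully rigorous in the regime $0<\lambda<\infty$: one has to control the $o(n)$ error terms uniformly enough that the discounted sum $\sum_k\alpha^{-k}v_k$ genuinely inherits a positive lower bound rather than being swamped by oscillations, and one must be careful that $u_n$ may be negative for small $n$ (if $\sigma_n<1$ is allowed, though here $(\sigma_n)$ is a positive sequence with $\rho_n\to\infty$ so $\sigma_n\to\infty$ and eventually $u_n>0$). A clean way around the delicacy is to avoid summation entirely: from $v_n=u_{n+1}-\alpha u_n$ and $\theta=0$ one directly shows $v_n=o(\alpha^n)$, hence $v_n-v_{n-1}=o(\alpha^n)$; combined with $v_n-v_{n-1}\to\log\lambda$ this is automatically consistent with any finite $\log\lambda$, so the real leverage must come from iterating $v_n\ge v_{n-1}+\log\lambda-\epsilon$ downward in $n$ to get $v_m\le v_n-(n-m)(\log\lambda-\epsilon)$ and choosing $n$ comparable to $m$ — this is the step where I would spend the most care, and it is essentially the ``$\lambda<1$'' half already needed, so the two halves reinforce each other and only $\lambda=0$ remains.
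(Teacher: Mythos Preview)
Your argument correctly handles $\lambda>1$ and $\lambda=\infty$, but the step for $0<\lambda<1$ contains a genuine error, and $\lambda=1$ is not addressed at all. You claim that $u_{n+1}-\alpha u_n\to-\infty$ forces $(u_n)$ to be eventually decreasing and hence bounded; this is false. For instance take $u_n=cn$ with any $c>0$: then $u_{n+1}-\alpha u_n=c-c(\alpha-1)n\to-\infty$ while $(u_n)$ is strictly increasing and unbounded. In fact, under your own telescoping identity $u_n=-\sum_{k\ge n}\alpha^{n-k-1}v_k$ together with $v_k=\ell k+o(k)$, one finds $u_n=\frac{|\ell|}{\alpha-1}\,n+o(n)$ when $\ell<0$, so $\sigma_n$ grows and is certainly not bounded. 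Thus neither half of your dichotomy --- ``$\ell>0$ contradicts $\theta=0$'' versus ``$\ell<0$ contradicts $\rho_n\to\infty$'' --- suffices to exclude $\lambda\in(0,1]$, and your closing remarks do not repair this.

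The paper's proof avoids the difficulty by iterating on $\rho_n$ rather than on $\sigma_n$. Since $\tau_n/\sigma_n\to1$ one has $\rho_n/\rho_{n-1}^\alpha\to\lambda$ as well. For \emph{any} $\lambda>0$ pick $\varepsilon\in(0,\lambda)$ and --- this is the key use of the hypothesis $\rho_n\to\infty$ --- choose $k$ with $\varepsilon^{1/(\alpha-1)}\rho_k>1$. Iterating $\rho_n>\varepsilon\rho_{n-1}^\alpha$ from this $k$ gives
\[
\rho_n>\varepsilon^{(\alpha^{n-k}-1)/(\alpha-1)}\rho_k^{\alpha^{n-k}}=\big(\varepsilon^{1/(\alpha-1)}\rho_k\big)^{\alpha^{n-k}}\varepsilon^{-1/(\alpha-1)},
\]
so $\alpha^{-n}\log\tau_{n+1}\ge\alpha^{-n}\log\rho_n$ is bounded below by a positive constant, contradicting $\theta=0$. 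The point your approach misses is that $\rho_n\to\infty$ (not merely $\sigma_n\to\infty$) lets the iteration get off the ground even when $\lambda<1$: the large starting value $\rho_k$ compensates for the small multiplicative factor $\varepsilon$. In your variables this amounts to iterating the recursion for $w_n=\log\rho_n\approx u_{n+1}-u_n$, which satisfies $w_n-\alpha w_{n-1}\to\log\lambda$ \emph{and} $w_n\to\infty$; the second fact is exactly what your choice of $v_n=u_{n+1}-\alpha u_n$ lacks.
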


\begin{proof} Suppose $\lambda\in (0,\infty]$. 
It follows from~\eqref{st} that 
\begin{align}
\label{r1}
\lim_{n\to\infty}\frac{\rho_n}{\rho_{n-1}^{\alpha}}
=\lim_{n\to\infty}\frac{\sigma_{n+1}\tau_{n-1}^{\alpha}}{\tau_n\sigma_n^{\alpha}}
=\lim_{n\to\infty}\lambda_n=\lambda.
\end{align}

Let $\e\in (0,\lambda)$
and let $k\in \N$ be such that $\e^{\frac{1}{\alpha-1}}\rho_k>1$ and, for all $n> k$, 
$$\rho_n>\e\rho_{n-1}^{\alpha},$$
which is possible by~\eqref{r1}. Iterating, we have for all $n>k$
\begin{align*}
\rho_n>\e^{1+\alpha+\cdots+\alpha^{n-k-1}}\rho_k^{\alpha^{n-k}}=\e^{\frac{\alpha^{n-k}-1}{\alpha-1}}\rho_k^{\alpha^{n-k}}.
\end{align*}
Using $\tau_{n+1}\ge\sigma_{n+1}=\tau_n\rho_n\ge \rho_n$ 
we obtain 
\begin{align*}
0=\alpha\theta=\lim_{n\to\infty}\alpha^{-n}\log\tau_{n+1}
\ge \lim_{n\to\infty}\alpha^{-n}\log\rho_{n}
\ge \alpha^{-k}\log\big(\e^{\frac{1}{\alpha-1}}\rho_k\big)>0
\end{align*}
leading to a contradiction. 
\end{proof}

Recall from Lemma~\ref{diverges} that the series~\eqref{div7} plays an important r\^ ole for non-occurrence of monopoly. In the following lemma we explore the behaviour of that series in various cases of the subcritical regime. 

\begin{lemma} 
\label{omegazero}
If $(\rho_n)$ is bounded then $\theta=0$ and there is $c>0$ such that for all $m\in\N_0$
\begin{align*}
\sum_{n=m}^{\infty}\frac{\sigma_{n+1}}{\tau_n^{\alpha}}\le \frac{c}{\tau_m^{\alpha-1}}.
\end{align*} 
If $\rho_n\to\infty$, $\theta=0$, and $\lambda<1$ then
\begin{align}
\label{sercon}
\sum_{n=0}^{\infty}\frac{\sigma_{n+1}}{\tau_n^{\alpha}}<\infty.
\end{align} 
If $\rho_n\to\infty$, $\theta=0$, and $\lambda>1$ then
\begin{align}
\label{sercon1}
\sum_{n=0}^{\infty}\frac{\sigma_{n+1}}{\tau_n^{\alpha}}=\infty.
\end{align} 
\end{lemma}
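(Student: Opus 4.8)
The three claims are naturally handled in turn, each reducing to an elementary estimate of the series $\sum \sigma_{n+1}/\tau_n^\alpha$ in terms of the growth of $(\tau_n)$ or $(\rho_n)$. For the first claim, suppose $\rho_n\le C$ for all $n$. Then $\tau_{n+1}=\tau_n(1+\rho_n)\le (1+C)\tau_n$, so $(\tau_n)$ grows at most geometrically, which immediately gives $\theta=\lim\alpha^{-n}\log\tau_n=0$. For the tail bound, write $\sigma_{n+1}/\tau_n^\alpha=\rho_n\,\tau_n^{1-\alpha}\le C\,\tau_n^{-(\alpha-1)}$; since $\tau_n\ge\tau_m(1+\rho_m)^{\,\cdot}$ grows at least... more carefully, I would use that $\tau_{n}\ge\tau_{n-1}+1\ge\dots$, but better: $\tau_{n+1}\ge\tau_n$ and the increments $\sigma_{n+1}$ are comparable to $\tau_n$ only from below if $\rho_n$ is bounded below, which is \emph{not} assumed. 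The clean route is to compare with an integral: since $\tau_{n+1}-\tau_n=\sigma_{n+1}$, we have $\sigma_{n+1}/\tau_n^\alpha\le \int_{\tau_n}^{\tau_{n+1}}\frac{dx}{x^{\alpha}}\cdot\frac{\tau_{n+1}^{\alpha}}{\tau_n^{\alpha}}\le (1+C)^\alpha\int_{\tau_n}^{\tau_{n+1}}x^{-\alpha}\,dx$, and summing over $n\ge m$ telescopes to $(1+C)^\alpha(\alpha-1)^{-1}\tau_m^{-(\alpha-1)}$, giving the bound with $c=(1+C)^\alpha/(\alpha-1)$.

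For the second claim, assume $\rho_n\to\infty$, $\theta=0$, and $\lambda<1$. By~\eqref{st}, $\tau_n/\sigma_n\to1$, and as in~\eqref{r1} in Lemma~\ref{l:regular}, $\rho_n/\rho_{n-1}^\alpha=\lambda_n(1+o(1))$, so $\limsup_n \rho_n/\rho_{n-1}^\alpha=\lambda<1$. Pick $q$ with $\lambda<q<1$; then for $n$ large, $\rho_n\le q\,\rho_{n-1}^\alpha$, and since also $\rho_{n-1}\to\infty$ we in fact get $\rho_n/\rho_{n-1}\le q\rho_{n-1}^{\alpha-1}$... that goes the wrong way. The right use: $\sigma_{n+1}/\tau_n^\alpha=\rho_n\tau_n^{1-\alpha}\sim\rho_n\sigma_n^{1-\alpha}$; and $\rho_n\sigma_n^{1-\alpha}=\rho_n(\tau_{n-1}\rho_{n-1})^{1-\alpha}\sim\rho_n\rho_{n-1}^{1-\alpha}\sigma_{n-1}^{1-\alpha}\le q\,\rho_{n-1}^{\alpha}\rho_{n-1}^{1-\alpha}\sigma_{n-1}^{1-\alpha}=q\,\rho_{n-1}\sigma_{n-1}^{1-\alpha}$. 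Thus the general term $a_n:=\sigma_{n+1}/\tau_n^\alpha$ satisfies $a_n\le q\,a_{n-1}(1+o(1))$ for large $n$, which is a geometric-type ratio test: the series converges. (I would also note $\theta=0$ here follows exactly as in Lemma~\ref{l:regular} or simply assume it as hypothesis.) For the third claim, $\rho_n\to\infty$, $\theta=0$, $\lambda>1$: now pick $q$ with $1<q<\lambda$; along a subsequence $n_j$ we have $\rho_{n_j}\ge q\,\rho_{n_j-1}^\alpha$, but a single good ratio is not enough for divergence of the series, so here instead I would argue that $\rho_n\to\infty$ alone forces divergence provided we can rule out super-fast decay — but $\lambda>1$ must be used. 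The cleaner argument: $a_n=\sigma_{n+1}/\tau_n^\alpha\sim\rho_n\rho_{n-1}^{1-\alpha}\sigma_{n-1}^{1-\alpha}$ and $a_{n-1}\sim\rho_{n-1}\sigma_{n-1}^{1-\alpha}$, so $a_n/a_{n-1}\sim\rho_n\rho_{n-1}^{-\alpha}=\lambda_n(1+o(1))$; with $\limsup\lambda_n=\lambda>1$, fix $q\in(1,\lambda)$ so that $a_n\ge q\,a_{n-1}$ for infinitely many $n$. This still only yields $\limsup a_n>0$ if the increments are bounded below — and indeed if $\limsup a_n>0$ then $\sum a_n=\infty$ is immediate; but $\limsup a_n$ could be $0$. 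So the correct extraction is: since $a_n/a_{n-1}\to\lambda_n$ in the sense above and $\limsup\lambda_n=\lambda>1$, whenever $\lambda_n\ge q$ we have $a_n\ge q a_{n-1}$, and on the complementary indices $\lambda_n$ can still be close to $1$ (not tiny, since $\rho_n\to\infty$ prevents $\lambda_n\to0$ only if... hmm).

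The honest assessment: the main obstacle is the third claim. The $\limsup$ condition $\lambda>1$ gives geometric-type growth of $a_n$ only along a sparse subsequence, and one must show the \emph{other} terms do not decay fast enough to make the series converge. I expect the resolution to exploit that $\rho_n\to\infty$ forces $\lambda_n=\rho_n/\rho_{n-1}^\alpha$ to be bounded below away from $0$ in a useful averaged sense — more precisely, one should bound $\log a_n$ below: $\log a_n\approx \log a_{n-1}+\log\lambda_n$, so $\log a_n\approx \log a_{n_0}+\sum_{k}\log\lambda_k$, and it suffices that $\sum\log\lambda_k$ does not diverge to $-\infty$, i.e.\ that $\prod\lambda_k$ is not summably small. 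Combined with $\theta=0$ (which via Lemma~\ref{l:regular}-type reasoning constrains the product $\prod\rho_k$ from growing doubly-exponentially fast, equivalently prevents $\sum\log\lambda_k\to+\infty$ too, but here we want the lower side) and $\limsup\lambda_k=\lambda>1$, a careful bookkeeping of the partial sums of $\log\lambda_k$ should show $\log a_n\to+\infty$ along a subsequence or at least $\liminf\log a_n>-\infty$, forcing divergence. I would structure the proof of this part around estimating $\sum_{k=m+1}^{n}\log\lambda_k=\log(\rho_n/\rho_m)-(\alpha-1)\sum_{k=m}^{n-1}\log\rho_k$ via Abel summation, using $\theta=0$ to control the negative part and $\limsup\lambda_k>1$ to keep it from going to $-\infty$; alternatively, I would fall back on the contrapositive — if the series converged then $a_n\to0$ and hence $\rho_n\sigma_n^{1-\alpha}\to0$, feeding this back into $\lambda_n=a_n/a_{n-1}\cdot(1+o(1))\to0$, contradicting $\limsup\lambda_n>1$ — and this last contrapositive route is in fact the cleanest and the one I would actually write.
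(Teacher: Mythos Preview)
Your treatment of the first two claims is correct and matches the paper almost exactly. For the bounded case you use the same integral comparison (rewriting $\sigma_{n+1}/\tau_n^{\alpha}=(1+\rho_n)^{\alpha}\sigma_{n+1}/\tau_{n+1}^{\alpha}\le(1+C)^{\alpha}\sigma_{n+1}/\tau_{n+1}^{\alpha}$ and summing as a Riemann sum for $\int_{\tau_m}^{\infty}x^{-\alpha}\,dx$), arriving at the same constant $c=(1+C)^{\alpha}/(\alpha-1)$. For the second claim your computation $a_n/a_{n-1}\sim\lambda_n$ and the ratio test with $\limsup\lambda_n=\lambda<1$ is precisely what the paper does, via the exact identity
\[
\frac{\sigma_{n+1}}{\tau_n^{\alpha}}
=\frac{\sigma_n}{\tau_{n-1}^{\alpha}}\cdot\frac{\rho_n}{\rho_{n-1}^{\alpha}} \Big(\frac{\rho_{n-1}}{1+\rho_{n-1}}\Big)^{\alpha-1}.
\]

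The third claim is where your proposal breaks down. Your final ``contrapositive'' route asserts: if the series converged then $a_n\to0$, hence $\rho_n\sigma_n^{1-\alpha}\to0$, and \emph{therefore} $\lambda_n=(a_n/a_{n-1})(1+o(1))\to0$, contradicting $\limsup\lambda_n>1$. The last implication is false: $a_n\to0$ says nothing about $a_n/a_{n-1}$ (think of $a_n=1/n$, ratio $\to1$, or $a_n=2^{-n}$, ratio $\equiv1/2$). So the contrapositive does not close, and none of the earlier sketches (extracting a subsequence with $a_n\ge qa_{n-1}$, or the Abel-summation bookkeeping of $\sum\log\lambda_k$) were carried through either.

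For comparison, the paper does not attempt any of these detours: having computed that the ratio of consecutive terms has $\limsup$ equal to $\lambda$, it simply declares both the convergence (for $\lambda<1$) and the divergence (for $\lambda>1$) to follow ``by the ratio test together with'' the displayed identity. You were right to notice that the divergence direction is not the textbook ratio-test statement when only a $\limsup$ is controlled; but your attempted repair does not work, and the paper's own argument stops at invoking the ratio test rather than supplying the additional analysis you were groping for.
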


\begin{proof} 
First, suppose $(\rho_n)$ is bounded by some constant $\rho$. We have 
\begin{align*}
\alpha^{-n}\log\tau_n=\alpha^{-n}\Big(\log\tau_0+\sum_{i=0}^{n-1}\log (1+\rho_i)\Big)
\le \alpha^{-n}\big(\log\tau_0+n\log(1+\rho)\big)\to 0
\end{align*}
implying $\theta=0$. Further, we have 
\begin{align*}
\sum_{n=m}^{\infty}\frac{\sigma_{n+1}}{\tau_n^{\alpha}}
=\sum_{n=m}^{\infty}\frac{(1+\rho_n)^{\alpha}\sigma_{n+1}}{\tau_{n+1}^{\alpha}}
\le (1+\rho)^{\alpha}\sum_{n=m}^{\infty}\frac{\sigma_{n+1}}{\tau_{n+1}^{\alpha}}
\le (1+\rho)^{\alpha}\int_{\tau_m}^{\infty}\frac{dx}{x^{\alpha}}
=\frac{(1+\rho)^{\alpha}}{\alpha-1}\cdot\frac{1}{\tau_m^{\alpha-1}}.
\end{align*}
\smallskip

Now suppose $\rho_n\to\infty$ and $\theta=0$. 
Observe that 
\begin{align}
\label{ooo}
\frac{\sigma_{n+1}}{\tau_n^{\alpha}}
=\frac{\sigma_n}{\tau_{n-1}^{\alpha}}\cdot\frac{\rho_n}{\rho_{n-1}^{\alpha}} \Big(\frac{\rho_{n-1}}{1+\rho_{n-1}}\Big)^{\alpha-1}.
\end{align}
Using $\rho_n\to\infty$ and~\eqref{st} we obtain similarly to~\eqref{r1} 
\begin{align*}
\limsup_{n\to\infty}\frac{\rho_n}{\rho_{n-1}^{\alpha}} \Big(\frac{\rho_{n-1}}{1+\rho_{n-1}}\Big)^{\alpha-1}=\lambda,
\end{align*}
which implies convergence~\eqref{sercon} and divergence~\eqref{sercon1} by the ratio test together with~\eqref{ooo}. 
\end{proof}
\smallskip

The next lemma gives an elementary upper bound for the noise terms $(\e_n)$.

\begin{lemma} 
\label{le1}
Almost surely, $\e_{n+1}\le n$ eventually for all $n$. 
\end{lemma}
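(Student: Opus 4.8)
The statement to prove is that almost surely $\e_{n+1}\le n$ eventually for all $n$, which by the first Borel--Cantelli lemma reduces to showing that $\sum_{n} \P(\e_{n+1}>n)<\infty$. Since the unconditional probability is hard to control directly, I would instead bound the conditional probability $\P_{\mathcal{F}_n}(\e_{n+1}>n)$ and then take expectations. Recall from the definition that $\e_{n+1}=(B_{n+1}-\sigma_{n+1}P_n)/\sqrt{\sigma_{n+1}P_n(1-P_n)}$, so $\{\e_{n+1}>n\}$ is the event that the binomial $B_{n+1}$ exceeds its conditional mean by at least $n\sqrt{\sigma_{n+1}P_n(1-P_n)}$ standard deviations' worth of deviation.

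The key step is a uniform (in the binomial parameters) tail bound. I would apply a standard concentration inequality for the binomial --- Bernstein's or Hoeffding's inequality conditionally on $\mathcal{F}_n$ --- to get something like
\begin{align*}
\P_{\mathcal{F}_n}(\e_{n+1}>n)=\P_{\mathcal{F}_n}\big(B_{n+1}-\sigma_{n+1}P_n> n\sqrt{\sigma_{n+1}P_n(1-P_n)}\big)\le e^{-cn^2}
\end{align*}
for some absolute constant $c>0$, valid regardless of the value of $P_n\in(0,1)$ and of $\sigma_{n+1}\ge 1$. The point is that a deviation of $n$ normalized units is at least a deviation of order $n\sqrt{\sigma_{n+1}}\ge n$ in absolute terms relative to a variance of at most $\sigma_{n+1}/4$, and Bennett/Bernstein-type bounds give exponential-in-$n^2$ decay uniformly. (One must be slightly careful when $\sigma_{n+1}$ is small: e.g. if $\sigma_{n+1}=1$ then $B_{n+1}\in\{0,1\}$ and the bound is even easier, or one can just note $\P_{\mathcal{F}_n}(\e_{n+1}>n)=0$ for $n$ large since $\e_{n+1}$ is then bounded by $\sqrt{(1-P_n)/P_n}$; the general argument still goes through.) Taking expectations, $\P(\e_{n+1}>n)\le e^{-cn^2}$, which is summable.

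Then by Borel--Cantelli, almost surely only finitely many of the events $\{\e_{n+1}>n\}$ occur, i.e. $\e_{n+1}\le n$ eventually for all $n$, as claimed.

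**Main obstacle.** The only delicate point is obtaining the tail bound \emph{uniformly} over all admissible values of $P_n$ and $\sigma_{n+1}$ --- in particular the normalization by $\sqrt{\sigma_{n+1}P_n(1-P_n)}$ could in principle be tiny if $P_n$ is close to $0$ or $1$, which would make $n$ normalized units correspond to a small absolute deviation. However, in that regime the binomial is itself highly concentrated near an endpoint, and the relevant one-sided deviation (here, $B_{n+1}$ exceeding its mean when the mean is already small) is controlled by a Poisson-type tail, which again decays super-exponentially. So one should split into cases according to whether $P_n$ is bounded away from the endpoints or not, apply Bernstein in the bulk case and a crude Chernoff/Markov bound in the extreme case, and check that in both cases the bound is summable in $n$ uniformly. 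This case analysis is the technical heart, but it is routine concentration-of-measure bookkeeping rather than anything conceptually hard.
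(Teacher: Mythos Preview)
Your approach is correct and would work, but you have made life much harder than necessary. The paper's proof is two lines: since $\E_{\mathcal F_n}\e_{n+1}^2=1$ by construction (this is~\eqref{norm}), Chebyshev's inequality gives
\[
\P_{\mathcal F_n}(\e_{n+1}>n)\le \P_{\mathcal F_n}(\e_{n+1}^2>n^2)\le \frac{1}{n^2},
\]
and this bound is automatically uniform in $P_n$ and $\sigma_{n+1}$ because the conditional second moment is normalised to $1$. No concentration inequality and no case analysis is needed. The paper then invokes L\'evy's extension of Borel--Cantelli (so that one works directly with the summability of the conditional probabilities), but your route via taking expectations and the first Borel--Cantelli lemma is equally valid here.

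The ``main obstacle'' you identify --- lack of uniformity of Bernstein-type bounds when $P_n$ is near $0$ or $1$ --- is a genuine issue for \emph{your} argument (the exponent in Bernstein degrades when the variance $\sigma_{n+1}P_n(1-P_n)$ is small compared to the deviation scale), and the case split you propose would indeed repair it. But the whole difficulty evaporates once you notice that $\e_{n+1}$ has been pre-normalised to have unit conditional variance, so the crudest second-moment bound already suffices.
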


\begin{proof} By L\' evy's extension of the Borel-Cantelli Lemmas, see~\cite[\S 12.15]{DW},
\begin{align*}
\{\e_{n+1}> n\text{ i.o.}\}=\Big\{\sum_{n=1}^{\infty}\P_{\mathcal{F}_n}(\e_{n+1}> n)=\infty\Big\}. 
\end{align*}
By Chebychev's inequality and using~\eqref{norm} we have
\begin{align*}
\P_{\mathcal{F}_n}(\e_{n+1}> n)\le\P_{\mathcal{F}_n}(\e_{n+1}^2> n^2)\le \frac{1}{n^2}
\end{align*}
implying that the series converges almost surely. 
\end{proof}

The final result of this section is a CLT statement about the random series of $(\e_n)$. 

\begin{lemma} 
\label{l:liminf0}
Let $(\xi_n)$ be a sequence of random variables adapted to the filtration $(\mathcal{F}_n)$. Suppose 
\begin{align}
\label{bd}
|\xi_i|\le \zeta_n a_i\qquad\text{for all }i\ge n,
\end{align}
for all $n$ almost surely, where $(\zeta_n)$ is an almost surely positive and square-integrable $(\mathcal{F}_n)$-adapted sequence, and $(a_n)$ is a deterministic square-summable sequence. Then, almost surely,  
\begin{align}
\label{se10}
\sum_{i=0}^{\infty}\xi_i\e_{i+1}<\infty
\end{align}
and
\begin{align}
\label{se20}
\liminf_{n\to\infty}\
\frac{1}{\zeta_n\sqrt{A_n}}\sum_{i=n}^{\infty}\xi_i\e_{i+1}<\infty,
\end{align}
where 
\begin{align*}
A_n=\sum_{i=n}^{\infty}a_i^2.
\end{align*}
\end{lemma}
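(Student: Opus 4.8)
The plan is to treat the two statements in turn, both via martingale arguments adapted to the filtration $(\mathcal{F}_n)$, using that $\e_{i+1}$ has conditional mean zero and conditional second moment one given $\mathcal{F}_i$ (see~\eqref{norm}). First I would deal with~\eqref{se10}: set $Z_N=\sum_{i=0}^{N-1}\xi_i\e_{i+1}$, which is an $(\mathcal{F}_N)$-martingale with predictable quadratic variation $\langle Z\rangle_N=\sum_{i=0}^{N-1}\xi_i^2\E_{\mathcal{F}_i}\e_{i+1}^2=\sum_{i=0}^{N-1}\xi_i^2$. On the event $\{\zeta_0<\infty\}$ (which has probability one) the bound~\eqref{bd} with $n=0$ gives $\sum_i\xi_i^2\le \zeta_0^2\sum_i a_i^2=\zeta_0^2 A_0<\infty$, so $\langle Z\rangle_\infty<\infty$ almost surely, and by the standard martingale convergence theorem for $L^2$-bounded-on-$\{\langle Z\rangle_\infty<\infty\}$ martingales (the same reference~\cite[\S 12.13]{DW} already used in Lemma~\ref{diverges}) $Z_N$ converges almost surely, which is~\eqref{se10}.

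For~\eqref{se20}, write $S_n=\sum_{i=n}^{\infty}\xi_i\e_{i+1}$ for the tail sum, which is well-defined a.s.\ by~\eqref{se10}, and note $S_n=Z_\infty-Z_n$. The idea is to produce, for each fixed $n$, a good bound on $S_n$ in terms of $\zeta_n$ and $A_n$ with probability bounded below, then combine over a rapidly increasing subsequence $(n_k)$ via Borel--Cantelli (or Fatou) so that $\liminf$ beats the normalisation. Concretely, on the set $\{\zeta_n\le K\}$ we have $\sum_{i\ge n}\xi_i^2\le K^2 A_n$, so conditionally on $\mathcal{F}_n$ the martingale $(\sum_{i=n}^{N}\xi_i\e_{i+1})_{N\ge n}$ has quadratic variation at most $K^2 A_n$; by Doob's $L^2$ maximal inequality its supremum exceeds $t\sqrt{A_n}$ with conditional probability at most $K^2/t^2$. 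Choosing $t=t_k\to\infty$ slowly and $n_k$ so that the bad probabilities are summable, Borel--Cantelli gives $|S_{n_k}|\le t_k\zeta_{n_k}\sqrt{A_{n_k}}$ eventually on each $\{\sup_k\zeta_{n_k}<\infty\}$; but in fact it is cleaner to avoid $t_k\to\infty$ and instead exploit that $\zeta_n$ is square-integrable: by Fatou/Markov, $\E\liminf_n (S_n/(\zeta_n\sqrt{A_n}))^2\le \liminf_n \E\,\E_{\mathcal{F}_n}[(S_n/\sqrt{A_n})^2]/\E[\,\cdot\,]$ — here one uses $\E_{\mathcal{F}_n}S_n^2\le \zeta_n^2 A_n$ directly (no maximal inequality needed), giving $\E_{\mathcal{F}_n}[S_n^2/(\zeta_n^2 A_n)]\le 1$, hence $\liminf$ of an a.s.-finite nonnegative sequence whose conditional expectations are bounded by $1$ is a.s.\ finite by Fatou applied along any subsequence realising the $\liminf$.

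The main obstacle is the second part: a naive Borel--Cantelli over all $n$ fails because $\sum_n K^2/t^2$ need not converge, so one cannot conclude $S_n/(\zeta_n\sqrt{A_n})$ is bounded for \emph{all} $n$ — and indeed the statement only claims a finite $\liminf$, not $\limsup$. The right move is therefore to pass to a subsequence: pick $n_k\uparrow\infty$ so that $A_{n_k}$ decays fast enough that $\sum_k A_{n_{k+1}}/A_{n_k}<\infty$ is irrelevant, and instead use the conditional Chebyshev bound $\P_{\mathcal{F}_{n_k}}(S_{n_k}^2>\zeta_{n_k}^2 A_{n_k}\,b_k)\le 1/b_k$ with $\sum_k 1/b_k<\infty$ (e.g.\ $b_k=k^2$); then L\'evy's conditional Borel--Cantelli (the version in~\cite[\S 12.15]{DW} already invoked in Lemma~\ref{le1}) yields $S_{n_k}^2\le k^2\zeta_{n_k}^2 A_{n_k}$ eventually, a.s. Dividing by $\zeta_{n_k}^2 A_{n_k}$ and taking square roots gives $|S_{n_k}|/(\zeta_{n_k}\sqrt{A_{n_k}})\le k$ eventually, which forces the full $\liminf_n$ in~\eqref{se20} to be finite a.s., since it is bounded above by a subsequential limit that is finite. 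One must only check that $\zeta_n>0$ a.s.\ (assumed) so the denominators are legitimate, and that $A_n>0$ for all $n$ (if $A_n=0$ for some $n$ then $\xi_i=0$ for all $i\ge n$ a.s.\ by~\eqref{bd} and the tail sum is zero, so~\eqref{se20} is trivial).
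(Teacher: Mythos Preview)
Your treatment of~\eqref{se10} is correct and matches the paper: the partial sums form an $L^2$-bounded martingale (using $|\xi_i|\le\zeta_0 a_i$ and square-integrability of $\zeta_0$), hence converge almost surely.

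For~\eqref{se20} you actually propose two distinct arguments, and their status is different.

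The Fatou argument you sketch in the middle paragraph is correct and is essentially what the paper does. The key estimate is $\E_{\mathcal{F}_n}S_n^2\le \zeta_n^2 A_n$ (where $S_n=\sum_{i\ge n}\xi_i\e_{i+1}$), whence $\E\big[(S_n/(\zeta_n\sqrt{A_n}))^2\big]\le 1$ for every $n$; Fatou then gives $\E\liminf_n (S_n/(\zeta_n\sqrt{A_n}))^2\le 1$, so the $\liminf$ is finite almost surely. The paper arrives at the same conclusion by writing out the event $\{\liminf=\infty\}$ as $\bigcap_k\bigcup_N\bigcap_{n\ge N}\{S_n/(\zeta_n\sqrt{A_n})>k\}$, bounding the inner intersection by the single term $n=N$, and applying Chebyshev with the same second-moment estimate to get probability at most $1/k^2$. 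These are two packagings of the same second-moment bound; your Fatou version is arguably cleaner.

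However, the subsequence/Borel--Cantelli approach that you then declare to be ``the right move'' has a genuine gap. With $b_k=k^2$ you obtain $|S_{n_k}|/(\zeta_{n_k}\sqrt{A_{n_k}})\le k$ eventually, but the right-hand side tends to infinity with $k$, so this does \emph{not} yield a finite subsequential limit, contrary to what you claim in the last sentence. Knowing that the ratio at time $n_k$ is at most $k$ says nothing about $\liminf_k$ being finite. The same issue already appears in your earlier variant with $t_k\to\infty$: any Borel--Cantelli scheme that makes the bad probabilities summable forces the threshold to diverge, which defeats the purpose. Drop this approach and keep the Fatou argument.
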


\begin{proof} 
Let $n\in\N_0$. For each $m\ge n$, denote 
\begin{align*}
S^{\ssup n}_m=\sum_{i=n}^m\xi_i\e_{i+1}.
\end{align*}
Observe that $S^{\ssup n}$ is a martingale bounded in $L^2$ since
by~\eqref{se10} for all $m$
\begin{align}
\label{mo20}
\E \big(S_m^{\ssup n}\big)^2
\le \E\zeta_n^2\, \sum_{i=n}^{\infty}a_i^2,
\end{align}
which is finite as $(a_i)$ is summable and $\zeta_n$ is square-integrable.  
Hence, as $m\to\infty$, $S^{\ssup n}_m$ converges almost surely. 
This in particular implies~\eqref{se10}. 
\smallskip

Observe that~\eqref{se20} is equivalent to showing that 
\begin{align}
\label{se205}
\P\Big(\bigcap_{k=1}^{\infty}\bigcup_{N=1}^{\infty}\bigcap_{n\ge N} \Big\{
\frac{1}{\zeta_n\sqrt{A_n}}\sum_{i=n}^{\infty}\xi_i\e_{i+1}>k
\Big\}\Big)=0,
\end{align}
or, equivalently,
\begin{align}
\label{mo30}
\lim_{k\to\infty}\lim_{N\to\infty}\P\Big(\bigcap_{n\ge N} \Big\{
\frac{1}{\zeta_n\sqrt{A_n}}\sum_{i=n}^{\infty}\xi_i\e_{i+1}>k
\Big\}\Big)=0.
\end{align}
By Chebychev's inequality and using~\eqref{norm}, \eqref{bd} and $L^2$-boundedness of the martingales $S^{\ssup N}$ conditionally on $\mathcal{F}_{N}$ in the same way it was done in~\eqref{mo20}, we have
\begin{align*}
\P\Big(\bigcap_{n\ge N} &\Big\{\frac{1}{\zeta_n\sqrt{A_n}}\sum_{i=n}^{\infty}\xi_i\e_{i+1}>k\Big\}\Big)
\le \P\Big(\frac{1}{\zeta_N\sqrt{A_N}}\sum_{i=N}^{\infty}\xi_i\e_{i+1}>k\Big)\\
&\le \E\,\P_{\mathcal{F}_{N}}\Big(\Big(\sum_{i=N}^{\infty}\xi_i\e_{i+1}\Big)^2> k^2\zeta_N^2A_N\Big)
\le \frac{1}{k^2}\E\Big[\frac{1}{\zeta_N^2A_N}\E_{\mathcal{F}_{N}}\Big(\sum_{i=N}^{\infty}\xi_i\e_{i+1}\Big)^2\Big]
\le \frac{1}{k^2}.
\end{align*}
Combining this with~\eqref{mo30} we obtain~\eqref{se205}.
\end{proof}
\bigskip


\section{Subcritical regime}

\label{s:sub}

The aim of this section is to prove Theorem~\ref{main1}. We will split it in two propositions corresponding to 
the cases when $(\rho_n)$ is bounded and when it tends to infinity with $\lambda<1$. The remaining case of 
$\rho_n\to \infty$ with $\lambda>1$ will follow easily from the same method as in Section~\ref{s:nomo}.
\smallskip

It follows from~\eqref{psibounds} and~\eqref{iteration1} that for all $n\in\N_0$
\begin{align}
T_{n+1}
&\le T_n+2^{\alpha-1}\sigma_{n+1}\Theta_n^{\alpha} 
+\e_{n+1}\sqrt{\sigma_{n+1}P_n(1-P_n)}\notag\\
&= T_n+2^{\alpha-1}T_n^{\alpha}\frac{\sigma_{n+1}}{\tau_n^{\alpha}} 
+\e_{n+1}\sqrt{\sigma_{n+1}P_n(1-P_n)}.
\label{itit}
\end{align}
This iterative upper bound will play an important role in showing monopoly. 
\smallskip

The first proposition deals with the case when $(\rho_n)$ is bounded. This is an easier case as $(\sigma_n)$ is growing not too fast so that the sums 
\begin{align}
\label{aprox}
\sum_{i=n}^{\infty}\frac{\sigma_{i+1}}{\tau_i^{\alpha}}\approx \int_{\tau_n}^{\infty}\frac{dx}{x^{\alpha}}=\frac{1}{\alpha-1}\cdot \frac{1}{\tau_n^{\alpha-1}}
\end{align}
can be approximated accurately enough by the corresponding integrals. 

\begin{prop} 
\label{p:mod1}
Suppose $\alpha>1$, {\rm (S)} is satisfied, and 
$(\rho_n)$ is bounded. Then 
$\P(\mathcal{M})=1$.
\end{prop}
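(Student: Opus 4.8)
The plan is to reduce to a one‑sided statement about the first bin and then, crucially, to control the quantity $u_n:=T_n^{-(\alpha-1)}$ rather than $T_n$ itself. Since $(\rho_n)$ bounded is one of the two alternatives in condition {\rm (R)} and {\rm (S)} is assumed, Theorem~\ref{main0} applies, so $\Theta_n$ converges almost surely to a $\{0,1\}$‑valued random variable; in particular $\P(\{\Theta_n\to 0\}\cup\{\Theta_n\to 1\})=1$. Because $(T_n)$ is a non‑decreasing integer sequence, having $T_\infty:=\lim_n T_n<\infty$ is the same as $B_n=0$ eventually, i.e.\ monopoly, and on $\{\Theta_n\to 0\}$ the other branch of $\mathcal M$ (i.e.\ $B_n=\sigma_n$ eventually) is impossible since it forces $\Theta_n\to 1$. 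By the symmetry between the two bins the analogous statement holds on $\{\Theta_n\to 1\}$ for the complementary count $\tau_n-T_n$. Hence it suffices to prove $\P\big(\{\Theta_n\to 0\}\cap\{T_\infty=\infty\}\big)=0$.

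To do this I would first record, from $T_{n+1}=T_n+B_{n+1}$ with $B_{n+1}\ge 0$ together with the elementary inequality $(1+x)^{-(\alpha-1)}\ge 1-(\alpha-1)x$ for $x\ge 0$ (valid for every $\alpha>1$), that $u_{n+1}\ge u_n-(\alpha-1)B_{n+1}/T_n^{\alpha}$, and therefore after telescoping $u_\infty\ge u_m-(\alpha-1)\Sigma_m$, where $\Sigma_m:=\sum_{k\ge m}B_{k+1}/T_k^{\alpha}$. The point of passing to $u_n$ is that the drift becomes \emph{exactly linearised}: using $\E_{\mathcal F_k}B_{k+1}=\sigma_{k+1}\psi(\Theta_k)$, the bound $\psi(\Theta_k)\le 2^{\alpha-1}\Theta_k^{\alpha}$ from~\eqref{psibounds}, and the identity $\Theta_k^{\alpha}/T_k^{\alpha}=\tau_k^{-\alpha}$, one gets $\E_{\mathcal F_m}\Sigma_m\le 2^{\alpha-1}\sum_{k\ge m}\sigma_{k+1}/\tau_k^{\alpha}$, which by Lemma~\ref{omegazero} (the bounded‑$(\rho_n)$ case, exactly the place where the integral approximation~\eqref{aprox} is invoked) is at most $2^{\alpha-1}c\,\tau_m^{1-\alpha}$. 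Equivalently, $\E\big[\tau_m^{\alpha-1}\Sigma_m\big]\le 2^{\alpha-1}c$ uniformly in $m$. Note that no martingale/noise analysis is needed here: only $B_{n+1}\ge 0$ and the conditional mean of $B_{n+1}$ enter.

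To finish I would upgrade this uniform‑in‑$m$ bound in expectation to an almost sure statement. By Fatou's lemma $\liminf_m \tau_m^{\alpha-1}\Sigma_m<\infty$ almost surely, so along a random subsequence $(m_j)$ one has $\tau_{m_j}^{\alpha-1}\Sigma_{m_j}\le C(\omega)<\infty$. On $\{\Theta_n\to 0\}\cap\{T_\infty=\infty\}$ we have $u_\infty=0$, so the telescoping bound forces $\Sigma_m\ge u_m/(\alpha-1)=\big[(\alpha-1)\Theta_m^{\alpha-1}\tau_m^{\alpha-1}\big]^{-1}$ for every $m$, i.e.\ $\Theta_m^{\alpha-1}\cdot\tau_m^{\alpha-1}\Sigma_m\ge (\alpha-1)^{-1}$; evaluating along $(m_j)$ and using $\Theta_{m_j}\to 0$ gives $(\alpha-1)^{-1}\le C(\omega)\,\Theta_{m_j}^{\alpha-1}\to 0$, a contradiction. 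Hence the target event is null, and combined with the first paragraph this yields $\P(\mathcal{M})=1$.

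The step I expect to be the real obstacle is choosing \emph{what} to track. The naive route — keep $T_n$ itself, stop at the first exit above a level $L$, and bound the exit probability via Doob's inequality applied to the martingale part of~\eqref{itit} — runs into a circular bootstrap (the drift and variance bounds presuppose $T_n\le L$) and, worse, a scale mismatch: the stopped martingale has conditional variance of order $L^{\alpha}/\tau_m^{\alpha-1}$, small only when $L\lesssim \tau_m^{(\alpha-1)/\alpha}$, which is incompatible with the requirement $L>T_m$ unless one has already established a decay rate for $\Theta_n$ (equivalently a polynomial bound $T_n\lesssim \tau_n^{1-1/\alpha}$). The substitution $u_n=T_n^{-(\alpha-1)}$ dissolves both difficulties simultaneously, so after that the only remaining (and mild) difficulty is the Fatou‑plus‑$\{\Theta_n\to\{0,1\}\}$ argument that turns the expectation estimate into the almost sure conclusion.
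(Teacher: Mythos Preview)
Your proof is correct, and its skeleton coincides with the paper's: both reduce via Theorem~\ref{main0} and symmetry to showing $\{\Theta_n\to 0\}\cap\{T_n\to\infty\}$ is null, and both do so by sandwiching the quantity $\sum_{k\ge m}B_{k+1}/T_k^{\alpha}=\sum_{k\ge m}(T_{k+1}-T_k)/T_k^{\alpha}$. The lower bound $\ge (\alpha-1)^{-1}T_m^{1-\alpha}$ on that event is obtained identically (your Bernoulli inequality for $u_n=T_n^{1-\alpha}$ telescopes to the same integral comparison the paper writes in~\eqref{ko1}), and the deterministic input is the same tail estimate from Lemma~\ref{omegazero}.

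Where you diverge is in the treatment of the stochastic upper bound. The paper splits $(T_{k+1}-T_k)/T_k^{\alpha}$ into drift plus a centred martingale increment $\xi_k\e_{k+1}$ and then invokes the $L^2$ lemma~\ref{l:liminf0} to control $\liminf_n(\zeta_n\sqrt{A_n})^{-1}\sum_{k\ge n}\xi_k\e_{k+1}$, which in the end forces the case distinction $\alpha\ge 2$ versus $1<\alpha<2$. You instead keep the nonnegative variable $B_{k+1}/T_k^{\alpha}$ intact, bound its conditional mean by $2^{\alpha-1}\sigma_{k+1}/\tau_k^{\alpha}$, and use Fatou to get $\liminf_m \tau_m^{\alpha-1}\Sigma_m<\infty$ almost surely. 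This is a genuine simplification: it eliminates the need for Lemma~\ref{l:liminf0} and the final case split, at the cost of nothing. The paper's martingale route has the advantage that it extends more directly to settings where one wants finer quantitative control on the fluctuations (as is later needed in Proposition~\ref{p:fast}), but for the present statement your first--moment/Fatou argument is strictly cleaner.
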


\begin{proof}
%
By symmetry and Theorem~\ref{main0}, without loss of generality it suffices to consider the event $\mathcal{E}=\{\Theta_n\to 0\}$ and prove that 
$T_n$ is bounded on $\mathcal{E}$.
\smallskip

Consider the event $\mathcal{E}$ and assume that $T_n\to\infty$.
Then for all $n\in\N_0$
\begin{align}
\label{ko1}
\sum_{i=n}^{\infty}\frac{T_{i+1}-T_i}{T_i^{\alpha}}\ge \int_{T_n}^{\infty}\frac{dx}{x^{\alpha}}=\frac{1}{\alpha-1}\cdot\frac{1}{T_n^{\alpha-1}}.
\end{align}


On the other hand, it follows from~\eqref{itit} that 
\begin{align}
\label{fdif1}
\frac{T_{i+1}-T_i}{T_i^{\alpha}}\le 2^{\alpha-1}\frac{\sigma_{i+1}}{\tau_i^{\alpha}}+\xi_{i}\e_{i+1},
\end{align}
where 
\begin{align*}
\xi_{i}=T_i^{-\alpha}\sqrt{\sigma_{i+1}P_i(1-P_i)}.
\end{align*}

By~\eqref{fdif1} and Lemma~\ref{omegazero} we have with some $c>0$
\begin{align}
\label{ko2}
\sum_{i=n}^{\infty}\frac{T_{i+1}-T_i}{T_i^{\alpha}}
\le 2^{\alpha-1}\sum_{i=n}^{\infty}\frac{\sigma_{i+1}}{\tau_i^{\alpha}}
+\sum_{i=n}^{\infty}\xi_{i}\e_{i+1}
\le \frac{c}{\tau_n^{\alpha-1}}+\sum_{i=n}^{\infty}\xi_{i}\e_{i+1}.
\end{align}

Combining this with~\eqref{ko1} we obtain   
\begin{align}
\label{ko5}
\frac{1}{\alpha-1}\cdot\frac{1}{T_n^{\alpha-1}}\le\frac{c}{\tau_n^{\alpha-1}}+
\sum_{i=n}^{\infty}\xi_{i}\e_{i+1}.
\end{align}
Observe that by~\eqref{psibounds} and using $T_i\ge T_n$ on the whole probability space we have 
\begin{align*}
|\xi_{i}|\le T_i^{-\alpha}\sqrt{\sigma_{i+1}2^{\alpha-1}\Theta_i^{\alpha}}
=T_i^{-\frac{\alpha}{2}}\sqrt{2^{\alpha-1}\frac{\sigma_{i+1}}{\tau_i^{\alpha}}}
\le \zeta_n a_{i}
\end{align*}
for all $n$ and all $i\ge n$, where 
\begin{align*}
\zeta_n=T_n^{-\frac{\alpha}{2}}
\qquad\text{and}\qquad
a_i=\sqrt{2^{\alpha-1}\frac{\sigma_{i+1}}{\tau_i^{\alpha}}}.
\end{align*}
The sequence $(\zeta_n)$ is clearly $(\mathcal{F}_n)$-adapted and square-integrable since it is bounded by one. 
By Lemma~\ref{omegazero} the sequence $(a_i)$ is square-summable, and 
\begin{align*}
A_{n}=\sum_{i=n}^{\infty}a_{i}^2\le \frac{c2^{\alpha-1}}{\tau_{n}^{\alpha-1}}.
\end{align*}
Using this, we can rewrite and further estimate~\eqref{ko5} on $\mathcal{E}$ as 
\begin{align*}
\frac{1}{\Theta_n^{\alpha-1}}-c(\alpha-1)
&\le (\alpha-1)\tau_n^{\alpha-1}\sum_{i=n}^{\infty}\xi_{i}\e_{i+1}\\
&\le 
(\alpha-1)\tau_n^{\alpha-1}T_n^{-\frac{\alpha}{2}}\sqrt{\frac{c2^{\alpha-1}}{\tau_{n}^{\alpha-1}}}
\Big[\frac{1}{\zeta_{n}\sqrt{A_{n}}}\sum_{i=n}^{\infty}\xi_{i}\e_{i+1}\Big]\\
&\le (\alpha-1)\sqrt{c 2^{\alpha-1}}\Theta_n^{-\frac{\alpha}{2}}\tau_n^{-\frac 1 2 }
\Big[\frac{1}{\zeta_{n}\sqrt{A_{n}}}\sum_{i=n}^{\infty}\xi_{i}\e_{i+1}\Big].
\end{align*}
%
%
%
By Lemma~\ref{l:liminf0} we obtain
\begin{align*}
\liminf_{n\to\infty}\Big[\frac{1}{\Theta_n^{\alpha-1}}- c(\alpha-1)\Big]\Theta_n^{\frac{\alpha}{2}}\sqrt{\tau_n}
<\infty.
\end{align*}
Since $\Theta_n\to 0$ we obtain 
\begin{align}
\label{c1}
\liminf\limits_{n\to\infty}\Theta_n^{1-\frac{\alpha}{2}}\sqrt{\tau_n}<\infty.
\end{align}
If $\alpha\ge 2$ we get a contradiction as $\Theta_n\le 1$. If $1<\alpha<2$ then~\eqref{c1} implies 
\begin{align*}
\liminf\limits_{n\to\infty}T_n\tau_n^{\frac{\alpha-1}{2-\alpha}}<\infty,
\end{align*}
and we get a contradiction as $T_n\ge 1$ and hence the value should be infinite.
\end{proof}

Now we turn to the case when $\rho_n\to\infty$, and the approximation~\eqref{aprox} is no longer valid. 
Instead, we will look at the iterations~\eqref{itit} in more detail. 

\begin{prop} 
\label{p:fast}
Suppose $\alpha>1$, $\theta=0$, {\rm (S)} is satisfied, $\rho_n\to\infty$, and $\lambda<1$. Then 
$\P(\mathcal{M})=1.$
\end{prop}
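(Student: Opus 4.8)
The plan is to follow the scheme of Proposition~\ref{p:mod1} --- reduce to the event $\mathcal{E}=\{\Theta_n\to0\}$ and derive a contradiction from the assumption $T_n\to\infty$ --- but to replace the integral estimate~\eqref{aprox}, which is unavailable when $\rho_n\to\infty$, by a more careful exploitation of the iteration~\eqref{itit}.

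By symmetry and Theorem~\ref{main0} it suffices to show that $T_n$ is bounded almost surely on $\mathcal{E}$; since $(T_n)$ is non-decreasing and $\N_0$-valued, this is equivalent to $B_n=0$ eventually, i.e.\ to monopoly. So suppose, aiming for a contradiction, that $T_n\to\infty$ on a subset of $\mathcal{E}$ of positive probability, and argue on that subset. Write $b_n=\sigma_{n+1}/\tau_n^{\alpha}$ and $\xi_i=T_i^{-\alpha}\sqrt{\sigma_{i+1}P_i(1-P_i)}$. From~\eqref{itit} and~\eqref{psibounds},
\begin{align*}
\frac{T_{i+1}-T_i}{T_i^{\alpha}}\le 2^{\alpha-1}b_i+\xi_i\e_{i+1},
\end{align*}
while monotonicity of $x\mapsto x^{-\alpha}$ together with $T_i\to\infty$ gives $\sum_{i\ge n}\frac{T_{i+1}-T_i}{T_i^{\alpha}}\ge\frac{1}{(\alpha-1)T_n^{\alpha-1}}$.

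Two inputs feed into this. First, $(b_n)$ decays geometrically: combining~\eqref{st}, \eqref{ooo} and the computation in the proof of Lemma~\ref{l:regular} one obtains $\limsup_n b_n/b_{n-1}=\lambda<1$, so that $\sum_{i\ge n}b_i\le c\,b_n$ for all large $n$, with some constant $c$ (and $\sum b_i<\infty$, cf.\ Lemma~\ref{omegazero}). Second, the martingale part is controlled by Lemma~\ref{l:liminf0}: since $|\xi_i|\le T_i^{-\alpha/2}\sqrt{2^{\alpha-1}b_i}\le\zeta_n a_i$ for all $i\ge n$ with $\zeta_n=T_n^{-\alpha/2}\le1$ (square-integrable and $(\mathcal{F}_n)$-adapted) and $a_i=\sqrt{2^{\alpha-1}b_i}$ (deterministic and square-summable), Lemma~\ref{l:liminf0} produces a random subsequence $(n_k)$ along which $\sum_{i\ge n_k}\xi_i\e_{i+1}\le L\,T_{n_k}^{-\alpha/2}\sqrt{A_{n_k}}$, where $A_n=\sum_{i\ge n}a_i^2\le c'\,b_n$ for large $n$. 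Substituting both estimates into the displayed inequality and multiplying by $T_{n_k}^{\alpha-1}$ yields, along $(n_k)$, a relation of the form
\begin{align*}
1\le C_1\,b_{n_k}T_{n_k}^{\alpha-1}+C_2\,L\,b_{n_k}^{1/2}T_{n_k}^{\alpha/2-1}
=C_1\,\rho_{n_k}\Theta_{n_k}^{\alpha-1}+C_2\,L\,\big(b_{n_k}T_{n_k}^{\alpha-2}\big)^{1/2}.
\end{align*}

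The remaining task --- and the step I expect to be the main obstacle --- is to show that the right-hand side tends to $0$, contradicting the inequality. The naive fact $\Theta_n\to0$ is not enough: it only gives $T_n=o(\tau_n)$, whereas what is needed is $\rho_n\Theta_n^{\alpha-1}=b_nT_n^{\alpha-1}\to0$, i.e.\ $T_n=o\big(b_n^{-1/(\alpha-1)}\big)$, which is genuinely stronger since $b_n^{-1/(\alpha-1)}=o(\tau_n)$ when $\rho_n\to\infty$. This is precisely where $\theta=0$ must be used. The mechanism is the feedback in~\eqref{itit111}: at every step for which $\rho_n\Theta_n^{\alpha-1}$ is not already small, the term $\frac{\sigma_{n+1}}{\tau_{n+1}}\psi(\Theta_n)$ dominates $\frac{\tau_n}{\tau_{n+1}}\Theta_n$, so $\Theta_{n+1}$ is comparable to $\Theta_n^{\alpha}$ and $\log(1/\Theta_n)$ grows at exponential rate $\alpha$; since $\theta=0$ forces $\log\tau_n=o(\alpha^n)$, this feedback-driven decay of $\Theta_n$ outruns the growth of $\tau_n$, so $T_n=\Theta_n\tau_n$ cannot escape to infinity. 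Making this rigorous requires bookkeeping of the interleaved ``slow'' steps (where $\rho_n\Theta_n^{\alpha-1}$ is small, $\Theta_{n+1}\approx\Theta_n/\rho_n$, and $T_n$ is essentially frozen) and ``fast'' steps (where the feedback acts), carried out uniformly in the random fluctuations --- which are harmless here because $\sigma_{n+1}$ is enormous, so by Lemma~\ref{le1} and the variance bound the quantity $\e_{n+1}\sqrt{\sigma_{n+1}P_n(1-P_n)}$ is negligible against the relevant averages. Once the contradiction is obtained, $T_n$ is bounded on $\mathcal{E}$, hence $B_n=0$ eventually on $\mathcal{E}$, and by symmetry $\P(\mathcal{M})=1$.
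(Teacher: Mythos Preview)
Your proposal correctly reduces to the event $\mathcal{E}=\{\Theta_n\to0,\ T_n\to\infty\}$, correctly extracts the geometric tail $\sum_{i\ge n}b_i\le c\,b_n$ from $\limsup_n b_n/b_{n-1}=\lambda<1$, and correctly feeds Lemma~\ref{l:liminf0} to reach
\[
1\le C_1\,b_{n_k}T_{n_k}^{\alpha-1}+C_2L\,(b_{n_k}T_{n_k}^{\alpha-2})^{1/2}
\]
along a subsequence $(n_k)$. But the step you flag as ``the main obstacle'' --- showing $b_nT_n^{\alpha-1}=\rho_n\Theta_n^{\alpha-1}\to0$ --- is not proved; it is the heart of the matter, and your heuristic about fast and slow steps does not close it. Two concrete difficulties: first, the $\theta=0$ mechanism you describe (if $b_nT_n^{\alpha-1}>\delta$ throughout, then $\Theta_{n+1}\lesssim\Theta_n^{\alpha}$ and $\alpha^{-n}\log T_n$ becomes negative, contradicting $T_n\ge1$) only yields $b_nT_n^{\alpha-1}\le\delta$ \emph{infinitely often}, not convergence to zero; second, the subsequence $(n_k)$ handed to you by Lemma~\ref{l:liminf0} is governed by the martingale tail and need not coincide with the times where $b_nT_n^{\alpha-1}$ is small.

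The paper's proof fills exactly this gap, and in doing so renders your summation $\sum(T_{i+1}-T_i)/T_i^{\alpha}$ and Lemma~\ref{l:liminf0} unnecessary. Step~1 makes your heuristic rigorous: it introduces stopping times $\kappa_n$ marking successive returns to $\{b_iT_i^{\alpha-1}\le\delta\}$ and shows all $\kappa_n<\infty$ on $\mathcal{E}$ via the $\theta=0$ argument (with a genuine control of the noise through a second application of Lemma~\ref{l:liminf0}, not just Lemma~\ref{le1}). Step~2 produces, infinitely often, a noise bound $\e_i\le c_i(i-\kappa_n)$ for all $i>\kappa_n$. Step~3 is the decisive new input: starting from one good time $\kappa_\nu$, an \emph{induction} that uses $\limsup_n\frac{\sigma_{n+1}\tau_{n-1}^{\alpha}}{\sigma_n\tau_n^{\alpha}}=\lambda$ together with the Step~2 noise bound propagates the smallness forward to a uniform geometric decay $b_nT_n^{\alpha-1}\le\delta q^{n-\kappa_\nu}$ for \emph{all} $n\ge\kappa_\nu$ and some $q\in(\lambda,1)$. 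Once this is in hand, Step~4 plugs it back into~\eqref{itit} and bounds $T_n$ by $T_{\kappa_\nu}$ times a convergent infinite product, contradicting $T_n\to\infty$ directly. In short, the work you defer is most of the proof, and once carried out it yields the conclusion without passing through your displayed inequality at all.
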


\begin{proof}
By symmetry and Theorem~\ref{main0}, without loss of generality it suffices to consider the event 
\begin{align*}
\mathcal{E}=\{\Theta_n\to 0\}\cap \{T_n\to\infty\}
\end{align*}
and prove that it has probability zero.
\smallskip




We begin by defining a small parameter $\delta$ in the following way.
Let $q\in (\lambda,1)$. 
Let $\e>0$ be small enough so that 
\begin{align}
\label{eeqq}
(\lambda+\e)(1+2\e)<q.
\end{align}
Let $\gamma\in(\max\{0,\alpha-2\},\alpha-1)$ and let $\delta>0$ be small enough so that 
\begin{align}
\label{ddd}
\delta 2^{\alpha-1}<\e
\end{align}
and 
\begin{align}
\label{ddd2}
2^{\frac{\alpha-1}{2}} \delta^{\frac{\gamma}{2(\alpha-1)}} 
\max_{m\in\N_0}\big[(m+1)q^{\frac{\gamma m}{2(\alpha-1)}}\big]<\e.
\end{align}

Further, we denote $\kappa_0=0$, and for each $n\in \N$, define the stopping times
\begin{align*}
\kappa_n=\inf\Big\{i>\kappa_{n-1}: \frac{\sigma_{i+1}}{\tau_i^{\alpha}}T_i^{\alpha}\le \delta T_i\Big\}
\end{align*}
to be the subsequent times when the second term in the iteration~\eqref{itit} is $\varepsilon$-smaller than the first term. 
\smallskip

The proof of the theorem now consist of the following four steps.  First, we show that on $\mathcal{E}$ the $\delta$-negligibility of the second term occurs infinitely often, that is, all $\kappa_n$ are finite. Then we provide an upper bound for $\e_i, i\ge \kappa_n$, as a function of both $n$ and $i$.  Using that bound, we observe that we can find a stopping time $\kappa_{\nu}$ such that for all subsequent times $n\ge \kappa_{\nu}$
the second term in~\eqref{itit} will be $\delta q^{n-\kappa_{\nu}}$-smaller than the first term. This much stronger domination of the first term finally allows us to show that $(T_n)$ is bounded on $\mathcal{E}$, which leads to a contradiction. 
\bigskip 

\emph{Step 1.} Let us prove that $\kappa_n<\infty$ almost surely for all $n$ on the event $\mathcal{E}$. 
Suppose this is not the case, that is, 
\begin{align*}
\bar n=\sup\Big\{i\in \N:\frac{\sigma_{i+1}}{\tau_i^{\alpha}}T_i^{\alpha}\le \delta T_i\Big\}
\end{align*}
is finite on $\mathcal{E}$ with positive probability. 
Roughly speaking, this means that in the iteration~\eqref{itit} the second term plays the main r\^ ole eventually, 
and (ignoring the other terms and constants for the moment) for all $n>k>\bar n$ we  have approximately
\begin{align*}
T_n
&\preceq T_{n-1}^{\alpha}\frac{\sigma_n}{\tau_{n-1}^{\alpha}}
\preceq T_k^{\alpha^{n-k}}
\frac{\sigma_{n}}{\tau_{n-1}^{\alpha}}
\Big(\frac{\sigma_{n-1}}{\tau_{n-2}^{\alpha}}\Big)^{\alpha}
\cdots \Big(\frac{\sigma_{k+1}}{\tau_k^{\alpha}}\Big)^{\alpha^{n-k-1}}
\le \tau_n\Theta_k^{\alpha^{n-k}},
\end{align*}
where $\preceq$ stands for ``approximately less''. Since $\theta=0$ this would lead to 
\begin{align*}
\limsup_{n\to\infty}\alpha^{-n}\log T_n \preceq \alpha^{-k}\log \Theta_k<0
\end{align*}
for sufficiently large $k$ since $\Theta_k\to 0$ on $\mathcal{E}$. This, however, would be a contradiction to $T_n\to 0$. 
\smallskip

Now we will make this argument precise. 
On the event $\{\bar n<\infty\}\cap \mathcal{E}$ by~\eqref{itit} we have for all
$n\ge \bar n$ 
\begin{align*}
T_{n+1}
&\le \big(\delta^{-1}+2^{\alpha-1}\big)T_{n}^{\alpha}\frac{\sigma_{n+1}}{\tau_{n}^{\alpha}}
+\e_{n+1}\sqrt{T_n^{\alpha}\frac{\sigma_{n+1}}{\tau_n^{\alpha}}(1-P_{n})}
\le cT_{n}^{\alpha}\frac{\sigma_{n+1}}{\tau_{n}^{\alpha}}(1
+\hat\xi_n\e_{n+1}).
\end{align*}
where $c=\delta^{-1}+2^{\alpha-1}$ and 
\begin{align*}
\hat \xi_n=
c^{-1}\Big[T_n^{\alpha}\frac{\sigma_{n+1}}{\tau_{n}^{\alpha}}\Big]^{-\frac{1}{2}}\sqrt{1-P_{n}}
=c^{-1}\Big[\max\Big\{T_n^{\alpha}\frac{\sigma_{n+1}}{\tau_{n}^{\alpha}},\delta T_n\Big\}\Big]^{-\frac{1}{2}}\sqrt{1-P_{n}}.
\end{align*}
Iterating and using $\sigma_i\le\tau_i$ and $\log(1+x)\le x$ we obtain for all $k>\bar n$ and all $n>k$
\begin{align*}
T_n
&\le c^{1+\alpha+\cdots+\alpha^{n-k-1}}T_k^{\alpha^{n-k}}
\frac{\sigma_{n}}{\tau_{n-1}^{\alpha}}
\Big(\frac{\sigma_{n-1}}{\tau_{n-2}^{\alpha}}\Big)^{\alpha}
\cdots \Big(\frac{\sigma_{k+1}}{\tau_k^{\alpha}}\Big)^{\alpha^{n-k-1}}
 \prod_{i=k}^{n-1} (1+\hat\xi_i\e_{i+1})^{\alpha^{n-i-1}}\\
&\le \big(c^{\frac{1}{\alpha-1}}\big)^{\alpha^{n-k}}\tau_n\Theta_k^{\alpha^{n-k}}
\prod_{i=k}^{n-1} (1+\hat\xi_i\e_{i+1})^{\alpha^{n-i-1}}\\
&\le \tau_n\Big[c^{\frac{1}{\alpha-1}}\Theta_k\exp\Big\{
\sum_{i=k}^{n-1} \alpha^{k-i-1}\log(1
+\hat\xi_i\e_{i+1}]\Big\}
\Big]^{\alpha^{n-k}}\\
&\le \tau_n\Big[c^{\frac{1}{\alpha-1}}\Theta_k\exp\Big\{\alpha^{k-1}
\sum_{i=k}^{n-1} \xi_i\e_{i+1}\Big\}
\Big]^{\alpha^{n-k}},
\end{align*}
where 
\begin{align*}
\xi_i=\alpha^{-i}\hat\xi_i.
\end{align*}

As $\theta=0$ this implies, for all $k\ge \bar n$, 
\begin{align}
\label{sub}
\limsup_{n\to\infty}\alpha^{-n}\log T_n
\le \alpha^{-k}\log \Big[c^{\frac{1}{\alpha-1}}\Theta_k\exp\Big\{\alpha^{k-1}
\sum_{i=k}^{\infty} \xi_i\e_{i+1}\Big\}
\Big].
\end{align}
It suffices to show that on the event $\mathcal{E}$ the expression on the right hand side is negative for some $k$ (depending on $\omega\in \mathcal{E}$), as it would imply that $T_n\to 0$ contradicting $T_n\ge 1$.
Since $\Theta_k\to 0$ on $\mathcal{E}$, it amounts to proving that 
\begin{align}
\label{sub9}
\liminf_{k\to\infty}\Big[\alpha^{k-1}
\sum_{i=k}^{\infty} \xi_i\e_{i+1}\Big]<\infty.
\end{align}

To show that this holds almost surely we use Lemma~\ref{l:liminf0}. 
Observe that since $T_i\ge 1$, on the whole probability space we have 
\begin{align*}
|\xi_i|\le \alpha^{-i}|\hat \xi_i|
\le \frac{\alpha^{-i}}{c\sqrt {\delta T_i}}
\le \frac{\alpha^{-i}}{c\sqrt {\delta}},
\end{align*}
for all $i\ge n$ and all $n$.
With 
\begin{align*}
A_n=\frac{1}{\delta c^2}\sum_{i=n}^{\infty}\alpha^{-2i}=
\frac{\alpha^{-2n+2}}{\delta c^2(\alpha^2-1)}
\end{align*}
we have by Lemma~\ref{l:liminf0}
\begin{align*}
\liminf_{k\to\infty}\Big[\alpha^{k-1}
\sum_{i=k}^{\infty} \xi_i\e_{i+1}\Big]
&=\frac{1}{\sqrt{\delta c^2(\alpha^2-1)}}\liminf_{k\to\infty}\Big[\frac{1}{\sqrt{A_{k}}}\sum_{i=k}^{\infty} \xi_i\e_{i+1}\Big]<\infty
\end{align*}
as required in~\eqref{sub9}.

\bigskip

\emph{Step 2.} Let $(c_n)$ be a real-valued sequence tending to infinity. 
For each $n$, consider the event 
\begin{align*}
\mathcal{E}_n=\big\{\kappa_n<\infty\text{ and }\e_{i}\le c_{i} (i-\kappa_n)\text{ for all }i>\kappa_n\big\}\cup\{\kappa_n=\infty\}.
\end{align*}
The aim of this step is to show that
\begin{align}
\label{io}
\P(\mathcal{E}_n\text{ i.o.})=1,
\end{align} 
for which it suffices to prove that
\begin{align}
\label{eqone}
\lim_{n\to\infty}\P(\mathcal{E}_n)=1.
\end{align} 

Using Chebychev's inequality, \eqref{norm}, and monotone convergence theorem we have 
\begin{align*}
\P(\mathcal{E}_n)
&=\lim_{j\to\infty}\P\big(\kappa_n<\infty\text{ and }\e_{i}\le c_{i} (i-\kappa_n)\text{ for all } \kappa_n< i\le \kappa_n+j\big)+\P(\kappa_n=\infty)\\
&\ge \lim_{j\to\infty}\E\Big[\one\{\kappa_n<\infty\}\prod_{i=\kappa_n+1}^{\kappa_n+j-1}\one\big\{\e_{i}\le c_{i} (i-\kappa_n)\big\}\cdot
\Big(1-\P_{\mathcal{F}_{\kappa_n+j-1}}\big(\e_{\kappa_n+j}^2> c_{\kappa_n+j}^2 j^2\big)\Big)\Big]\\
&\phantom{aaall}+\P(\kappa_n=\infty)\\
&\ge \lim_{j\to\infty}\E\Big[\one\{\kappa_n<\infty\}\prod_{i=\kappa_n+1}^{\kappa_n+j-1}\one\big\{\e_{i}\le c_{i} (i-\kappa_n)\big\}\cdot\Big(1-\frac{1}{c_{\kappa_n+j}^2j^2}\Big)\Big]+\P(\kappa_n=\infty)\ge \cdots\\
&\ge\lim_{j\to\infty}\E\Big[\one\{\kappa_n<\infty\}\prod_{i=\kappa_n+1}^{\kappa_n+j}\Big(1-\frac{1}{c_{i}^2(i-\kappa_n)^2}\Big)\Big]+\P(\kappa_n=\infty)\\
&=\E\Big[\one\{\kappa_n<\infty\}\exp\Big\{\sum_{i=1}^{\infty}\log\Big(1-\frac{1}{c_{\kappa_n+i}^2i^2}\Big)\Big\}\Big]+\P(\kappa_n=\infty).
\end{align*}
Let $\hat c_n= \min \{c_i:i\ge n\}$ and observe that $\hat c_n\to\infty$. Hence for all sufficiently large $n$ we can estimate 
\begin{align*}
\log(1-x)\ge-x-x^2
\end{align*}
for all $x\in [0,\hat c_n^{-2}]$.
Since $\kappa_n+i\ge n$ for all $n$ and $i$ we have $c_{\kappa_n+i}\ge \hat c_n$. Hence 
for all sufficiently large $n$ we obtain 
\begin{align*}
\P(\mathcal{E}_n)
&\ge \E\Big[\one\{\kappa_n<\infty\}\exp\Big\{-\frac{1}{\hat c_{n}^2}\sum_{i=1}^{\infty}\frac{1}{i^2}-\frac{1}{\hat c_{n}^4}\sum_{i=1}^{\infty}\frac{1}{i^4}\Big\}\Big]+\P(\kappa_n=\infty)\to 1
\end{align*}
as $n\to\infty$, implying~\eqref{eqone}.
\bigskip

\emph{Step 3.} 
Let us show that on the event $\mathcal{E}$ there is $\nu$ (depending on $\omega\in \mathcal{E}$) such that for all $n\ge \kappa_{\nu}$
\begin{align}
\label{easy}
\e_n\le n
\end{align}
and
\begin{align}
\label{ind1}
\frac{\sigma_{n+1}}{\tau_{n}^{\alpha}}T_{n}^{\alpha-1}\le \delta q^{n-\kappa_{\nu}}.
\end{align}

Roughly speaking, we know that the first term of~\eqref{itit} dominates at time $\kappa_{\nu}$. Ignoring the remaining terms and taking into account~\eqref{st} we can write 
for $n>\kappa_{\nu}$ approximately
\begin{align*}
\frac{\sigma_{n+1}}{\tau_n^{\alpha}}T_n^{\alpha-1}\preceq \frac{\sigma_{n+1}}{\tau_n^{\alpha}}T_{n-1}^{\alpha-1}
=\frac{\sigma_{n+1}\tau_{n-1}^{\alpha}}{\sigma_n\tau_n^{\alpha}}\cdot \frac{\sigma_{n}}{\tau_{n-1}^{\alpha}}T_n^{\alpha-1}
\approx \lambda\cdot \frac{\sigma_{n}}{\tau_{n-1}^{\alpha}}T_n^{\alpha-1},
\end{align*}
which demonstrates eventual exponential decay of these terms as pinpointed in~\eqref{ind1}. 
\smallskip

To make this rigorous and prove the existence of $\nu$ satisfying~\eqref{easy} and~\eqref{ind1}, 
we define a sequence
\begin{align*}
c_n=\Big[\frac{\sigma_n}{\tau_{n-1}^{\alpha}}\Big]^{\frac{\gamma}{2(\alpha-1)}-\frac 1 2}, \quad n\in\N,
\end{align*}
and observe that $c_n\to \infty$ by
Lemma~\ref{omegazero} and since $\gamma<\alpha-1$. Further, it follows from~\eqref{d:lambdan} and~\eqref{st} that 
\begin{align}
\label{ls}
\limsup_{n\to\infty}\frac{\sigma_{n+1}\tau_{n-1}^{\alpha}}{\sigma_{n}\tau_{n}^{\alpha}}
=\limsup_{n\to\infty}\lambda_n=\lambda<1.
\end{align}

According to Steps 1 and 2, by Lemma~\ref{le1}, and by~\eqref{ls}  we can pick $\nu$ so that $\kappa_{\nu}<\infty$,  $\mathcal{E}_{\nu}$ occurs, \eqref{easy}  holds, and for all $n>\kappa_{\nu}$
\begin{align}
\label{leps}
\frac{\sigma_{n+1}\tau_{n-1}^{\alpha}}{\sigma_{n}\tau_{n}^{\alpha}}<\lambda+\e.
\end{align}

Let us now prove~\eqref{ind1} by induction in $n$. It is true for $n=\kappa_{\nu}$ by definition of $\kappa_{\nu}$. 
For $n>\kappa_{\nu}$ using~\eqref{itit}, \eqref{psibounds},  and the fact that $\mathcal{E}_{\nu}$ occurs we obtain 
\begin{align}
\frac{\sigma_{n+1}}{\tau_n^{\alpha}}T_n^{\alpha-1}
&\le \frac{\sigma_{n+1}}{\tau_n^{\alpha}}\Big(
T_{n-1}+2^{\alpha-1}T_{n-1}^{\alpha}\frac{\sigma_{n}}{\tau_{n-1}^{\alpha}} 
+\e_{n}\sqrt{\sigma_{n}P_{n-1}(1-P_{n-1})}
\Big)^{\alpha-1}\notag\\
&\le \frac{\sigma_{n+1}}{\tau_n^{\alpha}}
\Big(T_{n-1}+2^{\alpha-1}T_{n-1}^{\alpha}\frac{\sigma_n}{\tau_{n-1}^{\alpha}}
+c_n (n-\kappa_{\nu})\sqrt{2^{\alpha-1}T_{n-1}^{\alpha}\frac{\sigma_n}{\tau_{n-1}^{\alpha}}}\Big)^{\alpha-1}\notag\\
&\le \frac{\sigma_{n+1}}{\tau_n^{\alpha}}T_{n-1}^{\alpha-1}
\Big(1+2^{\alpha-1}T_{n-1}^{\alpha-1}\frac{\sigma_n}{\tau_{n-1}^{\alpha}}
+c_n (n-\kappa_{\nu})\sqrt{2^{\alpha-1}T_{n-1}^{\alpha-2}\frac{\sigma_n}{\tau_{n-1}^{\alpha}}}\Big)^{\alpha-1}.
\label{eps3}
\end{align}

Using~\eqref{ind1} for $n-1$ we have 
\begin{align}
\label{eps12}
\frac{\sigma_{n+1}}{\tau_{n}^{\alpha}}T_{n-1}^{\alpha-1}
=\delta q^{n-\kappa_{\nu}-1}\frac{\sigma_{n+1}\tau_{n-1}^{\alpha}}{\tau_{n}^{\alpha}\sigma_n}.
\end{align}

Using~\eqref{ind1} for $n-1$, $q<1$, and~\eqref{ddd} we have 
\begin{align}
\label{eps1}
2^{\alpha-1}T_{n-1}^{\alpha-1}\frac{\sigma_n}{\tau_{n-1}^{\alpha}}
\le 2^{\alpha-1}\delta q^{n-\kappa_{\nu}-1}\le \delta 2^{\alpha-1}<\e.
\end{align}

Using~\eqref{ind1} for $n-1$, $T_{n-1}\ge 1$, and $\gamma>\alpha-2$ we estimate 
\begin{align*}
T_{n-1}^{\alpha-2}\frac{\sigma_n}{\tau_{n-1}^{\alpha}}
<T_{n-1}^{\gamma}\frac{\sigma_n}{\tau_{n-1}^{\alpha}}
\le (\delta q^{n-\kappa_{\nu}-1})^{\frac{\gamma}{\alpha-1}} \Big[\frac{\sigma_n}{\tau_{n-1}^{\alpha}}\Big]^{1-\frac{\gamma}{\alpha-1}}
\end{align*}
and obtain by~\eqref{ddd2} and by the choice of $(c_n)$
\begin{align}
\label{eps2}
c_n(n-\kappa_{\nu})\sqrt{2^{\alpha-1}T_{n-1}^{\alpha-2}\frac{\sigma_n}{\tau_{n-1}^{\alpha}}}
&\le 2^{\frac{\alpha-1}{2}}(n-\kappa_{\nu})
(\delta q^{n-\kappa_{\nu}-1})^{\frac{\gamma}{2(\alpha-1)}} <\e.
\end{align}
Substituting~\eqref{eps12}, \eqref{eps1}, and~\eqref{eps2} into~\eqref{eps3} as well as using~\eqref{leps} and~\eqref{eeqq} we obtain 
\begin{align*}
\frac{\sigma_{n+1}}{\tau_n^{\alpha}}T_n^{\alpha-1}
&\le \delta q^{n-\kappa_{\nu}-1}\frac{\sigma_{n+1}\tau_{n-1}^{\alpha}}{\tau_n^{\alpha}\sigma_n}(1+2\e)
< \delta q^{n-\kappa_{\nu}-1}(\lambda+\e)(1+2\e)
<\delta q^{n-\kappa_{\nu}},
\end{align*}
completing the induction.
\bigskip

\emph{Step 4.} Finally, let us prove that $\P(\mathcal{E})=0$. 
For all $n\ge \kappa_{\nu}$ we have on the event $\mathcal{E}$ using~\eqref{itit}, \eqref{psibounds}, \eqref{easy},~\eqref{ind1}, and $T_{n-1}\ge 1$
\begin{align*}
T_n
&\le T_{n-1}\Big(1+2^{\alpha-1}T_{n-1}^{\alpha-1}\frac{\sigma_{n}}{\tau_{n-1}^{\alpha}}+n T_{n-1}^{-\frac 1 2}\sqrt{2^{\alpha-1}T_{n-1}^{\alpha-1}\frac{\sigma_{n}}{\tau_{n-1}^{\alpha}}}\Big)\\
&\le T_{n-1}\big(1+2^{\alpha-1}\delta q^{n-\kappa_{\nu}-1}+n\sqrt{2^{\alpha-1}\delta q^{n-\kappa_{\nu}-1}}\big)\le \cdots\\
&\le T_{\kappa_{\nu}}\prod_{i=1}^{n-\kappa_{\nu}}\Big(1+2^{\alpha-1}\delta q^{i-1}+(\kappa_{\nu}+i)\sqrt{2^{\alpha-1}\delta q^{i-1}}\Big)\\
&\le T_{\kappa_{\nu}}\prod_{i=1}^{\infty}\Big(1+2^{\alpha-1}\delta q^{i-1}+(\kappa_{\nu}+i)\sqrt{2^{\alpha-1}\delta q^{i-1}}\Big).
\end{align*}
As the infinite product converges, this implies that $T_n$ is bounded thus contradicting to the definition of the event $\mathcal{E}$. 
\end{proof}

\bigskip

\begin{proof}[Proof of Theorem~\ref{main1}]
If $(\rho_n)$ is bounded the statement follows from Proposition~\ref{p:mod1}. If $\rho_n\to\infty$ and $\lambda<1$ the result follows from Proposition~\ref{p:fast}. If $\rho_n\to\infty$ and $\lambda>1$
it suffices to apply Lemmas~\ref{omegazero} and~\ref{diverges}.
Finally, the last statement follows from Lemma~\ref{l:regular}.
\end{proof}
\smallskip



\section{Critical regime}

\label{s:cri}

The aim of this section is to prove Theorem~\ref{main3}.
For all $n$, denote 
\begin{align*}
\phi_n=\tau_n e^{-\theta \alpha^n}.
\end{align*}
It is easy to see that 
\begin{align}
\label{sec}
\lim_{n\to\infty}\alpha^{-n}\log \phi_n=0
\end{align}
and 
\begin{align}
\label{imply}
\frac{\tau_{n+1}}{\tau_n^{\alpha}}=\frac{\phi_{n+1}}{\phi_n^{\alpha}}, \quad n\in \N_0
\qquad\text{implying}\qquad
\sum_{n=0}^{\infty}\frac{\tau_{n+1}}{\tau_n^{\alpha}}<\infty
\quad\Leftrightarrow\quad
\sum_{n=0}^{\infty}\frac{\tau_{n+1}}{\tau_n^{\alpha}}<\infty.
\end{align}
In the case when the above series diverge we will show that no monopoly occurs using the same method as in Section~\ref{s:nomo}. 
Otherwise, if the series converge, we will explicitly construct two non-trivial events such that on one of them 
we have monopoly and on the other one we don't. 

\begin{lemma} 
\label{l:lambda}
If 
\begin{align*}
\sum_{n=0}^{\infty}\frac{\phi_{n+1}}{\phi_n^{\alpha}}<\infty
\end{align*}
then $(\phi_n)$ is unbounded.
\end{lemma}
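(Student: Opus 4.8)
The plan is to argue by contradiction: suppose $(\phi_n)$ is bounded, say $\phi_n\le C$ for all $n$, and derive from this that the series $\sum_n \phi_{n+1}/\phi_n^\alpha$ diverges. The intuition is that $\phi_n=\tau_n e^{-\theta\alpha^n}$ is exactly the ``sub-exponential correction'' to the leading growth of $(\tau_n)$, and by~\eqref{sec} we know $\alpha^{-n}\log\phi_n\to 0$, i.e.\ $\log\phi_n=o(\alpha^n)$. If in addition $(\phi_n)$ is bounded above, then $\phi_n$ cannot decay too fast either, and the ratios $\phi_{n+1}/\phi_n^\alpha$ should be ``not summable''.

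More concretely, first I would take logarithms. Writing $\ell_n=\log\phi_n$, the summability of $\sum_n\phi_{n+1}/\phi_n^\alpha$ forces $\phi_{n+1}/\phi_n^\alpha\to 0$, hence $\ell_{n+1}-\alpha\ell_n\to-\infty$. Combined with the boundedness $\ell_n\le\log C$, this gives $\ell_{n+1}-\alpha\log C\le\ell_{n+1}-\alpha\ell_n\to-\infty$, so $\ell_{n+1}\to-\infty$, i.e.\ $\phi_n\to 0$. Now I would use~\eqref{sec}: since $\alpha^{-n}\ell_n\to 0$, for any fixed $\eta>0$ we have $\ell_n\ge-\eta\alpha^n$ for all large $n$, so $\phi_n\ge e^{-\eta\alpha^n}$ eventually. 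Then
\begin{align*}
\frac{\phi_{n+1}}{\phi_n^\alpha}\ge \frac{\phi_{n+1}}{C^\alpha}\ge \frac{e^{-\eta\alpha^{n+1}}}{C^\alpha}
\end{align*}
is NOT obviously summable or not, so this crude bound alone is insufficient — one needs a genuine lower bound on $\phi_{n+1}$ that beats the decay, which is the crux.

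The cleaner route, which I expect to be the actual argument, is to telescope. From $\phi_n\to 0$ and $\phi_n\le C$ one estimates, for $m<n$,
\begin{align*}
\log\frac{\phi_n}{\phi_m}=\sum_{k=m}^{n-1}\log\frac{\phi_{k+1}}{\phi_k}
=\sum_{k=m}^{n-1}\Big(\log\frac{\phi_{k+1}}{\phi_k^\alpha}+(\alpha-1)\log\phi_k\Big),
\end{align*}
and since $\phi_k\le C$ the term $(\alpha-1)\log\phi_k\le(\alpha-1)\log C$ is bounded above, while convergence of $\sum_k\phi_{k+1}/\phi_k^\alpha$ means its terms go to zero so that $\log(\phi_{k+1}/\phi_k^\alpha)\to-\infty$; one shows the partial sums $\sum_{k=m}^{n-1}\log(\phi_{k+1}/\phi_k^\alpha)$ cannot be too negative relative to $\alpha^n$ using~\eqref{sec}, forcing $\log(\phi_n/\phi_m)$ to decay only sub-exponentially, contradicting a quantitative consequence of summability that would force geometric-type decay of $\phi_n$. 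The main obstacle is precisely balancing these two competing rates: the summability hypothesis wants the ratios $\phi_{k+1}/\phi_k^\alpha$ small, which (given the $\alpha$-th power) wants $\phi_k$ not small, i.e.\ pushes $\phi_k$ up — so a boundedness cap on $\phi_k$ from above is exactly the wrong sign and produces the contradiction, but making this precise requires carefully choosing the scales $m,n$ (e.g.\ $n$ growing while $m$ fixed, or $m=m(n)$) so that the $o(\alpha^n)$ error in~\eqref{sec} does not swallow the main term. I would allocate most of the effort to getting that choice right.
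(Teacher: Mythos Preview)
Your proposal identifies the right ingredients --- convergence of the series forces $\phi_{n+1}/\phi_n^{\alpha}\to 0$, boundedness caps $\phi_n$ from above, and the contradiction must come from~\eqref{sec} --- but you never close the argument, and the telescoping identity you write down is the wrong one. The additive decomposition
\[
\log\phi_n-\log\phi_m=\sum_{k=m}^{n-1}\Big(\log\frac{\phi_{k+1}}{\phi_k^{\alpha}}+(\alpha-1)\log\phi_k\Big)
\]
mixes the unknown $\log\phi_k$ back into the right-hand side, so it does not isolate the $\alpha^n$ scale you need to contradict~\eqref{sec}. This is why you end up stuck ``balancing competing rates''.

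The paper's proof avoids all of this by iterating the inequality directly rather than telescoping. Since the terms of the series tend to zero, pick $\e>0$ with $c\,\e^{1/(\alpha-1)}<1$ (where $c$ is the assumed bound on $\phi_n$) and $k$ so large that $\phi_{n+1}\le \e\,\phi_n^{\alpha}$ for all $n\ge k$. Iterating this recursion gives
\[
\phi_n\le \e^{1+\alpha+\cdots+\alpha^{n-k-1}}\phi_k^{\alpha^{n-k}}
=\e^{\frac{\alpha^{n-k}-1}{\alpha-1}}\phi_k^{\alpha^{n-k}},
\]
whence
\[
\alpha^{-n}\log\phi_n\le \alpha^{-k}\log\big(\phi_k\,\e^{1/(\alpha-1)}\big)
\le \alpha^{-k}\log\big(c\,\e^{1/(\alpha-1)}\big)<0,
\]
contradicting~\eqref{sec}. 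The point you missed is that the recursion $\phi_{n+1}\le\e\phi_n^{\alpha}$ is linear in $\log\phi_n$ with multiplier $\alpha$, so solving it produces exactly the $\alpha^{n-k}$ factor needed to match the scale in~\eqref{sec}; no delicate choice of $m=m(n)$ is required.
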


\begin{proof} Suppose $(\phi_n)$ is bounded by some $c<\infty$. 
Let $\e>0$ be such that $c\e^{\frac{1}{\alpha-1}}<1$ and let
$k\in\N_0$ be such that for all $n\ge k$
\begin{align*}
\phi_{n+1}\le \e\phi_n^{\alpha},
\end{align*}
which is possible since the series converges. 
Iterating this inequality we obtain for all $n\ge k$
\begin{align*}
\phi_n\le \e^{1+\alpha+\cdots+\alpha^{n-k-1}}\phi_k^{\alpha^{n-k}}=\e^{\frac{\alpha^{n-k}-1}{\alpha-1}}\phi_k^{\alpha^{n-k}}.
\end{align*}
This implies 
\begin{align*}
\lim_{n\to\infty}\alpha^{-n}\log\phi_n
=\alpha^{-k}\log\big(\phi_k\e^{\frac{1}{\alpha-1}}\big)
\le\alpha^{-k}\log\big(c\e^{\frac{1}{\alpha-1}}\big)<0
\end{align*}
contradicting~\eqref{sec}. 
\end{proof}

\begin{prop} 
\label{p:cri1}
Suppose $\alpha>1$ and $\theta\in (0,\infty)$.  
Then $\P(\mathcal{M})<1$. Further, if 
\begin{align}
\label{s8}
\sum_{n=0}^{\infty}\frac{\phi_{n+1}}{\phi_n^{\alpha}}=\infty
\end{align}
then $\P(\mathcal{M})=0$. 
%
\end{prop}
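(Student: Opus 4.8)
I would prove the second (stronger‑looking) assertion first, since it is a quick consequence of Lemma~\ref{diverges}. As $\alpha>1$ and $\theta>0$, we have $\log\tau_n\sim\theta\alpha^n$, hence $\tau_n^{1-\alpha}=e^{(1-\alpha)\log\tau_n}$ decays doubly exponentially and $\sum_n\tau_n^{1-\alpha}<\infty$. Using $\sigma_{n+1}=\tau_{n+1}-\tau_n$ together with the identity $\tau_{n+1}/\tau_n^\alpha=\phi_{n+1}/\phi_n^\alpha$ from~\eqref{imply}, we get $\phi_{n+1}/\phi_n^\alpha=\tau_n^{1-\alpha}+\sigma_{n+1}/\tau_n^\alpha$, so \eqref{s8} holds if and only if $\sum_n\sigma_{n+1}/\tau_n^\alpha=\infty$; Lemma~\ref{diverges} then gives $\P(\mathcal{M})=0$, in particular $\P(\mathcal{M})<1$.

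For the inequality $\P(\mathcal{M})<1$ in general, we may assume (by the above) that $\sum_n\phi_{n+1}/\phi_n^\alpha<\infty$, hence $\sum_n\sigma_{n+1}/\tau_n^\alpha<\infty$, and then by Lemma~\ref{l:lambda} the sequence $(\phi_n)$ is unbounded. Since $(T_n)$ and $(\tau_n-T_n)$ are nondecreasing and integer‑valued, $\mathcal{M}=\{(T_n)\text{ bounded}\}\cup\{(\tau_n-T_n)\text{ bounded}\}$, so it suffices to produce an event of positive probability on which $\Theta_n\to0$ (forcing $\tau_n-T_n\to\infty$) while $B_n\ge1$ for infinitely many $n$ (forcing $T_n\to\infty$). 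The heuristic recursion $T_{n+1}\approx\frac{\sigma_{n+1}}{\tau_n^\alpha}T_n^\alpha=\frac{\phi_{n+1}}{\phi_n^\alpha}T_n^\alpha$, valid on $\{\Theta_n\to0\}$ by~\eqref{itit222}, \eqref{psibounds} and $\sigma_{n+1}\sim\tau_{n+1}$, has $T_n\asymp\phi_n$ as its self‑consistent nontrivial solution, which explains why the unboundedness of $(\phi_n)$ is the relevant hypothesis: it is exactly what keeps this solution going to infinity. Accordingly, I would fix constants $0<c_1<c_2$ and a large $n_0$, and let $\mathcal{B}$ be the event that $c_1\phi_n^\ast\le T_n\le c_2\phi_n^\ast$ for all $n\ge n_0$, where $\phi_n^\ast=\max_{k\le n}\phi_k\uparrow\infty$. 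On $\mathcal{B}$ one has $\Theta_n=T_n/\tau_n\le c_2\phi_n^\ast e^{-\theta\alpha^n}\to0$ by~\eqref{sec}; and by~\eqref{psibounds}, on $\mathcal{B}$ the conditional probabilities $\P_{\mathcal{F}_{n-1}}(B_n\ge1)=1-(1-\psi(\Theta_{n-1}))^{\sigma_n}$ are bounded below by a fixed positive multiple of $\min\{1,\phi_n\}$ (because $\sigma_n\psi(\Theta_{n-1})\gtrsim\sigma_n(\phi_{n-1}^\ast/\tau_{n-1})^\alpha\gtrsim\phi_n$ there), and $\sum_n\min\{1,\phi_n\}=\infty$ since $(\phi_n)$ is unbounded; so by L\'evy's extension of the Borel--Cantelli lemma $B_n\ge1$ infinitely often on $\mathcal{B}$. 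Hence $\mathcal{B}\subset\mathcal{M}^c$, and it remains only to show $\P(\mathcal{B})>0$.

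The main obstacle is precisely this last point. I would split $\{n\ge n_0\}$ into consecutive blocks and bound, block by block and conditionally on the history, the probability that $(T_n)$ is not pushed out of the corridor $[c_1\phi_n^\ast,c_2\phi_n^\ast]$; on the corridor the increments $B_n$ have conditional mean $\sigma_n\psi(\Theta_{n-1})\asymp\phi_n(\phi_{n-1}^\ast/\phi_{n-1})^\alpha$ and comparable conditional variance, so one needs the accumulated relative fluctuations along a block to be small and the resulting infinite product of block estimates to stay positive. The genuinely delicate features are that $(\phi_n)$ need not be monotone — at indices where $\phi_n$ sits far below $\phi_n^\ast$ the conditional mean of $B_n$ is large and one must prevent $(T_n)$ from overshooting $c_2\phi_n^\ast$ — and that these estimates must hold uniformly in the conditioning. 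Everything else (the deterministic series manipulations, the choice of $n_0$, and the invocations of Lemmas~\ref{diverges} and~\ref{l:lambda}) is routine by comparison.
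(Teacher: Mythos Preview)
Your treatment of the divergent case is correct and matches the paper's.

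For the convergent case there is a genuine gap: you reduce everything to showing $\P(\mathcal{B})>0$ for the two-sided corridor $c_1\phi_n^\ast\le T_n\le c_2\phi_n^\ast$, call this the ``main obstacle'', and stop. The overshooting you flag is a real problem with this formulation --- on the corridor the conditional mean of $B_n$ is of order $\phi_n(\phi_{n-1}^\ast/\phi_{n-1})^\alpha$, which need not be $O(\phi_n^\ast)$ under the hypotheses --- and there is a second problem you do not mention: since $(\phi_n)$ may grow arbitrarily slowly (all we have is $\alpha^{-n}\log\phi_n\to0$), bounding $|\e_{n+1}|$ by $n$ gives only $T_{n+1}\gtrsim\phi_{n+1}-n\sqrt{\phi_{n+1}}$ at a new maximum, which need not clear $c_1\phi_{n+1}$. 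The bare running maximum $\phi_n^\ast$ is simply too tight a target on both sides.

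The paper avoids both difficulties. It replaces $\phi_n^\ast$ by the weighted running maximum $\chi_n=\max_{k\le n}k^\gamma\phi_k$ with $\gamma>\frac{2}{\alpha-1}$, so that $\chi_n/\phi_n\ge n^\gamma$ automatically and the one-step lower bound becomes $\sigma_{n+1}\psi(\Theta_n)\gtrsim\phi_{n+1}n^{\alpha\gamma}$, which beats the $n$-level noise by a polynomial margin. It also drops the upper bound entirely: the event is $\{T_m\in[\chi_m,\tau_m-\chi_m]\}\cap\{|\e_{n+1}|\le n\text{ for all }n\ge m\}$, its positivity is elementary, and on it a purely deterministic induction yields $T_n\ge\chi_n$. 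The recursion for $\hat T_n=\tau_n-T_n$ has the same form with $\hat\e_n=-\e_n$, so symmetry gives $\hat T_n\ge\chi_n$ on the \emph{same} event, and both bins go to infinity with no corridor needed. (Your Borel--Cantelli step for $B_n\ge1$ infinitely often is then also superfluous: $T_n\ge\chi_n\to\infty$ already gives $T_n\to\infty$.)
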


\begin{proof} 
First, suppose that the series~\eqref{s8} diverges. By~\eqref{sec} we have 
\begin{align*}
\frac{\sigma_{n+1}}{\tau_n^{\alpha}}
=\frac{\tau_{n+1}-\tau_n}{\tau_n}
=\frac{\phi_{n+1}-\phi_n e^{-\theta(\alpha-1)\alpha^n}}{\phi_n^{\alpha}}\sim\frac{\phi_{n+1}}{\phi_n^{\alpha}}
\end{align*}
as $n\to\infty$. Hence by~\eqref{s8} we have
\begin{align}
\label{div}
\sum_{n=0}^{\infty}\frac{\sigma_{n+1}}{\tau_n^{\alpha}}=\infty
\end{align}
and $\P(\mathcal{M})=0$ by Lemma~\ref{diverges}.
\medskip

It remains to consider the case when the series on the left-hand side of~\eqref{s8} converges and show that $\P(\mathcal{M})<1$. 
To do so we will construct an event $\mathcal{E}$ such that $\P(\mathcal{E})>0$ and both $T_n$ and $\hat T_n=\tau_n-T_n$ tend to infinity on 
$\mathcal{E}$.
\smallskip

Let $\gamma>\frac{2}{\alpha-1}$ and, for all $n\in\N$, denote 
\begin{align}
\label{defchi}
\chi_n=\max_{1\le k\le n} k^{\gamma}\phi_k.
\end{align}
By Lemma~\ref{l:lambda} $\chi_n\to\infty$. 
Let $\beta\in (\frac{2+\gamma}{\alpha},\gamma)$. 
Choose $m\in\N$
large enough so that
\begin{align}
\label{he10}
\tau_m-2\chi_m>1,
\end{align}
\begin{align}
\label{he9}
T_0\le \tau_m-\chi_m
\qquad\text{and}\qquad
T_0+\tau_m-\tau_0\ge \chi_m,
\end{align}
as well as  
\begin{align}
\label{he1}
\big(\phi_{n+1}-\phi_ne^{-\theta(\alpha-1)\alpha^{n}}\big)n^{\alpha\gamma}&> \phi_{n+1}n^{\alpha\beta},\\
\label{he1b}
\chi_12^{-\gamma} n^{\alpha\beta-\gamma}&>n^2,\\
\label{he6}
n^{\alpha\beta}\Big(1-\Big[\frac{2^{\gamma}n^{2-\alpha\beta+\gamma}}{\chi_1}\Big]^{1/2}\Big)
&> (n+1)^{\gamma},
\end{align}
for all $n\ge m$. Indeed, \eqref{he10} and \eqref{he9} are possible by~\eqref{sec} since $(\tau_n)$ grows faster than $(\chi_n)$, \eqref{he1} is possible since $\gamma>\beta$ and again  by~\eqref{sec}, \eqref{he1b} is possible since $\alpha\beta-\gamma>2$, and~\eqref{he6} is possible as $2-\alpha\beta+\gamma<0$ and $\alpha\beta>\gamma$. 
Observe that the event
\begin{align*}
\mathcal{E}_0=\big\{T_m\in [\chi_m, \tau_m-\chi_m]\big\} 
\end{align*}
occurs with positive probability by~\eqref{he10} and~\eqref{he9} since $T_m$ takes each value between $T_0$ and $T_0+\tau_m-\tau_0$ with positive probability. 
Let
\begin{align*}
\mathcal{E}=\mathcal{E}_0\cap\big\{|\e_{n+1}|\le n\text{ for all }n\ge m\big\}.
\end{align*}
By Chebychev's inequality and using~\eqref{norm} we obtain 
\begin{align*}
\P(\mathcal{E})
&=\lim_{n\to\infty}\E\Big[\one\{\mathcal{E}_0\}\one\big\{|\e_{i+1}|\le i\text{ for all }m\le i<n\big\}\big(1-\P_{\mathcal{F}_{n}}(\e_{n+1}^2>n^2)\big)\Big]\notag\\
&\ge \lim_{n\to\infty}\E\Big[\one\{\mathcal{E}_0\}\one\big\{|\e_{i+1}|\le i\text{ for all }1\le i< n\big\}\Big(1-\frac{1}{n^2}\Big)\Big]
\ge\cdots\notag\\
&\ge \P(\mathcal{E}_0)\prod_{n=m}^{\infty}\Big(1-\frac{1}{n^2}\Big)>0
\end{align*}
since the infinite product converges.
\smallskip

Due to the symmetry of the relations
\begin{align}
\label{he4}
T_{n+1}=T_n+\sigma_{n+1}\psi(\Theta_n)+\e_{n+1}\sqrt{\sigma_{n+1}\psi(\Theta_n)(1-\psi(\Theta_n))}
\end{align}
and
\begin{align*}
\hat T_{n+1}=\hat T_n+\sigma_{n+1}\psi(\hat \Theta_n)+\hat \e_{n+1}\sqrt{\sigma_{n+1}\psi(\hat \Theta_n)(1-\psi(\hat\Theta_n))},
\end{align*}
where $\hat\Theta_n=\hat T_n/\tau_n$ and $\hat \e_n=-\e_n$, and since $\hat T_m$ satisfies the condition 
$\hat T_m\in [\chi_m, \tau_m-\chi_m]$ in the same way as $T_m$
on $\mathcal{E}_0$, we only need to prove that $T_n\to\infty$ on $\mathcal{E}$. 
\smallskip

Since $\chi_n\to\infty$, it suffices to show that $T_n\ge \chi_n$ on $\mathcal{E}$ for all $n\ge m$, which we prove by induction. 
For $n=m$ this follows from the definition of $\mathcal{E}_0$. 
Let $n\ge m$. If $\chi_{n+1}=\chi_n$ then we have $T_{n+1}\ge T_n\ge \chi_n=\chi_{n+1}$ as required. 
If $\chi_{n+1}>\chi_n$ then 
\begin{align}
\label{222}
\phi_{n+1}>\frac{\chi_n}{(n+1)^\gamma}\ge \frac{\chi_1}{2^{\gamma}n^{\gamma}}.
\end{align}
It follows from~\eqref{he4} that on $\mathcal{E}$
\begin{align}
T_{n+1}
&\ge \sigma_{n+1}\psi(\Theta_n) 
-n\sqrt{\sigma_{n+1}\psi(\Theta_n)}.
\label{he2}
\end{align}
Using~\eqref{psibounds}, the induction hypothesis $T_n\ge \chi_n$, $\chi_n\ge n^{\gamma}\phi_n$
following from the definition~\eqref{defchi}, and~\eqref{he1} we obtain 
\begin{align}
\sigma_{n+1}\psi(\Theta_n)
&\ge \sigma_{n+1}\Theta_n^{\alpha}
= \frac{\tau_{n+1}-\tau_n}{\tau_n^{\alpha}}\,T_n^{\alpha}
\ge\frac{\phi_{n+1}e^{\theta\alpha^{n+1}}-\phi_ne^{\theta\alpha^{n}}}{\phi_n^{\alpha}e^{\theta\alpha^{n+1}}}\chi_n^{\alpha}\notag\\
&\ge \big(\phi_{n+1}-\phi_ne^{-\theta(\alpha-1)\alpha^{n}}\big)n^{\alpha\gamma}>\phi_{n+1}n^{\alpha\beta}.
\label{he3}
\end{align}
Since the function $x\mapsto x-n\sqrt x$ is increasing on $[n^2,\infty)$ 
\begin{align*}
\phi_{n+1}n^{\alpha\beta}\ge \chi_12^{-\gamma} n^{\alpha\beta-\gamma}>n^2
\end{align*}
by~\eqref{222} and \eqref{he1b}, we obtain by~\eqref{he2}, \eqref{he3}, \eqref{222}, and~\eqref{he6} that 
\begin{align*}
T_{n+1}
&\ge n^{\alpha\beta}\phi_{n+1}-n^{\frac{\alpha\beta}{2}+1}\sqrt{\phi_{n+1}}
\ge n^{\alpha\beta}\phi_{n+1}\Big(1-\Big[\frac{n^{2-\alpha\beta}}{\phi_{n+1}}\Big]^{1/2}\Big)\\
&\ge n^{\alpha\beta}\phi_{n+1}\Big(1-\Big[\frac{2^{\gamma}n^{2-\alpha\beta+\gamma}}{\chi_1 }\Big]^{\frac 1 2}\Big)
\ge (n+1)^{\gamma}\phi_{n+1}=\chi_{n+1}
\end{align*}
as required. 
\end{proof}

\begin{prop} 
\label{p:cri2}
Suppose $\alpha>1$ and $\theta\in (0,\infty)$. If
\begin{align}
\label{coco}
\sum_{n=0}^{\infty}\frac{\phi_{n+1}}{\phi_n^{\alpha}}<\infty
\end{align}
then $\P(\mathcal{M})>0$.
\end{prop}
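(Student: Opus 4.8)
The plan is to exhibit a single explicit positive-probability event on which monopoly occurs, rather than to control the whole configuration space (as is needed for $\P(\mathcal{M})=1$ in the subcritical regime, or for $\P(\mathcal{M})<1$ in Proposition~\ref{p:cri1}). The natural candidate is the event
\begin{align*}
\mathcal{N}=\{B_n=0\text{ for all }n\in\N\}
\end{align*}
that the first bin never receives a ball; since $\mathcal{N}\subset\{B_n=0\text{ eventually for all }n\}\subset\mathcal{M}$, it suffices to prove $\P(\mathcal{N})>0$.

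First I would write $\P(\mathcal{N})$ as an infinite product. On $\{B_1=\dots=B_n=0\}$ one has $T_n=T_0$, hence $\Theta_n=T_0/\tau_n$ and $P_n=\psi(T_0/\tau_n)$; since $B_{n+1}$ is conditionally $\mathrm{Bin}(\sigma_{n+1},P_n)$ given $\mathcal{F}_n$, the tower property yields, by induction on $N$,
\begin{align*}
\P(B_1=\dots=B_N=0)=\prod_{n=0}^{N-1}\big(1-\psi(T_0/\tau_n)\big)^{\sigma_{n+1}},
\end{align*}
and letting $N\to\infty$,
\begin{align*}
\P(\mathcal{N})=\prod_{n=0}^{\infty}\big(1-\psi(T_0/\tau_n)\big)^{\sigma_{n+1}}.
\end{align*}
Every factor lies in $(0,1)$ because $0<T_0<\tau_0\le\tau_n$.

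Second I would check that this product is strictly positive. Taking logarithms, $\P(\mathcal{N})>0$ is equivalent to $\sum_{n\ge0}\sigma_{n+1}\big(-\log(1-\psi(T_0/\tau_n))\big)<\infty$; since $\tau_n\to\infty$ (indeed $\log\tau_n\sim\theta\alpha^n$ in the critical regime) we have $\psi(T_0/\tau_n)\to0$, so $-\log(1-\psi(T_0/\tau_n))\le 2\psi(T_0/\tau_n)$ for all large $n$, and it is enough that $\sum_{n\ge0}\sigma_{n+1}\psi(T_0/\tau_n)<\infty$. Using the bound $\psi(x)\le 2^{\alpha-1}x^{\alpha}$ from~\eqref{psibounds}, then $\sigma_{n+1}=\tau_{n+1}-\tau_n\le\tau_{n+1}$, and finally the identity $\tau_{n+1}/\tau_n^{\alpha}=\phi_{n+1}/\phi_n^{\alpha}$ from~\eqref{imply},
\begin{align*}
\sum_{n=0}^{\infty}\sigma_{n+1}\psi(T_0/\tau_n)
\le 2^{\alpha-1}T_0^{\alpha}\sum_{n=0}^{\infty}\frac{\sigma_{n+1}}{\tau_n^{\alpha}}
\le 2^{\alpha-1}T_0^{\alpha}\sum_{n=0}^{\infty}\frac{\tau_{n+1}}{\tau_n^{\alpha}}
=2^{\alpha-1}T_0^{\alpha}\sum_{n=0}^{\infty}\frac{\phi_{n+1}}{\phi_n^{\alpha}}<\infty
\end{align*}
by the hypothesis~\eqref{coco}. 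Hence $\P(\mathcal{N})>0$, and therefore $\P(\mathcal{M})\ge\P(\mathcal{N})>0$.

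I do not expect a genuine obstacle in this direction: a positive lower bound for $\P(\mathcal{M})$ needs only one trajectory, and the convergence of $\sum\phi_{n+1}/\phi_n^{\alpha}$ is precisely what makes ``the first bin stays empty forever'' a positive-probability event. The only points requiring slight care are the conditioning argument for the product formula (routine, from the conditional binomial structure of $(B_n)$) and the elementary criterion that $\prod(1-a_n)$ with $a_n\to0$ is positive iff $\sum a_n<\infty$. If one prefers to mirror the proof of Proposition~\ref{p:cri1}, an alternative would be to start from a late time $m$ on the event $\{T_m=T_0\}\cap\{|\e_{n+1}|\le n \text{ for all } n\ge m\}$ and show by induction that $T_n$ stays bounded via~\eqref{itit}, but the direct computation above is considerably shorter.
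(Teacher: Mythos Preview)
Your argument is correct and considerably more direct than the paper's. The paper does not compute the probability of a single explicit trajectory; instead it works with the recursive bound
\[
\frac{T_{n+1}-T_n}{T_n^{\alpha}}\le 2^{\alpha-1}\frac{\phi_{n+1}}{\phi_n^{\alpha}}+\xi_n\e_{n+1},
\qquad \xi_n=T_n^{-\alpha}\sqrt{\sigma_{n+1}P_n(1-P_n)},
\]
chooses $m$ so that the tail $2^{\alpha-1}\sum_{n\ge m}\phi_{n+1}/\phi_n^{\alpha}$ is strictly below $\frac{1}{(\alpha-1)T_0^{\alpha-1}}$, and then looks at the event
\[
\mathcal{E}=\Big\{2^{\alpha-1}\sum_{n\ge m}\frac{\phi_{n+1}}{\phi_n^{\alpha}}<\frac{1}{(\alpha-1)T_m^{\alpha-1}}\Big\}\cap\Big\{\sum_{n\ge m}\xi_n\e_{n+1}\le 0\Big\}.
\]
The first condition is forced by $B_1=\cdots=B_m=0$ (so $T_m=T_0$), and the second has positive conditional probability since the martingale has conditional mean zero; on $\mathcal{E}$ one compares $\sum_{n\ge m}(T_{n+1}-T_n)/T_n^{\alpha}$ with $\int_{T_m}^{\infty}x^{-\alpha}\,dx$ to rule out $T_n\to\infty$, hence $B_n=0$ eventually.

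Your route bypasses the martingale step entirely by taking the single path $\{B_n=0\text{ for all }n\}$ and reducing positivity of the infinite product to the very hypothesis~\eqref{coco} via~\eqref{psibounds} and~\eqref{imply}. This is shorter and uses nothing beyond the conditional binomial structure. The paper's argument, on the other hand, is closer in spirit to the integral-comparison technique used for the subcritical regime (Proposition~\ref{p:mod1}) and yields a somewhat larger event in $\mathcal{M}$ (it tolerates fluctuations of $(B_n)$ after time $m$ provided the noise series stays nonpositive), but that extra generality is not needed for the stated conclusion $\P(\mathcal{M})>0$.
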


\begin{proof} 
For all $n\in\N_0$ it follows from~\eqref{psibounds}, \eqref{iteration1}, and $\sigma_{n+1}\le \tau_{n+1}$ that 
\begin{align}
T_{n+1}
&\le T_n+\tau_{n+1}2^{\alpha-1}\Theta_n^{\alpha}+\e_{n+1}\sqrt{\sigma_{n+1}P_n(1-P_n)}\notag\\
&\le T_n+2^{\alpha-1}T_n^{\alpha}\frac{\phi_{n+1}}{\phi_n^{\alpha}} 
+\e_{n+1}\sqrt{\sigma_{n+1}P_n(1-P_n)},
\label{itit7}
\end{align}
implying 
\begin{align}
\label{fdif17}
\frac{T_{n+1}-T_n}{T_n^{\alpha}}\le 2^{\alpha-1}\frac{\phi_{n+1}}{\phi_n^{\alpha}}+\xi_{n}\e_{n+1},
\end{align}
where 
\begin{align*}
\xi_{n}=T_n^{-\alpha}\sqrt{\sigma_{n+1}P_n(1-P_n)}.
\end{align*}

As the series~\eqref{coco} converges we can pick $m$ large enough so that 
\begin{align}
\label{m8}
2^{\alpha-1}\sum_{n=m}^{\infty}\frac{\phi_{n+1}}{\phi_n^{\alpha}}<\frac{1}{\alpha-1}\cdot \frac{1}{T_0^{\alpha-1}}.
\end{align}
Consider the event 
\begin{align*}
\mathcal{E}=\Big\{2^{\alpha-1}\sum_{n=m}^{\infty}\frac{\phi_{n+1}}{\phi_n^{\alpha}}<\frac{1}{\alpha-1}\cdot\frac{1}{T_m^{\alpha-1}}\text{ \rm and }
\sum_{n=m}^{\infty}\xi_{n}\e_{n+1} \le 0\Big\}.
\end{align*}
We have $\P(\mathcal{E})>0$ by~\eqref{m8}, as the first condition can be achieved by $B_1=\cdots=B_m=0$ occurring with positive probability, and since the conditional expectation of the second series given $\mathcal{F}_m$ is zero almost surely. Let us show that 
$\mathcal{E}\subset \mathcal{M}$.
Suppose $T_n\to\infty$ for some $\omega\in \mathcal{E}$. 
We have by~\eqref{fdif17} and the second condition defining the event $\mathcal{E}$
\begin{align*}
2^{\alpha-1}\sum_{n=m}^{\infty}\frac{\phi_{n+1}}{\phi_n^{\alpha}}\ge \sum_{n=m}^{\infty}\frac{T_{n+1}-T_n}{T_n^{\alpha}}
\ge \int_{T_m}^{\infty}\frac{dx}{x^{\alpha}}=\frac{1}{\alpha-1}\cdot\frac{1}{T_m^{\alpha-1}}
\end{align*}
leading to a contradiction with the first condition defining the event $\mathcal{E}$.
\end{proof}
\bigskip

\begin{proof}[Proof of Theorem~\ref{main3}]  
The statement follows from~\eqref{imply} and Propositions~\ref{p:cri1} and~\ref{p:cri2}.
\end{proof}

\bigskip

\end{document}